
\documentclass[11pt]{amsart}
\usepackage{verbatim}


\hyphenation{arch-i-med-e-an}


\newtheorem{theorem}{Theorem}[section]
\newtheorem{proposition}[theorem]{Proposition}

\newtheorem{lemma}[theorem]{Lemma}
\newtheorem{claim}[theorem]{Claim}

\newtheorem{definition}[theorem]{Definition}

\newtheorem{conjecture}[theorem]{Conjecture}

\theoremstyle{plain}
\numberwithin{equation}{theorem}

\theoremstyle{remark}
\newtheorem{remark}[theorem]{Remark}

\newcommand{\oQ}{{\overline{\mathbb Q}}}

\newcommand{\Z}{{\mathbb Z}}

\newcommand{\cV}{{\mathcal V}}
\newcommand{\cW}{{\mathcal W}}
\newcommand{\cK}{{\mathcal K}}

\newcommand{\fO}{\mathfrak o}
\newcommand{\cC}{\mathfrak C}
\newcommand{\fo}{\mathfrak o}

\newcommand{\Kbar}{\overline{K}}

\DeclareMathOperator{\N}{\mathbb{N}}

\DeclareMathOperator{\trdeg}{trdeg}

\DeclareMathOperator{\car}{char}

\DeclareMathOperator{\End}{End}

\DeclareMathOperator{\Supp}{Supp}

\DeclareMathOperator{\bN}{\mathbb{N}}

\DeclareMathOperator{\PGL}{PGL}

\newcommand{\bP}{{\mathbb P}}
\newcommand{\bZ}{{\mathbb Z}}
\newcommand{\bG}{{\mathbb G}}

\newcommand{\bC}{{\mathbb C}}

\newcommand{\bA}{{\mathbb A}}
\newcommand{\bQ}{{\mathbb Q}}

\newcommand{\lra}{\longrightarrow}
\newcommand{\cO}{\mathcal{O}}

\newcommand{\cN}{\mathcal{N}}
\newcommand{\cU}{\mathcal{U}}

\newcommand{\cS}{\mathcal{S}}

\newcommand{\bK}{\overline{K}}

\newcommand{\Cp}{\bC_p}
\newcommand{\PCp}{\bP^1(\bC_p)}

\newcommand{\Dbar}{\overline{D}}

\DeclareMathOperator{\lcm}{lcm}

\title[Dynamical Mordell-Lang]
      {The Dynamical Mordell-Lang Conjecture}

\author[Benedetto, Ghioca, Kurlberg, Tucker]
{Robert L.  Benedetto, Dragos Ghioca, P\"{a}r Kurlberg,
  and Thomas J. Tucker}

\date{February 4, 2009}

\subjclass[2000]{Primary: 14G25.  Secondary: 37F10}

\keywords{$p$-adic dynamics, Mordell-Lang conjecture}

\address{Department of Mathematics \& Computer Science \\
        Amherst College \\
        Amherst, MA 01002 \\
        USA}
\email{rlb@cs.amherst.edu}

\address{Department of Mathematics \& Computer Science\\
University of Lethbridge\\
Lethbridge, AB T1K 3M4\\
Canada}
\email{dragos.ghioca@uleth.ca}

\address{
Department of Mathematics\\
KTH\\
SE-100 44 Stockholm\\
Sweden}
\email{kurlberg@math.kth.se}

\address{Department of Mathematics\\
University of Rochester\\
Rochester, NY 14627\\ 
USA}
\email{ttucker@math.rochester.edu}


\begin{document}

\begin{abstract}
  We prove a special case of a dynamical analogue of the classical
  Mordell-Lang conjecture. In particular, let $\varphi$ be a rational
  function with no superattracting periodic points other than
  exceptional points.  If the coefficients of $\varphi$ are algebraic, we
  show that the orbit of a point outside the union of proper
  preperiodic subvarieties of $(\bP^1)^g$ has only finite intersection
  with any curve contained in $(\bP^1)^g$. We also show that our result holds for indecomposable polynomials $\varphi$ with coefficients in $\bC$. 
 Our proof uses results
  from $p$-adic dynamics together with an integrality argument. The extension to polynomials defined over $\bC$ uses the method of specializations coupled with some new results of Medvedev and Scanlon for describing the periodic plane curves under the action of $(\varphi,\varphi)$ on $\bA^2$.
\end{abstract}

\maketitle

\section{Introduction}
Let $X$ be a variety over the complex numbers $\bC$, let $\Phi: X \lra
X$ be a morphism, and let $V$ be a subvariety of
$X$.  For any integer $m\geq 0$, denote by $\Phi^m$ the
$m^{\text{th}}$ iterate $\Phi\circ\cdots\circ\Phi$.
If $\alpha \in X(\bC)$ has the property that there is some
integer $\ell\geq 0$ such that $\Phi^\ell(\alpha) \in W(\bC)$,
where $W$ is a periodic subvariety of $V$, then there are infinitely
many integers
$n\geq 0$ such that $\Phi^n(\alpha) \in V$.  More precisely, if
$k\geq 1$ is the period of $W$ (the smallest positive integer $m$ for which
$\Phi^m(W) = W$), then $\Phi^{nk + \ell}(\alpha) \in W(\bC)
\subseteq V(\bC)$ for all integers $n\geq 0$.  It is natural
then to pose the following question: given
$\alpha\in X(\bC)$, if there are infinitely many
integers $m\geq 0$ such that $\Phi^m(\alpha) \in V(\bC)$,
are there necessarily integers $k\geq 1$ and $\ell\geq 0$
such that $\Phi^{nk + \ell}(\alpha) \in V(\bC)$ for all
integers $n\geq 0$?

This question has a positive answer in many special cases.  When $X$
is a semiabelian variety and $\Phi$ is a multiplication-by-$m$ map,
this follows from Faltings' \cite{Faltings} and Vojta's proof
\cite{V1} of the Mordell-Lang conjecture in characteristic $0$.  More
generally, the question has a positive answer when $\Phi$ is any
endomorphism of a semiabelian variety (see \cite{GT-newlog}).  Denis
\cite{Denis-dynamical} treated the general question under the
additional hypothesis that the integers $n$ for which $\Phi^n(\alpha)
\in V(\bC)$ are sufficiently dense in the set of all positive
integers; he also obtained results for automorphisms of projective
space without using this additional hypothesis.  Bell \cite{Bell}
later solved the problem completely in the case of automorphisms of
affine space.  In \cite{GT-newlog}, a general framework for attacking the
problem was developed and the following conjecture was made.

\begin{conjecture}
\label{dynamical M-L}
Let $f_1,\dots,f_g\in\bC[t]$ be polynomials, let $\Phi$ be their
action coordinatewise on $\bA^g$, let $\cO_\Phi((x_1,\dots,x_g))$
denote the $\Phi$-orbit of
the point $(x_1,\dots,x_g)\in \bA^g(\bC)$, and let $V$ be a subvariety
of $\bA^g$.  Then $V$ intersects $\cO_\Phi((x_1,\dots,x_g))$ in at
most a finite union of orbits of the form
$\cO_{\Phi^k}(\Phi^{\ell}(x_1,\dots,x_g))$, for some nonnegative
integers $k$ and $\ell$.
\end{conjecture}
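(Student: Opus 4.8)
The plan is to convert the statement into one about the zeros of $p$-adic analytic functions, apply a Strassmann-type finiteness theorem, and use an integrality argument to find a place at which the relevant local dynamics is linearizable; for coefficients in $\bC$ I would append a specialization step at the end. I would first make a few reductions: take $V$ irreducible, assume the orbit $\cO_\Phi(\mathbf{x})$ of $\mathbf{x}=(x_1,\dots,x_g)$ is infinite (if it is finite it is eventually periodic, so $S:=\{n\ge0:\Phi^n(\mathbf{x})\in V\}$ is already a finite union of complete arithmetic progressions together with a finite set), and assume $\deg f_i\ge2$ for every $i$ (the cases with a constant or linear coordinate are trivial or reduce to known Mordell-Lang statements for \'etale maps). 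It then suffices to prove that $S$ is a finite union of complete arithmetic progressions plus a finite set.

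Assuming the $f_i$ and the $x_i$ are algebraic, I would work over a number field $K$ and look for a place $v$ of $K$, unramified of residue characteristic $p>\deg f_i$ for all $i$, such that every $f_i$ has good reduction at $v$, the orbit $\{\Phi^n(\mathbf{x})\}$ is $v$-adically bounded (it never approaches $\infty$ in any coordinate), and --- taking $N$ to be a common period of the eventually periodic reduction of the orbit, so that $\Phi^N$ fixes each residue polydisc visited by the tail of the orbit --- the return map $\Phi^N$ on each such polydisc $D_M\subseteq(\bP^1(\bC_v))^g$ is, coordinate by coordinate, either a strict contraction toward an attracting fixed point or, after replacing $N$ by a suitable multiple, analytically conjugate to $z\mapsto\lambda z$ with $\lambda$ a principal unit. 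For polynomials with good reduction this is almost forced: outside the finitely many places dividing a denominator of the data the whole orbit lies in the $v$-integral locus, hence is bounded and stays away from $\infty$ (the unique superattracting fixed point, which is exceptional), so the one real constraint is that the finite residue cycle reached by the orbit not pass through a superattracting periodic point of some $\bar f_i$.

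Given such a $v$ and a residue $M$, I would linearize $\Phi^N$ on $D_M$ and write $\Phi^{Nn+M}(\mathbf{x})=\psi^{-1}\bigl(\lambda_1^n w_1,\dots,\lambda_g^n w_g\bigr)$ in linearizing coordinates. Each defining polynomial $h_j$ of $V$ then pulls back to an analytic function $\tilde h_j:=h_j\circ\psi^{-1}$, and $n\mapsto\tilde h_j\bigl(\lambda_1^n w_1,\dots,\lambda_g^n w_g\bigr)$ is a convergent $p$-adic power series in the interpolated variable $n\in\bZ_p$ --- this is where one uses that $n\mapsto\lambda_i^n$ extends analytically to $\bZ_p$ when $\lambda_i$ is a principal unit, while coordinates with $|\lambda_i|<1$ contribute terms tending rapidly to $0$. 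If $Nn+M\in S$ for infinitely many $n$, each of these power series has infinitely many zeros in $\bZ_p$, so by Strassmann's theorem each vanishes identically; hence $\Phi^{Nn+M}(\mathbf{x})\in V$ for all $n\ge0$, i.e. the entire progression $M+N\bZ_{\ge0}$ lies in $S$. Ranging over the finitely many residues modulo $N$ that occur infinitely often in $S$ --- the others contribute only finitely many elements --- I would conclude that $S$ is a finite union of complete arithmetic progressions plus a finite set, which is the assertion that $\cO_\Phi(\mathbf{x})\cap V$ is a finite union of orbits $\cO_{\Phi^k}(\Phi^\ell(\mathbf{x}))$.

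The hard part will be the single residual hypothesis buried in the choice of $v$: the linearization above fails at a periodic residue point whose multiplier reduces to $0$ --- a superattracting periodic point --- and I do not see how to guarantee, in general, a place at which the orbit avoids all non-exceptional superattracting periodic points. This is exactly why an unconditional result currently has to assume that $\varphi$ possesses no superattracting periodic points besides exceptional ones: with that hypothesis every residue cycle is made of indifferent or non-superattracting attracting points, both handled above, and the argument closes; without it the conjecture stays open. For coefficients in $\bC\setminus\bQb$ I would specialize: pick a finitely generated $\bZ$-subalgebra $R\subseteq\bC$ containing all coefficients and coordinates, together with a $\bQb$-point of $\Spec R$ at which the finitely many relations $\Phi^n(\mathbf{x})\in V$ of interest and the equations of $V$ are all preserved; here I would bring in the results of Medvedev and Scanlon classifying the $(\varphi,\varphi)$-periodic plane curves, both to control which $V$ can remain periodic under specialization and to settle the indecomposable plane-curve case directly, and since membership of a complete progression in $S$ is cut out by Zariski-closure conditions that specialize well, the finitely many progressions found after specialization would lift back to $\bC$ without merging or splitting.
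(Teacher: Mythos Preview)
This statement is a conjecture that the paper does not prove in full, as you yourself note; there is no proof in the paper to compare against, only the partial results of Theorems~\ref{curves}--\ref{rationals-2}. Your sketch does follow the paper's overall strategy for those cases, including the Skolem--Mahler--Lech interpolation and the specialization step via Medvedev--Scanlon, but two technical points are glossed over in ways that matter.

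First, when a coordinate lands in a residue cycle with multiplier $0<|\lambda_i|_v<1$ (attracting but not superattracting), the map $n\mapsto\lambda_i^n$ does \emph{not} extend to a $p$-adic analytic function on $\bZ_p$, so your Strassmann argument breaks for that coordinate; saying such terms ``tend rapidly to $0$'' conflates the limit as the integer $n\to\infty$ with $p$-adic analyticity in $n$. The paper handles the attracting case by an entirely separate result, \cite[Theorem~1.3]{GT-newlog}, not by the same interpolation. Second, you understate the difficulty of producing the place $v$. Even under the no-superattracting hypothesis on $\varphi$, at any particular $v$ the residue cycle reached by a coordinate may well be attracting, and there is no a~priori reason all $g$ coordinates can be made simultaneously indifferent. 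The paper circumvents this only in restricted settings: for a curve $C\subset\bP^1\times\bP^1$ it invokes Siegel's theorem on integral points (Section~\ref{intersection theory}) to manufacture infinitely many $v$ at which $\varphi^n(x)$ and $\varphi^n(y)$ land in the \emph{same} residue class, so both coordinates are forced to have identical local behavior (both indifferent, or both attracted to the same point); for arbitrary $V$ it succeeds only when $\varphi$ is quadratic, because then there is a single finite critical point whose reduction can be kept non-periodic (Proposition~\ref{quad 1}). A minor further point: in the indifferent case the paper linearizes via Rivera-Letelier's quasiperiodicity domains, conjugating an iterate to the translation $t\mapsto t+k$ rather than to $z\mapsto\lambda z$ near a fixed point; this matters because the periodic residue class need not contain any fixed point.
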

See Section~\ref{notation} for the definition of the orbit
$\cO_{\Phi}(\alpha)$.
Note that the orbits for which $k=0$ are singletons, so that the
conjecture allows not only infinite forward orbits but also finitely
many extra points.

Note also that if Conjecture~\ref{dynamical M-L} holds
for a given map $\Phi$, variety $V$, and
non-preperiodic point $\alpha=(x_1,\ldots,x_g)$,
and if $V$ intersects the $\Phi$-orbit of $\alpha$
in infinitely many points, then $V$ must contain a positive-dimensional
subvariety $V_0$ that is periodic under $\Phi$.  Indeed,
the conjecture says that there are integers
$k\geq 1$ and $\ell\geq 0$ such that
$\Phi^{nk+\ell}(\alpha)$ lies on $V$ for all $n\geq 0$.
Since $\alpha$ is not preperiodic, the set
$S=\{\Phi^{nk+\ell}(\alpha)\}_{n\geq 0}$ is infinite,
and therefore its Zariski closure $V'_0$ contains positive-dimensional
components.  Thus, if we let $V_0$ be the union of
the positive-dimensional irreducible subvarieties of $V_0'$,
then $V_0$ is positive-dimensional and fixed by $\Phi^k$, as claimed.


Conjecture~\ref{dynamical M-L} fits into Zhang's far-reaching system
of dynamical conjectures \cite{ZhangLec}.  Zhang's conjectures include
dynamical analogues of the Manin-Mumford and Bogomolov conjectures for
abelian varieties (now theorems of Raynaud \cite{Raynaud1, Raynaud2},
Ullmo \cite{Ullmo}, and Zhang \cite{Zhang}), as well as a conjecture
about the Zariski density of orbits of points under fairly general
maps from a projective variety to itself. This latter conjecture of
Zhang takes the following form in the case of polynomial actions on
$\bA^g$.

\begin{conjecture}
\label{Zhang-conjecture}
Let $f_1,\dots,f_g\in \overline{\mathbb{Q}}[t]$ be polynomials of the same degree
$d\geq 2$, and
let $\Phi$ be their action coordinatewise on $\bA^g$. Then there is
a point $(x_1,\dots,x_g)\in\bA^g(\overline{\mathbb{Q}})$ such that
$\cO_{\Phi}((x_1,\dots,x_g))$ is Zariski dense in $\bA^g$.
\end{conjecture}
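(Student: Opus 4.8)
The plan is to reduce Conjecture~\ref{Zhang-conjecture} to a classification of the $\Phi$-periodic subvarieties of $\bA^g$ together with a Northcott-type counting estimate, and then to build the desired point one coordinate at a time. First note that if $\alpha\in\bA^g(\oQ)$ has infinite orbit but $Y:=\overline{\cO_\Phi(\alpha)}\subsetneq\bA^g$, then, since each $f_i$ is a nonconstant polynomial, $\Phi$ is a finite morphism, $Y$ is forward-invariant, and chasing its positive-dimensional components through the descending chain $Y\supseteq\Phi(Y)\supseteq\Phi^2(Y)\supseteq\cdots$ shows that the orbit of $\alpha$ eventually enters a proper, positive-dimensional $\Phi$-periodic subvariety of $\bA^g$. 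Because the $\Phi$-preimage of a periodic subvariety of the product map $(f_1,\dots,f_g)$ is again a union of subvarieties of the same type (preimages of ``relation'' equations are again relation equations, and preimages of preperiodic points are preperiodic), it therefore suffices to produce a point of $\bA^g(\oQ)$ lying on none of the proper ``relation varieties'' classified in the next paragraph, and with no preperiodic coordinate.

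The crucial input is a structure theorem for these subvarieties --- exactly the kind of result of Medvedev and Scanlon referred to in the abstract, and the step I expect to be the main obstacle. After conjugating and splitting off the ``special'' coordinates --- those $f_i$ linearly conjugate to a power map $t^d$ or to a Chebyshev polynomial, for which the periodic subvarieties come from translates of algebraic subgroups of $\bG_m$ (resp.\ their Chebyshev images) --- one reduces to the case in which every $f_i$ is disintegrated. In that case every irreducible $\Phi$-periodic subvariety of $\bA^g$ is a component of a relation variety: $\{1,\dots,g\}$ is partitioned into blocks, within each block the coordinates satisfy equations $g_i(x_i)=g_j(x_j)$ with $g_i,g_j$ polynomials intertwining suitable iterates of $f_i$ and $f_j$, together with (on some blocks) equations $x_i=c_i$ with $c_i$ preperiodic for $f_i$. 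Ritt's theory restricts the admissible intertwining pairs severely: they are assembled from the iterates $f_i^m,f_j^m$ and a finite group of linear conjugacies, and two coordinates governed by dynamically unrelated polynomials impose no relation at all.

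Granting this, I would construct $\alpha=(x_1,\dots,x_g)\in\bA^g(\oQ)$ by induction on the coordinates, maintaining that the $(f_1,\dots,f_i)$-orbit of $(x_1,\dots,x_i)$ is Zariski dense in $\bA^i$. Given $x_1,\dots,x_i$, a proper relation variety through $(x_1,\dots,x_{i+1})$ must dominate $\bA^i$ under the projection forgetting the last coordinate (otherwise its image would be a proper periodic subvariety through $(x_1,\dots,x_i)$, against the inductive hypothesis); inspecting the block structure, this forces either $x_{i+1}=c$ with $c$ preperiodic for $f_{i+1}$, or a relation $g(x_{i+1})=h(x_j)$ for some $j\le i$ and some admissible intertwining pair $(g,h)$. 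So it is enough to choose $x_{i+1}$ avoiding this countable list, and the classification makes the list very thin when measured by height: the preperiodic points of $f_{i+1}$ of bounded degree are finite in number (Northcott); the values $h(x_j)$ have canonical height growing geometrically in $\deg h$, so only $O(\log B)$ of them (or $O(B)$ if $f_j$ is special, because of the scalar factors) have height at most $B$; and the solutions of $g(x_{i+1})=h(x_j)$ have $\widehat h_{f_{i+1}}$-height bounded along the admissible pairs, hence only finitely many are of bounded degree. Since the rational numbers of height at most $B$ far outnumber these $O(B)$ exceptions once $B$ is large, a valid $x_{i+1}$ exists, and the induction delivers a point whose $\Phi$-orbit is Zariski dense. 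Beyond the classification theorem, the remaining work is to carry out this height bookkeeping uniformly in the special and ``mixed'' cases, where the families of intertwining maps are larger.
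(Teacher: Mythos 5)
The statement you are addressing is Conjecture~\ref{Zhang-conjecture}, which the paper states as a \emph{conjecture} and does not prove: the authors only remark that a stronger form was established in \cite[Theorem 5.11]{Medvedev/Scanlon}. So there is no proof in the paper to compare yours against. Your outline is essentially the known deduction of Zhang's density statement for split polynomial maps from the Medvedev--Scanlon classification of $\Phi$-periodic subvarieties, and the surrounding steps are sound in outline: the reduction to avoiding proper preperiodic subvarieties, the equivariance of the coordinate projections in the inductive step, and the height bookkeeping (backward-orbit-type solutions have bounded canonical height, hence are finite in a fixed number field by Northcott; forward-orbit-type values $\zeta\varphi^r(x_j)$ have geometrically growing height, hence only $O(\log B)$ of height at most $B$). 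One small repair: since the $f_i$ and the earlier coordinates live over $\oQ$, you should fix a number field $K$ containing all relevant data and count $K$-points of bounded height rather than rational numbers.

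The genuine gap is the one you flag yourself: the whole argument is conditional on the classification of periodic (and, via preimages, preperiodic) subvarieties for tuples of disintegrated polynomials, which you simply grant. That classification is the hard theorem --- Medvedev and Scanlon prove it using the model theory of difference fields together with Ritt's theory of polynomial decomposition --- and nothing in your sketch supplies it or a substitute; without it the countable excluded set cannot even be described, let alone counted. Two deferred points are also needed for the conjecture as stated: first, the $f_i$ may be distinct polynomials of the same degree, so you need the classification for products of pairwise distinct disintegrated polynomials and the mixed case where some coordinates are conjugate to $t^d$ or $D_d$ and others are not (your ``dynamically unrelated coordinates impose no relation'' claim is exactly part of what must be proved); second, your assertion that $\Phi$-preimages of relation curves are again relation curves, used to pass from periodic to preperiodic subvarieties, needs an argument --- either classify components of iterated preimages of periodic curves, or run the height count directly on the preimage equations $f^{\ell}(x_{i+1})=\zeta\varphi^{r+\ell}(x_j)$. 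In short, the skeleton matches how this statement is derived in the literature, but as written it is a conditional reduction, not a proof, and the paper itself leaves the statement as a conjecture for precisely this reason.
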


Conjectures \ref{Zhang-conjecture} and \ref{dynamical M-L} may be
thought of as complementary.  Conjecture~\ref{Zhang-conjecture} posits
that there is a point in $\bA^g$ outside the union of the
preperiodic proper subvarieties of $\bA^g$ under the action of $\Phi$,
while Conjecture~\ref{dynamical M-L} asserts if a point $\alpha$ lies
outside this union of preperiodic subvarieties, then the orbit of
$\alpha$ under $\Phi$ intersects any subvariety $V$ of $\bA^g$ in at
most finitely many points. We view our Conjecture~\ref{dynamical M-L} as an analogue of the classical Mordell-Lang conjecture for arithmetic dynamics where groups of rank one are replaced by single orbits. We also note that a stronger form of Conjecture~\ref{Zhang-conjecture} was proved in \cite[Theorem 5.11]{Medvedev/Scanlon}.


In this paper, we prove Conjecture~\ref{dynamical M-L} over number
fields for curves embedded in $\bA^g$ under the diagonal action of any
polynomial which has no periodic superattracting points. (Roughly
speaking, a superattracting periodic point is a periodic point at
which the derivative vanishes; for a formal
definition of superattracting points, see Section~\ref{notation}.)
In fact, we prove the following more general statement.

\begin{theorem}\label{curves}
  Let $C\subset \left(\bP^1\right)^g$ be a curve defined over $\oQ$,
  and let $\Phi:=(\varphi,\dots,\varphi)$ act on $\left(\bP^1\right)^g$
  coordinatewise, where $\varphi\in \oQ(t)$ is a rational function with
  no periodic superattracting points other than exceptional
  points. Let $\cO$ be the $\Phi$-orbit of a point
  $(x_1,\dots,x_g)\in\left(\bP^1\right)^g(\oQ)$. Then $C(\oQ)\cap\cO$
  is a union of at most finitely many orbits of the form
  $\{\Phi^{nk+\ell}(x_1,\dots,x_g)\}_{n\ge 0}$ for nonnegative
  integers $k$ and $\ell$.
\end{theorem}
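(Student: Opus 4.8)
The plan is to reduce the problem to a $p$-adic statement and then invoke an integrality/linear-recurrence argument. First I would choose a prime $\mathfrak{p}$ of a number field $K$ containing all the relevant data (the coefficients of $\varphi$, the curve $C$, and the starting point) such that $\varphi$ has good reduction at $\mathfrak{p}$ and such that the point $(x_1,\dots,x_g)$ reduces to a periodic point of $\Phi$ modulo $\mathfrak{p}$. The existence of such a prime, after possibly passing to an iterate, is the kind of statement one proves using the fact that $\bP^1(\bar{K}_{\mathfrak p})$ is compact and that for all but finitely many primes $\varphi$ has good reduction, so the reduction of any orbit is eventually periodic; the hypothesis that $\varphi$ has no superattracting periodic points other than exceptional ones is exactly what guarantees that one can arrange the local behavior so that the iterates $\Phi^{nk}(x_1,\dots,x_g)$ all lie in a single residue disc on which $\varphi^k$ acts as an analytic contraction (or at least a non-expanding map with an attracting fixed point), allowing a power-series parametrization of the orbit.

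Next, having placed the orbit inside a $\mathfrak{p}$-adic polydisc, I would write each coordinate of $\Phi^{nk+\ell}(x_1,\dots,x_g)$ as a convergent $\mathfrak{p}$-adic power series in $n$ (this is the standard analytic-uniformization step: an analytic map fixing a point with multiplier a $\mathfrak p$-adic unit or in the maximal ideal can be conjugated to multiplication, and one extracts an exponential-type formula). Plugging these series into a defining equation $F$ for the curve $C$ yields a single $\mathfrak{p}$-adic analytic function $g(n) = F(\Phi^{nk+\ell}(x_1,\dots,x_g))$ which vanishes at every integer $n$ for which $\Phi^{nk+\ell}(x_1,\dots,x_g) \in C$. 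If this set of $n$ is infinite, then by the $\mathfrak{p}$-adic Weierstrass preparation theorem (a nonzero $\mathfrak{p}$-adic analytic function on a disc has finitely many zeros) $g$ must be identically zero, which forces the entire orbit $\{\Phi^{nk+\ell}(x_1,\dots,x_g)\}_{n\ge 0}$ to lie on $C$; handling the finitely many residue classes of $n$ mod $k$ separately then yields the stated finite union of orbits.

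The main obstacle I anticipate is the first step: producing a prime of good reduction at which the orbit is $\mathfrak{p}$-adically well-behaved, i.e.\ lands in an attracting residue disc, rather than, say, approaching an indifferent or repelling periodic cycle where no contraction is available. This is precisely where the no-superattracting-periodic-points hypothesis must be used, together with the classification of $\mathfrak p$-adic periodic points (attracting, indifferent, repelling) and a counting or pigeonhole argument over infinitely many primes to find one where the reduction $\bar\varphi$ has a periodic cycle whose multiplier is a unit and whose attracting basin (after lifting) contains a tail of the orbit. One also needs to rule out the exceptional cases carefully: if $\varphi$ is conjugate to a power map or a Chebyshev polynomial, the exceptional points are superattracting but are explicitly allowed, and there the curve $C$, if it meets the orbit infinitely often, must itself be preperiodic by a direct torsion/root-of-unity argument. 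Assembling these local pieces into a clean global statement, while tracking the finitely many bad primes and the finitely many ``extra'' intersection points coming from the pre-periodic tail, is the technical heart of the argument.
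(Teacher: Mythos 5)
Your analytic step has the local dynamics essentially backwards, and this is a genuine gap rather than a presentational difference. If the orbit lands in the basin of an attracting cycle with multiplier $\lambda$ satisfying $0<|\lambda|_v<1$, linearization gives a parametrization of the form $n\mapsto h(\lambda^{nk}u)$, and $n\mapsto\lambda^{nk}$ is \emph{not} a convergent $p$-adic power series in $n$ (interpolation of $n\mapsto\lambda^n$ requires $|\lambda-1|_v<1$, i.e.\ a unit multiplier congruent to $1$, not a contraction). So ``contraction implies power series in $n$'' fails, and the Skolem--Mahler--Lech step collapses exactly in the case you were aiming for. The paper does the opposite: the power-series-in-$n$ parametrization comes from Rivera-Letelier's quasiperiodicity disks \cite{Riv}, i.e.\ the \emph{indifferent} case where the derivative is a unit on the residue class (Lemma~\ref{lem:siegel}, Theorem~\ref{indifferent}, Theorem~\ref{avoiding attracting points}), while the attracting (non-superattracting) case is handled by a different argument, quoted from \cite{GT-newlog}, which exploits that both coordinates are attracted to the \emph{same} cycle. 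The hypothesis on superattracting points is not used to ``arrange a contraction''; it is used to keep the ramification of all iterates bounded (Lemma~\ref{ramlem}) and to ensure the attracting points one meets admit a linearization at all.

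The second gap is the choice of place, which you correctly identify as the heart of the matter but for which you offer only a pigeonhole over primes; no such counting argument is known to work (every place of good reduction makes the reduced orbit eventually periodic, but nothing forces the relevant cycle to be indifferent). The paper's mechanism is different and global: it first reduces Theorem~\ref{curves} to the case $g=2$ by induction, projecting $C$ to the first and last $g$ coordinates and using that a preperiodic irreducible curve meeting the orbit infinitely often must be periodic (Claim~\ref{preperiodic but not periodic}); then, for a curve in $\bP^1\times\bP^1$, it uses intersection theory on $\bP^1\times\bP^1$ together with Siegel's theorem on integral points (Propositions~\ref{integral} and~\ref{poly}, Theorem~\ref{same v}) to produce infinitely many places $v$ of good reduction at which $\varphi^n(x)$ and $\varphi^n(y)$ lie in the same residue class for some $n$. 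That collision is what makes the dichotomy work: either the common residue class never meets an attracting cycle, in which case it is a quasiperiodicity disk and Theorem~\ref{indifferent} applies, or both coordinates are attracted to the same cycle and \cite{GT-newlog} applies. Without the $g=2$ reduction, the Siegel-type input, and the correct split into indifferent versus attracting behavior, your outline cannot be completed as stated. (Your remark about the power-map and Chebyshev-like cases is in the right spirit; the paper disposes of the two-exceptional-point case via the $\bG_m^2$ result of \cite{GT-newlog}.)
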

See Section~\ref{notation} for a definition of exceptional
points.

Using recent results of 
Medvedev and Scanlon \cite{Medvedev/Scanlon}
from model theory and polynomial decomposition, we will extend Theorem~\ref{curves}
to the complex numbers, at least under the action
of indecomposable polynomials.  (See Definition~\ref{def:indecomp}.) Our method from Section~\ref{extensions to C} also extends to the case of any polynomials with complex coefficients, as long as they do not have periodic superattracting points other than exceptional points (see Remark~\ref{possible extensions}).

\begin{theorem}
\label{polynomials, arbitrary curves over C}
Let $\varphi\in\bC[t]$ be an indecomposable polynomial
with no periodic superattracting points other than exceptional points,
and let $\Phi$ be its diagonal action on
$\bA^g$ (for some $g\ge 1$). Let $\cO$ be the $\Phi$-orbit of a point
$P$ in $\bA^g(\bC)$, and let $C$ be a curve defined over $\bC$. Then
$C(\bC)\cap\cO$ is at most a finite union of orbits of the form
$\{\Phi^{nk+\ell}(P)\}_{n\ge 0}$, for some nonnegative integers $k$
and $\ell$.
\end{theorem}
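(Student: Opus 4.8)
The plan is to reduce the complex case to the already-established number field case (Theorem~\ref{curves}) by a specialization argument, using the Medvedev--Scanlon classification of periodic plane curves under $(\varphi,\varphi)$ as the structural input that makes such a reduction possible. First I would dispose of the case where $P$ is preperiodic under $\Phi$: then $\cO$ is finite and there is nothing to prove. So assume $P$ is not preperiodic, and suppose toward a contradiction that $C(\bC)\cap\cO$ is infinite but is not covered by finitely many orbits of the stated form; as noted in the remark following Conjecture~\ref{dynamical M-L}, this forces $C$ to contain a positive-dimensional periodic subvariety, so $C$ itself is (a component of) a curve periodic under $\Phi$. Replacing $\Phi$ by an iterate $\Phi^k$, we may assume $C$ is fixed by $\Phi$. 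The projection of $C$ to any pair of coordinates is then a curve in $\bA^2$ fixed by $(\varphi,\varphi)$; Medvedev and Scanlon's description of such curves (for $\varphi$ indecomposable, these are graphs $y = \sigma(x)$ or $x = \sigma(y)$ where $\sigma$ commutes with an iterate of $\varphi$, together with the obvious degenerate lines) pins down $C$ up to finitely many explicit shapes, each defined by equations with coefficients in a finitely generated subring of $\bC$.

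Next I would set up the specialization. Let $R\subset\bC$ be a finitely generated $\bZ$-algebra containing the coefficients of $\varphi$, the coordinates of $P$, and the coefficients defining $C$ (and the finitely many $\sigma$'s appearing in the Medvedev--Scanlon description). Choose a $\oQ$-point of $\Spec R$, i.e. a ring homomorphism $\pi:R\to\oQ$, chosen generically enough that: (a) $\pi(\varphi)$ still has degree $d\ge 2$ and still has no periodic superattracting points other than exceptional points; (b) the specialized curve $\pi(C)$ is still irreducible, still fixed by $(\pi(\varphi),\dots,\pi(\varphi))=:\pi(\Phi)$, and still genuinely positive-dimensional; and (c) the specialized point $\pi(P)$ is still non-preperiodic under $\pi(\Phi)$. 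The point of invoking the Medvedev--Scanlon structure is precisely that ``$C$ is $\Phi$-periodic'' is then a \emph{robust} geometric condition — it is cut out by the vanishing of finitely many polynomial identities among the $\sigma$'s and $\varphi$, conditions that survive specialization — rather than something that could disintegrate into a finite intersection after reduction. For (a), note that having a periodic superattracting point is a closed condition detectable by resultants in the coefficients of $\varphi$, so its failure is a nonempty Zariski-open condition on $\Spec R$; for (c) one uses that the canonical height $\hhat_{\varphi}(P)$ (or the fact that $\cO$ is infinite) can be preserved by avoiding a thin/proper closed set, via a Hilbert-irreducibility or height-comparison argument applied over the function field $\Frac(R)$.

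Now apply Theorem~\ref{curves} to $\pi(C)$, $\pi(\varphi)$, $\pi(\Phi)$, and $\pi(P)$, all defined over $\oQ$: since $\pi(C)$ contains the positive-dimensional $\pi(\Phi)$-periodic subvariety $\pi(C)$ itself, while $\pi(P)$ is non-preperiodic — this is exactly the configuration that Theorem~\ref{curves} handles, and it tells us the intersection is a finite union of orbit-progressions; but in fact what we extract is the contrapositive structural fact that a $\Phi$-fixed curve through a non-preperiodic orbit is impossible \emph{in the relevant cases}, so that the offending curve $C$ over $\bC$ cannot have existed either — any point of $\cO$ on $C$ must lie on one of the finitely many periodic-orbit pieces, since lifting the conclusion back from $\oQ$ to $\bC$ preserves the combinatorial structure $\{\Phi^{nk+\ell}(P)\}$. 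The main obstacle, and the step requiring the most care, is ensuring that a \emph{single} specialization $\pi$ can be chosen to preserve all of conditions (a)--(c) simultaneously — in particular guaranteeing that $\pi(P)$ stays non-preperiodic (a ``moving target'' condition, not an obviously closed one) while $\pi(C)$ retains its full periodicity; here is where the explicit Medvedev--Scanlon normal forms for $\varphi$ indecomposable are essential, because they reduce the periodicity of $C$ to finitely many algebraic identities that specialize transparently, and where the indecomposability hypothesis on $\varphi$ is genuinely used.
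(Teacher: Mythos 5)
Your argument is circular at its central step. You assume, toward a contradiction, that $C(\bC)\cap\cO$ is infinite and not covered by finitely many orbit progressions, and then invoke the remark following Conjecture~\ref{dynamical M-L} to conclude that $C$ contains a positive-dimensional $\Phi$-periodic subvariety, i.e.\ that $C$ is itself periodic. But that remark presupposes that the conjecture \emph{holds} for the given data; showing that an infinite intersection forces $C$ to be $\Phi$-periodic is precisely the content of the theorem (the proofs of Theorems~\ref{curves} and~\ref{polynomials, plane curves over C} open with ``it suffices to prove that $C$ is $\Phi$-periodic''), so you have assumed what must be proved. Moreover, if you could legitimately establish that $C$ is periodic, you would be finished without any specialization at all: if $\Phi^k(C)=C$, then the intersection of $C$ with each progression $\{\Phi^{nk+\ell}(P)\}_{n\ge 0}$ is either empty or a tail (cf.\ Claim~\ref{preperiodic but not periodic}). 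And your concluding step is not a contradiction: a $\Phi$-fixed curve meeting a non-preperiodic orbit infinitely often is entirely possible (the diagonal in $\bA^2$ does exactly this), and is precisely the permitted conclusion of Theorem~\ref{curves}, so applying that theorem to the specialized data rules nothing out.

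Your specialization is also run in the wrong direction. The paper's proof of Theorem~\ref{polynomials, plane curves over C} inducts on $d=\trdeg_{\bQ}K$ and reduces the data at infinitely many places $\gamma$ of the function field $K/E$, chosen so that good reduction, irreducibility and degree of $\cC_\gamma$, injectivity of reduction on the orbit, non-periodicity of the reduced critical points, indecomposability, and non-conjugacy to $t^m$ or $D_m$ all persist (Propositions~\ref{needed conjugate reduction} and~\ref{compositional powers}, and \cite[Proposition~6.2]{Mike} for preserving non-preperiodicity). The inductive hypothesis then shows each reduced curve $\cC_\gamma$ is $\Phi_\gamma$-periodic, Theorem~\ref{conj: periodic curves} pins each one to one of finitely many explicit equations, and since infinitely many $\gamma$ produce the \emph{same} form, that equation lifts to $C$ itself, proving $C$ is periodic over $\bC$; the $g>2$ statement then follows by projecting to coordinate pairs as in the proof of Theorem~\ref{curves}. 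In your scheme, by contrast, the information flows the wrong way: knowing that the specialized intersection over $\oQ$ is a finite union of orbit progressions gives no control over $C(\bC)\cap\cO$, and you use Medvedev--Scanlon only to argue that periodicity of $C$ \emph{survives} specialization, whereas what is actually needed (and what the paper supplies) is the converse transfer, from periodicity of infinitely many specializations back to periodicity of $C$.
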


When the function $\varphi$ is a quadratic polynomial, we can prove a
similar result for subvarieties of any dimension.
\begin{theorem}\label{quadratic}
  Let $V\subset \left(\bP^1\right)^g$ be a subvariety defined over
  $\oQ$, and let $\Phi:=(f,\dots,f)$ act on $\left(\bP^1\right)^g$
  coordinatewise, where $f \in \oQ[t]$ is a quadratic polynomial with no
  periodic superattracting points in $\oQ$. Let $\cO$ be the
  $\Phi$-orbit of a point
  $(x_1,\dots,x_g)\in\left(\bP^1\right)^g(\oQ)$. Then $V(\oQ)\cap\cO$
  is a union of at most finitely many orbits of the form
  $\{\Phi^{nk+\ell}(x_1,\dots,x_g)\}_{n\ge 0}$ for nonnegative
  integers $k$ and $\ell$.
\end{theorem}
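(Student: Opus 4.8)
The plan is to reduce the general statement of Theorem~\ref{quadratic} to the curves case (Theorem~\ref{curves}) by an inductive dimension argument, using the structure of periodic subvarieties under the diagonal action of a quadratic polynomial. First I would dispose of the trivial cases: if $(x_1,\dots,x_g)$ is preperiodic under $\Phi$ there is nothing to prove, so assume it is not. We may also replace $V$ by the Zariski closure of $V(\oQ)\cap\cO$, so that it suffices to show that this closure is a finite union of periodic subvarieties (together with finitely many isolated orbit points); equivalently, one shows that if $V(\oQ)\cap\cO$ is infinite then $V$ contains a positive-dimensional periodic subvariety, and then one inducts. So the real content is: for $V$ irreducible of dimension $\ge 1$ such that $V(\oQ)\cap\cO$ is Zariski dense in $V$, the variety $V$ is periodic under $\Phi$.

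The key step is to understand periodic subvarieties of $(\bP^1)^g$ under $\Phi=(f,\dots,f)$. Projecting to a pair of coordinates $(i,j)$, a periodic $V$ maps into a periodic subvariety of $\bP^1\times\bP^1$ under $(f,f)$; by the Medvedev--Scanlon description of such plane curves (invoked already for Theorem~\ref{polynomials, arbitrary curves over C}) these are severely constrained — essentially graphs of maps commuting with an iterate of $f$, or vertical/horizontal fibers over periodic points. The plan is to use the quadratic hypothesis to pin these down: for a quadratic $f$ with no superattracting periodic point in $\oQ$, the only polynomials commuting with an iterate of $f$ are iterates of $f$ itself (a Ritt-type rigidity, since a quadratic with nontrivial commuting symmetries would be conjugate to a Chebyshev or power map, which has a superattracting fixed point). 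Hence each coordinate projection of $V$ to $\bP^1\times\bP^1$ is either a graph $x_j=f^m(x_i)$ or $x_i=f^m(x_j)$, or lies in a fiber over a periodic point; combining these across all pairs shows that $V$ is cut out by finitely many equations of the form $x_j=f^{m_{ij}}(x_i)$ and $x_i=c_i$ (periodic $c_i$), which makes $V$ manifestly periodic and, after reindexing, essentially the graph of an orbit-type relation.

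With this structure in hand, the induction runs as follows. Suppose $V(\oQ)\cap\cO$ is infinite. Pick coordinates $i,j$ on which the projection of $\cO$ is nondegenerate; the projected points $(f^n(x_i),f^n(x_j))$ then lie on the projected curve infinitely often, and Theorem~\ref{curves} applies to the closure of that plane curve, forcing it to contain a periodic curve which — by the commuting-polynomial rigidity above — is a graph $x_j=f^{m}(x_i)$ (or a fiber, handled separately since then $x_i$ or $x_j$ is eventually periodic along $\cO$, reducing $g$). Intersecting $V$ with this graph and iterating over all pairs of coordinates, one peels off relations until $V$ is shown to be a finite union of periodic subvarieties up to finitely many extra orbit points; then Theorem~\ref{curves}-style control of the residual curve pieces and the trivial points finishes the count.

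The main obstacle I anticipate is the classification of positive-dimensional periodic subvarieties of $(\bP^1)^g$ under the diagonal quadratic action, and in particular making the reduction to the two-variable Medvedev--Scanlon description uniform across all coordinate pairs simultaneously — one needs that the pairwise graph relations $x_j=f^{m_{ij}}(x_i)$ are mutually compatible (a cocycle-type condition $m_{ik}=m_{ij}+m_{jk}$ on the exponents) so that $V$ really is an orbit of a lower-dimensional linear subvariety under $\Phi$, rather than something more exotic. The quadratic hypothesis is exactly what buys this rigidity (ruling out the Chebyshev/power exceptions and their extra self-maps), and the no-superattracting-point condition is what lets us finally invoke Theorem~\ref{curves} on the residual curves. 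I would expect the write-up to spend most of its length on this periodic-subvariety classification, with the induction itself being short once the classification is available.
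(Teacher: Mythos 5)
Your reduction to the curve case via coordinate-pair projections has a genuine gap as soon as $\dim V\ge 2$ (or even when $V$ is a surface all of whose pairwise projections are dominant). The Zariski closure of $\{(f^n(x_i),f^n(x_j)) : \Phi^n(x_1,\dots,x_g)\in V\}$ need not be a curve --- it can be all of $\bP^1\times\bP^1$ --- in which case Theorem~\ref{plane curves} gives no constraint for that pair; and even when each pairwise closure is a proper curve, $V$ (or the closure of $V\cap\cO$) is in general \emph{not} a component of the intersection of the pullbacks $\pi_{ij}^{-1}(\pi_{ij}(V))$, so showing each pairwise relation is periodic does not let you ``peel off relations'' and conclude that $V$ is periodic. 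This is exactly why the induction used for Theorem~\ref{curves} (where a curve in $(\bP^1)^{g+1}$ is a component of $(C_1\times\bP^1)\cap(\bP^1\times C_2)$) does not extend to higher-dimensional subvarieties; to make your plan work you would need the full Medvedev--Scanlon description of invariant subvarieties of $(\bP^1)^g$, applied to a periodic orbit closure rather than to $V$ itself, which is a substantially harder input than anything you cite. Moreover, the rigidity you want to extract from the quadratic hypothesis is not actually available: the hypothesis (no superattracting periodic point in $\oQ$) does not exclude $f=t^2-2$, which is conjugate to the Chebyshev polynomial $D_2$ (its finite critical point $0$ is strictly preperiodic, not periodic), and that $f$ commutes with infinitely many polynomials that are not iterates of $f$; so the claimed classification of periodic plane curves as graphs $x_j=f^m(x_i)$ fails in a case your argument must cover.

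The paper's proof is entirely different, much shorter, and handles $V$ of arbitrary dimension in one stroke, with no classification of periodic subvarieties. Since $f$ is quadratic it has a unique finite critical point, which by hypothesis is not periodic; Lemma~\ref{sil} (resting on Silverman's integrality theorem via Lemma~\ref{simple siegel}) produces infinitely many places $v$ of good reduction at which the reduction of this critical point is not periodic, whence Proposition~\ref{quad 1} gives $|f'(z)|_v=1$ for every $v$-integral $z$ whose reduction is periodic --- i.e., no attracting cycles in any residue class the orbit can enter. Choosing such a $v$ with $|x_i|_v\le 1$ for all $i$, the entire orbit stays $v$-integral and avoids residue classes of attracting periodic points, and Theorem~\ref{avoiding attracting points} (the Rivera-Letelier quasiperiodicity-disk result, which already treats arbitrary subvarieties) yields the conclusion directly. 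So the ``hard part'' you anticipated spending most of the write-up on is not needed here; Medvedev--Scanlon enters the paper only in Section~\ref{extensions to C}, for the extension to $\bC$.
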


For quadratic polynomials over the rational numbers, we can remove the
hypothesis on superattracting points and obtain a 
stronger result.

\begin{theorem}\label{rationals}
  Let $V\subset \left(\bP^1\right)^g$ be a subvariety defined over
  $\bQ$, and let $\Phi:=(f,\dots,f)$ act on $\left(\bP^1\right)^g$
  coordinatewise, where $f \in \bQ[t]$ is a quadratic polynomial.
  Let $\cO$ be the $\Phi$-orbit of a point
  $(x_1,\dots,x_g)\in\left(\bP^1\right)^g(\bQ)$. Then $V(\bQ)\cap\cO$
  is a union of at most finitely many orbits of the form
  $\{\Phi^{nk+\ell}(x_1,\dots,x_g)\}_{n\ge 0}$ for nonnegative
  integers $k$ and $\ell$.
\end{theorem}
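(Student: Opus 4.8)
The plan is to reduce by conjugation to a normal form, invoke Theorem~\ref{quadratic} whenever it applies, and dispose of the two remaining maps by hand. First I would conjugate: replacing $f$ by $\mu^{-1}\circ f\circ\mu$ for an affine $\mu\in\PGL_2(\bQ)$, and replacing $V$ and $P$ by their pullbacks, changes neither the field of definition nor the shape of the conclusion, so we may assume $f(t)=t^2+c$ with $c\in\bQ$. The only finite critical point of $f$ is $0$, and $\infty$ is exceptional, so a periodic point $z$ of period $m$ is superattracting iff $(f^m)'(z)=2^m\prod_{j=0}^{m-1}f^j(z)=0$, i.e.\ iff $0$ lies on the cycle of $z$, i.e.\ iff $0$ is periodic under $f$. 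If $0$ is not periodic then $f$ has no superattracting periodic points other than $\infty$, and Theorem~\ref{quadratic} applies verbatim. If $0$ is periodic of exact period $n$, then $c$ is a root of $f_c^n(0)$ viewed as a polynomial in $c$, which is monic with integer coefficients; hence $c\in\bZ$. A periodic orbit of $0$ is bounded, forcing $|c|\le 2$, and inspecting $c\in\{-2,-1,0,1,2\}$ shows $0$ is periodic only for $c=0$ and $c=-1$ (for $c=-2$ the orbit of $0$ is $0\mapsto-2\mapsto 2\mapsto 2\mapsto\cdots$, so $0$ is only preperiodic). It therefore remains to prove the theorem for $f(t)=t^2$ and for $f(t)=t^2-1$.

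For $f(t)=t^2$ the map $\Phi$ is coordinatewise squaring. Since $0$ and $\infty$ are fixed, after permuting coordinates, replacing $V$ by its intersection with a coordinate subvariety, and inducting on $g$, we may assume $P=(x_1,\dots,x_g)\in\bG_m^g(\bQ)$, so that $\Phi^n(P)=P^{2^n}$ (coordinatewise power). If $P$ is torsion then $\cO$ is finite and we are done; otherwise $\Gamma:=\langle P\rangle\cong\bZ$, and by the Mordell--Lang theorem for $\bG_m^g$ (Laurent) the set $\{m\ge 0:P^m\in V\}$ is a finite union of arithmetic progressions together with a finite set. For each occurring progression $\{m\equiv a\pmod d\}$, the set $\{n\ge 0:2^n\equiv a\pmod d\}$ is eventually periodic in $n$ (because $2^n\bmod d$ is), hence again a finite union of progressions $\{nk+\ell:n\ge 0\}$ together with a finite set; each such progression is exactly an orbit $\{\Phi^{nk+\ell}(P)\}_{n\ge 0}$, which proves this case.

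For $f(t)=t^2-1$ the only superattracting periodic points are the $2$-cycle $\{0,-1\}$ and the exceptional point $\infty$, and any cycle of $f$ other than $\{0,-1\}$ has nonzero multiplier once $2$ is invertible. Here I would run the $p$-adic interpolation argument underlying Theorem~\ref{quadratic}, choosing the prime so that the orbit of $P$ avoids $\{0,-1\}$ and $\infty$ $p$-adically. For $x\in\bQ$ and $p$ odd: the forward orbit of $x$ converges to $\infty$ in $\bP^1(\bQ_p)$ exactly when $p$ divides the denominator of $x$, and it converges to $\{0,-1\}$ exactly when $f^n(x)\equiv 0\pmod p$ for some $n\ge 0$. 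Thus it suffices to find an odd prime $p$, dividing the denominator of no coordinate $x_i$ of $P$ (nor of any element of the finite orbits of the preperiodic coordinates), such that $f^n(x_i)\not\equiv 0\pmod p$ for all $n\ge 0$ and all $i$ with $x_i$ not preperiodic. Given such a $p$: in each preperiodic coordinate the orbit is eventually periodic, hence eventually constant once $n$ is restricted to a suitable residue class, so those coordinates may be dropped; in each non-preperiodic coordinate the orbit reduces into a cycle of $\bar f$ in $\bF_p$ avoiding $0$, whose multiplier is therefore a $p$-adic unit. Exactly as in the proof of Theorem~\ref{quadratic}, one then finds $k\ge 1$, $\ell\ge 0$, and a $p$-adic analytic map $\theta\colon\bZ_p\to(\bP^1)^g(\Cp)$ with $\theta(n)=\Phi^{nk+\ell}(P)$ for all integers $n\ge 0$. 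Each defining equation of $V$ pulls back under $\theta$ to a $p$-adic analytic function on $\bZ_p$; since a nonzero such function has only finitely many zeros, $\theta^{-1}(V)$ is finite unless every one of these pullbacks vanishes identically, in which case $\theta^{-1}(V)=\bZ_p$ and the whole orbit $\{\Phi^{nk+\ell}(P)\}_{n\ge 0}$ lies on $V$. Running this over the finitely many relevant residue classes of $n$ gives the stated description of $V(\bQ)\cap\cO$.

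The crux, and the only point at which the hypothesis $f\in\bQ[t]$ rather than $f\in\oQ[t]$ is used, is producing such a good prime for $t^2-1$: one must show that for a non-preperiodic $x\in\bQ$ the set of primes $p$ with $f^n(x)\equiv 0\pmod p$ for some $n\ge 0$ is not cofinite (in fact one expects it to have density zero). I expect this to be the main obstacle. It is a statement about the prime divisors appearing in the orbit $\{f^n(x)\}_{n\ge 0}$, which one can approach either via a Chebotarev/equidistribution argument applied to the fields generated by the iterated preimages of $0$, or by a direct estimate on these prime divisors; the corresponding input is not available over a general number field, consistent with the superattracting hypothesis being genuinely needed in Theorems~\ref{curves}--\ref{quadratic}.
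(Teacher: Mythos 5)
Most of your structure matches the paper's: the reduction to $f(t)=t^2+c$ with $c\in\bQ$, the observation that the only problematic parameters are $c=0$ and $c=-1$ (the paper gets there by noting $0$ escapes to infinity $p$-adically when $c\notin\bZ$ and archimedeanly when $c\in\bZ\setminus\{0,-1,-2\}$, and is only preperiodic for $c=-2$, but your rational-root/boundedness argument is equivalent), and the treatment of $c=0$; there you argue directly with Laurent's theorem for $\bG_m^g$ and the eventual periodicity of $2^n\bmod d$, where the paper simply cites \cite[Theorem 1.8]{GT-newlog}, and your version is a legitimate self-contained alternative. Your reformulation of what is needed for $c=-1$ is also correct: an odd prime $p$ at which all non-preperiodic coordinates are $p$-adic integers and $f^n(x_i)\not\equiv 0\pmod p$ for all $n$, after discarding preperiodic coordinates by induction on $g$.

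However, you do not prove that such a prime exists for $f(t)=t^2-1$; you explicitly flag it as ``the main obstacle'' and gesture at Chebotarev/equidistribution over the fields generated by iterated preimages of $0$, or unspecified estimates on prime divisors of the orbit. That is a genuine gap, and the suggested route is unpromising: the density-zero results of Jones used elsewhere in the paper (Theorem~\ref{rationals-2}) expressly exclude $t^2-1$, which is exactly the exceptional case here. The paper closes the gap with a short elementary observation: $f^2(z)=(z^2-1)^2-1=z^2(z^2-2)$, so $f^{-2}(0)=\{0,\pm\sqrt{2}\}$. Hence if $p$ is odd and $2$ is a quadratic nonresidue modulo $p$, then $f^n(x_i)\equiv 0\pmod p$ for some minimal $n\ge 2$ would force $f^{n-2}(x_i)\equiv 0$ or $f^{n-2}(x_i)^2\equiv 2\pmod p$, both impossible; so one only needs $x_i\not\equiv 0$ and $f(x_i)\not\equiv 0\pmod p$. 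Since the primes with $2$ a nonresidue have density $1/2$, one can pick such a $p$ avoiding the finitely many primes dividing numerators of $x_i$, $f(x_i)$ or denominators of the $x_i$. With that prime in hand, $|f'(f^n(x_i))|_p=|2f^n(x_i)|_p=1$ for all $n$, the orbit avoids residue classes of attracting periodic points, and Theorem~\ref{avoiding attracting points} finishes the proof — no new analytic or Chebotarev input is needed, and this is the only point where $\bQ$ (rather than $\oQ$) is truly exploited, as you correctly suspected.
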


Using results of Jones~\cite{Jones}, we can prove the
corresponding result for maps of the form
$\Phi =(f_1,\dots,f_g)$, without the restriction that $f_i=f_j$,
if each $f_j$ is of the form $f_j(t)=t^2 + c_j$ with $c_j\in\Z$.

\begin{theorem}\label{rationals-2}
  Let $V\subset \left(\bP^1\right)^g$ be a subvariety defined over
  $\bQ$, and let $\Phi:=(f_1,\dots,f_g)$ act on $\bA^g$
  coordinatewise, where $f_i(t)=t^2 + c_i$ with $c_i\in\Z$ for each $i$.
  Let $\cO$ be the $\Phi$-orbit of a point
  $(x_1,\dots,x_g)\in (\bZ)^g$. Then $V(\bQ)\cap\cO$
  is a union of at most finitely many orbits of the form
  $\{\Phi^{nk+\ell}(x_1,\dots,x_g)\}_{n\ge 0}$ for nonnegative
  integers $k$ and $\ell$.
\end{theorem}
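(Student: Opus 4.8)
The plan is to reduce the statement to a $p$-adic analytic problem, exactly as in the proof of Theorem~\ref{rationals}; the one genuinely new ingredient is a choice of prime that is good for all $g$ coordinates simultaneously, which will be extracted from the results of Jones~\cite{Jones}. If $(x_1,\dots,x_g)$ is preperiodic under $\Phi$ then $\cO$ is finite and there is nothing to prove, so assume it is not; since $\Phi$ is then injective on $\cO$, it suffices to bound the set of integers $n\ge 0$ with $\Phi^n(x_1,\dots,x_g)\in V$. Because each $f_i\in\bZ[t]$ and $(x_1,\dots,x_g)\in\bZ^g$, the entire orbit lies in $\bZ^g\subset\bA^g$, so $V$ enters the problem only through finitely many polynomials $h_1,\dots,h_r\in\bZ[y_1,\dots,y_g]$ cutting out $V\cap\bA^g$, and $\cO\subset\bZ_p^g$ for every prime $p$.

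The heart of the argument is to choose an odd prime $p$ of good reduction such that, for each index $i$, the forward orbit of $x_i$ under $f_i$ modulo $p$ eventually enters a cycle that avoids the critical point $0$ of $f_i$; equivalently, the multiplier $\lambda_i\in\bF_p^{\times}$ of that cycle is a unit, so that $\Phi$ is nonsingular (rather than superattracting) along the relevant residue disks. For an index $i$ with $c_i\notin\{0,-1,-2\}$ the map $f_i$ is not postcritically finite, and Jones's density theorem~\cite{Jones} shows that the primes for which the reduction modulo $p$ of the orbit of $x_i$ (or of the critical orbit, in the exceptional event that $0$ itself lies in the $f_i$-orbit of $x_i$ over $\bZ$) returns to $0$ form a set of density zero. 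The remaining indices, where $f_i$ is conjugate to $t^2$, to $t^2-1$, or to $t^2-2$, are handled directly: either $x_i$ is preperiodic under $f_i$, or $f_i=t^2$ and it suffices to take $p\nmid x_i$ and argue as in the multiplicative-group case. Intersecting these finitely many density-one (or cofinite) conditions with the primes of good reduction and with $p\ge 3$ leaves a valid $p$.

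With $p$ fixed, the $p$-adic interpolation argument of Theorem~\ref{rationals} goes through with only notational changes (cf.\ \cite{Bell, GT-newlog}). For $n$ beyond some $\ell_0$ the points $\Phi^n(x_1,\dots,x_g)$ lie in $k_0$ residue disks $D_0,\dots,D_{k_0-1}\subset\bZ_p^g$ that are cyclically permuted by $\Phi$, where $k_0$ is a common multiple of the cycle lengths of the reductions. Choose $s$ with $\lambda_i^{\,s}\equiv 1\pmod p$ for all $i$ (e.g.\ $s=\lcm_i\ord_p(\lambda_i)$), set $k:=k_0 s$ and $\Psi:=\Phi^{k}$. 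Then $\Psi$ carries each $D_j$ into itself, and since its Jacobian at every point of $D_j$ reduces modulo $p$ to the identity while on $D_j$ the higher-degree terms of the coordinate functions take values in $p\bZ_p$, the map $\Psi$, recentered on each $D_j$, has the form $v\mapsto v+pH(v)$ with $H$ a power series convergent on $\bZ_p^g$. The standard interpolation lemma for such maps therefore produces, for each residue $\ell$ with $\ell_0\le\ell<\ell_0+k$, a power series $G_\ell$ convergent on $\bZ_p$ with $\Phi^{nk+\ell}(x_1,\dots,x_g)=G_\ell(n)$ for all $n\ge 0$. Each $h_j\circ G_\ell$ is then $p$-adic analytic in $n$, hence by Strassmann's theorem it either vanishes identically or has only finitely many zeros; if every $h_j\circ G_\ell$ vanishes identically, then $\{\Phi^{nk+\ell}(x_1,\dots,x_g)\}_{n\ge 0}$ lies on $V$ and is one of the allowed orbits, and otherwise only finitely many $n$ in that class contribute. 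Together with the finite initial segment $n<\ell_0$, this yields the theorem.

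I expect the middle step — producing a single prime good for all $g$ coordinates at once — to be the main obstacle. The integrality hypotheses are what force the orbit to be $p$-adically bounded at every prime, but one must still rule out the possibility that the reductions repeatedly fall onto the critical points, where the dynamics is superattracting and $\Phi^n$ is no longer interpolable by an analytic function of $n$ (since $2^n$ is not a $p$-adic analytic function of $n$); it is precisely this that the density results of Jones exclude, once the postcritically finite quadratics $t^2,\ t^2-1,\ t^2-2$ have been set aside by elementary arguments.
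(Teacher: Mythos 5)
Your overall strategy --- pick one prime $p$ that works for all $g$ coordinates at once via the density results of \cite{Jones}, then run the $p$-adic interpolation argument of Theorem~\ref{rationals} --- is the same as the paper's, but your prime-selection step has a genuine hole at exactly the delicate point. Your treatment of the postcritically finite maps offers only the alternatives ``$x_i$ is preperiodic'' or ``$f_i=t^2$,'' and says nothing about an index with $f_i(t)=t^2-1$ (or $t^2-2$) and $x_i$ not preperiodic, e.g.\ $f_i(t)=t^2-1$, $x_i=2$. For $t^2-1$ this cannot be absorbed into a density-zero statement: $0$ is a superattracting point of period $2$, $c=-1$ is precisely the case excluded from \cite[Theorem~1.2(iii)]{Jones}, and the set of primes dividing some $f_i^n(x_i)$ need not have density zero; so your plan of ``intersecting finitely many density-one (or cofinite) conditions'' does not go through. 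The paper instead imports the argument from the proof of Theorem~\ref{rationals}: since $f^2(z)=0$ forces $z\in\{0,\pm\sqrt{2}\}$, any odd prime at which $x_i$ and $f_i(x_i)$ are units and $2$ is a quadratic nonresidue keeps the whole orbit away from $0$ modulo $p$; this only bounds the bad primes for the $t^2-1$ indices by density $1/2$, and the proof closes because the indices with $c_i\neq -1$ contribute density zero by \cite{Jones}, so good primes have lower density at least $1/2$. You need this (or an equivalent substitute) to complete your middle step. (For $t^2-2$ your weaker requirement is in fact automatic: if the orbit meets $0$ modulo an odd prime of good reduction, it then follows the critical orbit $0\mapsto -2\mapsto 2$, so the eventual cycle is the fixed point $2$, whose multiplier $4$ is a unit --- but as written your dichotomy simply omits this case; note also that the paper applies \cite{Jones} to every $c_i\neq -1$, including $c_i=0,-2$.)

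A secondary, fixable point: your normal form for $\Psi=\Phi^{k}$ on a residue polydisk is not quite right. Writing $z=a+pv$ with $a$ a center of $D_j$, the rescaled map is $v\mapsto c_0+(f^{k})'(a)\,v+O(p)$ with $c_0=(\Phi^{k}(a)-a)/p\in\Z_p^g$, and $c_0$ need not vanish modulo $p$; thus modulo $p$ the map is a nontrivial translation rather than the identity, and it is not of the form $v\mapsto v+pH(v)$ with $H$ convergent on $\Z_p^g$. The standard interpolation lemma therefore does not apply verbatim: you must either pass to a further $p$-power of $\Psi$ (after which the map is congruent to the identity mod $p$ and one interpolates along each residue class of $n$), or invoke the quasiperiodicity and linearization results of Rivera-Letelier, which is what the paper does through Theorems~\ref{indifferent} and~\ref{avoiding attracting points}. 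Unlike the $t^2-1$ gap, this is only a matter of adjusting the exponent $k$.
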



The strategy used for proving the theorems above involves an
interplay between arithmetic geometry and $p$-adic dynamics, and
it is
based in part on a non-linear analog of the technique used by Skolem
\cite{Skolem} (and later extended by Mahler \cite{Mahler-2} and Lech
\cite{Lech}) to treat linear recurrence sequences.
However, unlike in the linear recurrence case (where all but finitely many
$p$-adic absolute values will work), finding a suitable prime $p$ is
far more difficult and involves using intersection theory on
$\bP^1\times \bP^1$ together with an application of the classical
Siegel's theorem (see Section~\ref{intersection theory}.)
A further complication is the problem of finding fixed points around
which the dynamics can be linearized; instead,
we invoke the work of
Rivera-Letelier \cite{Riv} from $p$-adic dynamics,
as described in Section~\ref{dynamics}.
More precisely, we find
arithmetic progression $\cS$ of integers such that there are infinitely
many $m \in \cS$ with $\Phi^m(\alpha)$ lying on $V$, and then we
construct a $p$-adic analytic map $\theta$ sending
$\cS$ into $\bA^g(\bC_p)$ such that
$\theta(m) = \Phi^m(x_1, \dots,x_g)$
for each integer $m$ in the sequence.  Then, for any polynomial
$F$ that vanishes on $V$, we have $F(\theta(k))=0$ for infinitely many $k$.
Since the zeros of a nonzero $p$-adic analytic function are isolated,
$F\circ \theta$ must vanish at {\it all} $k$ in the sequence.
Rivera-Letelier's results, which are used in the construction of $\theta$,
apply whenever there is a positive integer $\ell$ such that
$\varphi^\ell (x_i)$ is in a $p$-adic quasiperiodicity disk
for each $i$. (A quasiperiodicity disk is a periodic residue class on
which the derivative has absolute value equal to one;
see Section~\ref{dynamics} for a formal definition.)
However, one cannot expect every place to admit such an integer
$\ell$, but in our case the above mentioned diophantine techniques
can be used to show that at least one such place exists.

We note that the Skolem-Mahler-Lech technique has played a role in
other work done on this subject.  Bell's \cite{Bell} and Denis's
\cite{Denis-dynamical} work on automorphisms may be viewed as
algebro-geometric realizations of the Skolem-Mahler-Lech theorem.
 Evertse, Schlickewei, and Schmidt \cite{ESS} have
given a strong quantitative version of the Skolem-Mahler-Lech theorem.
It may be possible to use their result to give more precise versions
of the theorems of this paper.

%
We remark that Conjecture~\ref{dynamical M-L} has been proved
(cf. \cite{Mike}) by conceptually quite different methods in the
special case that $g=2$ and $V$ is a line in $\bA^2$.  However, the
methods used there (involving Ritt's classification for
functional decomposition of complex polynomials) do not appear to
work for more general subvarieties of affine space.


We exclude the case that the rational function $\varphi$ has
superattracting points because we have been unable, thus far, to
extend the method of Skolem-Mahler-Lech to this situation.
Although there is a
logarithm associated to a $\varphi$ (see \cite{GT-newlog}) in a
neighborhood of a superattracting point, it does not have the
required properties that
Rivera-Letelier's logarithms for quasiperiodicity disks
\cite{Riv} provide.
It should not be surprising that superattracting points
pose difficulties; while they are relatively simple
dynamically, they cause ramification issues in a
diophantine context.  In the cases of 
endomorphisms of semiabelian varieties
(see \cite{V1, Faltings, GT-newlog})
and of automorphisms of affine space (see \cite{Denis-dynamical, Bell}), the
underlying maps are {\'e}tale and hence have no ramification.  
In fact, it is possible to prove a very general dynamical
Mordell-Lang theorem for unramified maps (see \cite{BGT}) without using
the techniques from diophantine approximation that appear in Sections~4 and 6.  Thus, at this point, it seems the main obstacle to proving
Conjecture~1.1 is overcoming the difficulties that ramification
presents.
When
$\varphi$ has no superattracting points, the ramification indices of $\varphi^n$ remain bounded for all $n$; this fact plays an
important role in Section~\ref{intersection theory}.
However, when $\varphi$
has a superattracting point, these indices may become arbitrarily
large.  Hence,
the ramification of the iterates of
$\varphi$ is more complicated when $\varphi$ has a superattracting
point.


In general, we believe
that there should be a broader \emph{Mordell-Lang principle} which
holds for any sufficiently \emph{rigid} space $X$ (i.e. the space does
not have a large set of endomorphisms). This principle would say that
any definable subset of $X$ (in the sense of model
theory; for algebraic geometry, the definable sets are algebraic
varieties) intersects the orbit of a point $P\in X$ under an
endomorphism $\Phi$ of $X$ in at most finitely many orbits of the form
$\{\Phi^{nk+\ell}(P)\}_{n\ge 0}$, for some nonnegative integers $k$
and $\ell$. If $X$ is a semiabelian variety, the above principle can
be found at the heart of the classical Mordell-Lang conjecture (see
\cite{GT-newlog}). If $X$ is $\bA^g$ under the action of polynomial maps
$f_i$ on each coordinate, then we recover our
Conjecture~\ref{dynamical M-L}.
Note that in either case, $X$ has few
endomorphisms.  If $X$ is semiabelian, then $\End(X)$ is a finitely
generated integral extension of $\mathbb{Z}$.  Similarly, if $X$ is
$\bA^g$ under a coordinatewise polynomial action, then $(H_1,\dots,H_g)$ is an
endomorphism if and only if $H_i\circ f_i = f_i\circ H_i$ for each
$i$, which typically implies that $H_i$ and $f_i$ have a common
iterate.  (See the extensive work on this subject by Fatou
\cite{Fatou-1, Fatou-2}, Julia \cite{Julia}, Eremenko
\cite{Eremenko}, among many others).

The outline of our paper is as follows.
In Section~\ref{notation} we
introduce our notation.  Sections~\ref{dynamics} and
\ref{intersection theory}
provide necessary lemmas from $p$-adic
dynamics and from intersection theory on arithmetic surfaces.
In Section~\ref{sect:dml}, we prove
Theorem~\ref{curves}.  In
Section~\ref{sect: quadratic}, we
use the results of Section~\ref{sect:dml}
to prove Theorems~\ref{quadratic}, \ref{rationals}.
and~\ref{rationals-2}.
Finally, in Section~\ref{extensions to C}, we
describe the results of \cite{Medvedev/Scanlon}
and use them to deduce
Theorem~\ref{polynomials, arbitrary curves over C}.


\smallskip
\noindent{\bf Acknowledgements.}
 The authors would like to thank  A.~Medvedev, T.~Scanlon, L.~Szpiro, and
  M.~Zieve for helpful conversations.  
Research of R.~B. was partially supported by NSF Grant
  DMS-0600878, that of D.~G. by an NSERC grant, that of P.~K.  by grants from the G\"oran Gustafsson
  Foundation, the Royal Swedish Academy of Sciences, and the Swedish
  Research Council, and that of T.~T. by NSA Grant 06G-067.  

\section{Notation}
\label{notation}
  We write 
  $\N$ for the set of nonnegative integers.
If $K$ is a field, we write $\overline{K}$ for an
  algebraic closure of $K$.
Given a prime number $p$, the field $\Cp$ will denote the
completion of an algebraic closure $\overline{\bQ}_p$
of $\bQ_p$, the field of $p$-adic
rationals.
We denote by $\mid\cdot\mid:=\mid\cdot\mid_p$ the usual absolute value
on $\Cp$.
  Given $a\in\Cp$ and $r>0$,
we write $D(a,r)$ and $\Dbar(a,r)$
\label{diskpage}
for the open disk and closed disk (respectively) of radius
$r$ centered at $a$.


If $K$ is a number field, we let $\fo_K$ be its ring of algebraic
integers, and we fix an isomorphism $\pi$ between $\bP^1_K$ and the
generic fibre of $\bP^1_{\fo_K}$. For each
nonarchimedean place $v$ of $K$, we let
$k_v$ be the residue field of $K$ at $v$, and for each $x\in\bP^1(K)$,
we let $x_v:=r_v(x)$ be the intersection of the Zariski closure of
$\pi(x)$ with the fibre above $v$ of $\bP^1_{\fo_K}$.
(Intuitively, $x_v$ is $x$ modulo $v$.)
This map $r_v:\bP^1(K)\lra\bP^1(k_v)$ is the \emph{reduction map} at $v$.


If $\varphi:\bP^1\to\bP^1$
is a morphism defined over the field $K$,
then (fixing a choice of homogeneous coordinates)
there are relatively prime homogeneous polynomials $F,G\in K[X,Y]$
of the same degree $d=\deg\varphi$ such that
$\varphi([X,Y])=[F(X,Y):G(X,Y)]$.  
(In affine coordinates, $\varphi(t)=F(t,1)/G(t,1)\in K(t)$
is a rational function in one variable.)
Note that by our choice of coordinates,
$F$ and $G$ are uniquely defined up to a nonzero constant
multiple.
We will need the notion of good reduction of $\varphi$,
first introduced by Morton and Silverman in~\cite{MorSil1}.

\begin{definition}
\label{good reduction}
Let $K$ be a field, let $v$ be a nonarchimedean valuation on
$K$, let $\fO_v$ be the ring of $v$-adic integers of $K$,
and let $k_v$ be the residue field at $v$.
Let
$\varphi:\bP^1\lra\bP^1$ be a morphism over $K$,
given by $\varphi([X,Y])=[F(X,Y):G(X,Y)]$, where
$F,G\in \fO_v[X,Y]$ are relatively prime homogeneous
polynomials of the same degree
such that at least one coefficient
of $F$ or $G$ is a unit in $\fO_v$.
Let $\varphi_v :=[F_v,G_v]$, where
$F_v,G_v\in k_v[X,Y]$ are the reductions
of $F$ and $G$ modulo $v$.
We say that $\varphi$ has
{\em good reduction} at $v$ if
$\varphi_v:\bP^1(k_v)\lra\bP^1(k_v)$ is a morphism of the same
degree as $\varphi$.
\end{definition}
If $\varphi\in K[t]$ is a polynomial, we can give
the following elementary criterion for good reduction:
$\varphi$ has good reduction at $v$ if and only if all coefficients of $\varphi$ are
$v$-adic integers, and its leading coefficient is a $v$-adic unit.

\begin{definition}
\label{conjugated rational maps}
Two rational functions $\varphi$ and $\psi$ are {\em conjugate} if
there is a linear fractional transformation $\mu$ such that $\varphi =
\mu^{-1}\circ \psi \circ\mu$.
\end{definition}
In the above definition, if $\varphi$ and $\psi$ are polynomials,
then we may assume that $\mu$ is a polynomial of degree one.

\begin{definition}
If $K$ is a field, and $\varphi\in K(t)$ is a
rational function,
then $z\in\bP^1(\Kbar)$ is a {\em periodic point} for
$\varphi$ if there exists an integer $n\geq 1$ such that
$\varphi^n(z) = z$.
The smallest such integer $n$ is the {\em period}
of $z$, and $\lambda=(\varphi^n)'(z)$ is
the {\em multiplier} of $z$.
If $\lambda=0$, then $z$ is called {\em superattracting}.
If $|\cdot |_v$ is an absolute value on $K$,
and if $|\lambda|_v < 1$, then $z$ is called {\em attracting}.
\end{definition}

If $z$ is a periodic point of $\varphi=\mu^{-1}\circ\psi\circ\mu$,
then $\mu(z)$ is
a periodic point of $\psi$ with the same multiplier.
In particular, we can define the multiplier of a periodic point
at $z=\infty$ by changing coordinates.

Whether or not $z$ is periodic, 
we say $z$ is a {\em ramification point} or {\em critical point}
of $\varphi$ if $\varphi'(z)=0$.  If
$\varphi=\mu^{-1}\circ\psi\circ\mu$, then $z$ is a critical point
of $\varphi$ if and only if $\mu(z)$ is a critical point of $\psi$;
in particular, coordinate change can again be used to determine
whether $z=\infty$ is a critical point.
Note that a periodic point $z$ is superattracting if
and only if at least one of $z,\varphi(z),\varphi^2(z),\ldots,\varphi^{n-1}(z)$
is critical, where $n$ is the period of $z$.

Let $\varphi: V \lra V$ be a map from a variety to itself,
and let $z\in V(\Kbar)$.
The ({\em forward}) {\em orbit}
$\cO_{\varphi}(z)$ of $z$ under
$\varphi$ is the set $\{ \varphi^k(z)\text{ : }k\in\N\}$.
We say $z$ is \emph{preperiodic} if $\cO_{\varphi}(z)$ is finite.
If $\mu$ is an automorphism of $V$,
and if $\varphi=\mu^{-1}\circ\psi\circ\mu$,
note that
$\cO_{\varphi}(z) = \mu^{-1}(\cO_\psi(\mu(z)))$.

We say $z$ is \emph{exceptional}
(or \emph{totally invariant})
if there are only finitely many points $w$ such that
$z\in\cO_{\varphi}(w)$ (i.e. the backward orbit of $z$ contains only
finitely many points).
It is a classical result in dynamics (e.g., see \cite{beardon-book},
Theorem~4.1.2) that a 
morphism $\varphi:\bP^1\rightarrow\bP^1$
of degree larger than one has at most two exceptional points.  
Moreover, it has exactly two if and only if $\varphi$ is conjugate
to the map $t\mapsto t^n$, for some integer
$n\in\bZ$; and it has exactly one if and only if $\varphi$ is conjugate to a
polynomial but not to any map $t\mapsto t^n$.
In particular, $\varphi$ has at least one exceptional point
if and only if $\varphi^2$ is conjugate to a polynomial.
%
%
%

\section{Quasiperiodicity disks in $p$-adic dynamics}
\label{dynamics}

As in \cite{GT-newlog}, we will need a result on non-preperiodic points over
local fields.  
By an {\em open disk in $\PCp$}, we will mean either an open disk in
$\Cp$ or the complement (in $\PCp$) of a closed disk in $\Cp$.
Equivalently, an open disk in $\PCp$ is the image of an open disk
$D(0,r)\subseteq\Cp$ under a linear fractional transformation
$\gamma\in\PGL(2,\Cp)$.  Closed disks are defined similarly.

The following definition is borrowed from
\cite[Section~3.2]{Riv}, although we have used a simpler version
that suffices for our purposes.

\begin{definition}
\label{def:siegel}
Let $p$ be a prime, let $r>0$, let $\gamma\in\PGL(2,\Cp)$,
and let $U=\gamma(D(0,r))$.  Let $f:U\rightarrow U$ be
a function such that
$$\gamma^{-1}\circ f \circ \gamma(t) = \sum_{i\geq 0} c_i t^i
\in\Cp[[t]],$$
with $|c_0|<r$, $|c_1|=1$, and $|c_i|r^i \leq r$ for all $i\geq 1$.
Then we say $U$ is a {\em quasiperiodicity disk} for $f$.
\end{definition}

The conditions on $f$ in
Definition~\ref{def:siegel} mean precisely that
$f$ is rigid analytic and maps $U$ bijectively onto $U$.
In particular, the preperiodic
points of $f$ in $U$ are in fact periodic.
By \cite[Corollaire~3.12]{Riv},
our definition implies that $U$ is indeed
a quasiperiodicity domain of $f$
in the sense of \cite[D\'{e}finition~3.7]{Riv}.  

The main result of this section is the following.

\begin{theorem}
\label{indifferent}
  Let $p$ be a prime and $g\geq 1$.  For each $i=1,\ldots, g$,
  let $U_i$ be an open disk in $\PCp$, and let $f_i:U_i\to U_i$
  be a map for which $U_i$ is a quasiperiodicity disk.
  Let $\Phi$ denote
  the action of $f_1\times \cdots\times f_g$ on
  $U_1\times\cdots \times U_g$, let
  $\alpha=(x_1,\ldots,x_g)\in U_1\times\cdots\times U_g$ be a point,
  and let $\cO$ be the $\Phi$-orbit of $\alpha$.
  Let $V$ be a subvariety of $(\bP^1)^g$ defined over $\Cp$.
  Then $V(\Cp)\cap\cO$ is a union of at most finitely many
  orbits of the form $\{\Phi^{nk+\ell}(\alpha)\}_{n\geq 0}$
  for nonnegative integers $k$ and $\ell$.
\end{theorem}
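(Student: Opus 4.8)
The plan is to construct, for each coordinate $i$, a $p$-adic analytic parametrization of the orbit of $x_i$ under $f_i$, and then to reduce the intersection problem to the statement that a nonzero $p$-adic analytic function of one variable has only isolated zeros. First I would fix $i$ and work inside $U_i$. After conjugating by $\gamma_i$ I may assume $U_i = D(0,r)$ and that $f_i(t) = \sum_{j\geq 0} c_j t^j$ with $|c_0| < r$, $|c_1| = 1$, $|c_j| r^j \leq r$ for $j \geq 1$; thus $f_i$ is a rigid-analytic bijection of $U_i$ onto itself. By \cite[Corollaire~3.12]{Riv} (or directly from \cite[D\'efinition~3.7]{Riv} and the preceding remarks), $U_i$ is a quasiperiodicity domain, so there is a positive integer $k_i$ and a rigid-analytic map $T_i \colon \bZ_p \times U_i \to U_i$ with $T_i(0, z) = z$, $T_i(1,z) = f_i^{k_i}(z)$, and $T_i(s + s', z) = T_i(s, T_i(s', z))$; concretely, $f_i^{k_i}$ is analytically conjugate, after passing to a suitable subdisk, to a map close enough to the identity that its iterates interpolate $p$-adically. (Replacing $k_i$ by the least common multiple $k$ of the $k_1,\dots,k_g$ costs nothing.) Then for each residue $\ell$ with $0 \le \ell < k$, the function
\[
\theta_{i,\ell} \colon \bZ_p \to U_i, \qquad \theta_{i,\ell}(s) = T_i\bigl(s, f_i^{\ell}(x_i)\bigr)
\]
is $p$-adic analytic and satisfies $\theta_{i,\ell}(n) = f_i^{nk+\ell}(x_i)$ for all $n \in \N$.

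Next I would assemble these into $\Theta_\ell = (\theta_{1,\ell}, \dots, \theta_{g,\ell}) \colon \bZ_p \to U_1 \times \cdots \times U_g$, so that $\Theta_\ell(n) = \Phi^{nk+\ell}(\alpha)$ for $n \in \N$. Now suppose $V \subset (\bP^1)^g$ is a subvariety over $\Cp$, cut out (at least near the relevant disks, in the affine or suitable projective coordinates given by the $\gamma_i$) by finitely many polynomials $F_1, \dots, F_m$. Fix a residue class $\ell \bmod k$ and consider the analytic functions $G_{\ell,t}(s) = F_t(\Theta_\ell(s))$ on $\bZ_p$, for $t = 1, \dots, m$. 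The set of $n \in \N$ with $\Phi^{nk+\ell}(\alpha) \in V(\Cp)$ is exactly $\{ n : G_{\ell,t}(n) = 0 \text{ for all } t\}$. For each $t$, either $G_{\ell,t}$ is identically zero on $\bZ_p$, or it is a nonzero analytic function on the compact set $\bZ_p$ and hence has only finitely many zeros. Therefore, for each $\ell$, the set of such $n$ is either all of $\N$ (when every $G_{\ell,t}$ vanishes identically, which forces $\Theta_\ell(\N) \subseteq V$, giving the full orbit $\{\Phi^{nk+\ell}(\alpha)\}_{n\ge 0}$) or finite (contributing finitely many singleton orbits, i.e. orbits with $k=0$ in the notation of the statement). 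Taking the union over the $k$ residue classes $\ell$ yields that $V(\Cp) \cap \cO$ is a finite union of orbits of the asserted form.

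The step I expect to be the main obstacle is the construction of the interpolating family $T_i$ and the verification that it is genuinely analytic in $s \in \bZ_p$ on the whole disk $U_i$, with the interpolation identity $T_i(n,z) = f_i^{nk_i}(z)$ holding for all $n$. This is exactly where the quasiperiodicity hypothesis — $|c_1| = 1$ and the growth bound $|c_j| r^j \le r$ — is used: it guarantees that after restricting to an appropriate subdisk the relevant iterate is congruent to the identity to high enough $p$-adic precision that the classical $p$-adic interpolation of iterates (in the spirit of the Skolem--Mahler--Lech argument, here in its nonlinear $p$-adic form) applies, and Rivera-Letelier's results \cite{Riv} supply precisely this. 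A secondary technical point is bookkeeping at $\infty$ and across the charts given by the $\gamma_i$: one must choose local coordinates in which both the $U_i$ look like standard disks and the defining equations of $V$ become honest power series; since $V$ is a closed subvariety of $(\bP^1)^g$ this is routine, handled coordinate by coordinate, but it should be stated carefully. Once $T_i$ is in hand, the remainder is the soft isolated-zeros argument sketched above.
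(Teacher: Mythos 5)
Your proposal follows essentially the same route as the paper: parametrize the orbit along each arithmetic progression $nk+\ell$ by a $p$-adic analytic function coming from Rivera-Letelier's quasiperiodicity theory, then conclude from the fact that a nonzero convergent power series has only finitely many zeros, with residue classes meeting $V$ finitely often contributing singleton orbits and the others contributing whole sub-orbits. The one point of divergence is the input you take from \cite{Riv}: you posit a global analytic flow $T_i\colon \bZ_p\times U_i\to U_i$ interpolating the iterates of $f_i^{k_i}$ on the entire disk, which is stronger than what \cite[Proposition~3.16(2)]{Riv} is cited for and stronger than what you actually use (you only evaluate $T_i$ at the points $f_i^{\ell}(x_i)$); the paper instead isolates exactly the needed local statement in Lemma~\ref{lem:siegel} --- near a non-periodic point, $f_i^{k_i}$ is analytically conjugate to the translation $t\mapsto t+k_i$ on a small closed disk, and this chart is transported along the orbit via $h_{\ell}=f_i^{\ell}\circ h_0$ --- with periodic coordinates handled separately by constant parametrizations, which is the version you would want to substitute for your flow to make the citation airtight.
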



The proof of Theorem~\ref{indifferent} relies on
the following lemma from $p$-adic dynamics,
which in turn follows from the theory
of quasiperiodicity domains in 
\cite[Section~3.2]{Riv}.

\begin{lemma}
\label{lem:siegel}
Let $U\subseteq\Cp$ be an open disk, let
$f:U\rightarrow U$
be a map for which $U$ is a quasiperiodicity disk,
and let $x\in U$ be a non-periodic point.
Then there exist an integer $k\geq 1$,
radii $r>0$ and $s\geq |k|_p$,
and, for every integer $\ell\geq 0$,
a bijective rigid analytic function
$h_{\ell}:\Dbar(0,s)\rightarrow \Dbar(f^{\ell}(x),r)$,
with the following properties:
\begin{enumerate}
\item $h_{\ell}(0)=f^{\ell}(x)$, and
\item for all $z\in \Dbar(f^{\ell}(x),r)$ and $n\geq 0$, we have
$$f^{nk}(z) = h_{\ell}(nk+ h_{\ell}^{-1}(z)).$$
\end{enumerate}
\end{lemma}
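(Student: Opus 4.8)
The plan is to extract the lemma directly from the quasiperiodicity theory of Rivera-Letelier, so the work is mostly one of unwinding definitions and checking radii. First I would reduce to the case $\gamma = \mathrm{id}$, i.e. $U = D(0,r_0)$ for some $r_0 > 0$ and $f(t) = \sum_{i\geq 0} c_i t^i$ with $|c_0| < r_0$, $|c_1| = 1$, $|c_i| r_0^i \leq r_0$; this is harmless since all statements are invariant under conjugation by $\gamma \in \mathrm{PGL}(2,\Cp)$, and the point $x$, being non-periodic for $f$, corresponds to a non-periodic point for the conjugated map. By \cite[Corollaire~3.12]{Riv} (as noted after Definition~\ref{def:siegel}), $U$ is a quasiperiodicity domain for $f$ in the sense of \cite[D\'{e}finition~3.7]{Riv}. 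The key structural input from \cite[Section~3.2]{Riv} is that on such a domain $f$ is \emph{quasiperiodic}: there is an integer $k \geq 1$ (a ``quasi-period'') and a rigid analytic action of the additive group on $U$ — more precisely, after shrinking to a suitable closed subdisk $\Dbar(x,r)$ around $x$, the orbit of $x$ under $f^k$ lies on a one-parameter rigid analytic family, and $f^k$ acts on this disk as translation by $k$ in a suitable analytic coordinate. I would cite the relevant result of Rivera-Letelier that produces, for the non-periodic point $x$, an integer $k$ and a rigid analytic ``flow'' or linearizing chart; the non-periodicity of $x$ is exactly what guarantees that this chart is injective on a disk of positive radius rather than collapsing.

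Next I would set up the functions $h_\ell$. Having the linearizing coordinate near $x$ gives a bijective rigid analytic $h_0 \colon \Dbar(0,s) \to \Dbar(x,r)$ with $h_0(0) = x$ and $f^{k}(h_0(w)) = h_0(w + k)$ for all $w \in \Dbar(0,s)$ with $w + k$ still in the domain; iterating gives property~(ii) for $\ell = 0$, namely $f^{nk}(z) = h_0(nk + h_0^{-1}(z))$. For general $\ell$, I would simply transport this chart forward by $f^\ell$: since $f$ is a bijective rigid analytic self-map of $U$ (this is precisely the content of the conditions in Definition~\ref{def:siegel}), $f^\ell$ carries $\Dbar(x,r)$ bijectively and rigid-analytically onto a disk, which one checks is $\Dbar(f^\ell(x),r)$ — here one uses that $f$ preserves the ``radius'' structure because $|c_1| = 1$ and the other coefficients are suitably bounded, so $f$ is an isometry in the relevant sense on each residue-class disk. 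Then define $h_\ell := f^\ell \circ h_0$; it satisfies $h_\ell(0) = f^\ell(x)$, and the conjugation relation $f^{nk} \circ f^\ell = f^\ell \circ f^{nk}$ turns property~(ii) for $h_0$ into property~(ii) for $h_\ell$.

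Finally I would address the two quantitative constraints: $r > 0$ and $s \geq |k|_p$. That $r$ may be taken strictly positive is the crucial non-triviality and comes from the quasiperiodicity domain structure together with $x$ being non-periodic — one does \emph{not} expect a positive radius if $x$ were periodic, since then the ``flow line'' through $x$ is a single point. The bound $s \geq |k|_p$ is needed so that the argument $nk + h_\ell^{-1}(z)$ actually lands in $\Dbar(0,s)$: since $|h_\ell^{-1}(z)| \leq s$ and $|nk|_p \leq |k|_p \leq s$, the ultrametric inequality gives $|nk + h_\ell^{-1}(z)| \leq s$, so $h_\ell$ can indeed be evaluated there; I would either cite that Rivera-Letelier's construction already yields such an $s$, or shrink/enlarge the linearizing disk to arrange it, noting that the torsion of the construction is controlled by $|k|_p$. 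The main obstacle is thus the faithful translation of the statements in \cite[Section~3.2]{Riv} — which are phrased in terms of quasiperiodicity domains and their quasi-periods — into the clean ``each forward iterate $f^\ell(x)$ sits in a uniformly-sized disk on which $f^{k}$ is analytically conjugate to the shift by $k$'' form used here; everything else is bookkeeping with the ultrametric and the bijectivity of $f$ on $U$.
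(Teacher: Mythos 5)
Your proposal is correct and follows essentially the same route as the paper: invoke Rivera-Letelier's quasiperiodicity theory (the paper cites \cite[Proposition~3.16(2)]{Riv}) to get an integer $k$ and a chart $h_0$ near the non-periodic point $x$ conjugating $f^k$ to $t\mapsto t+k$ on a disk with $s\geq |k|_p$, then transport it by setting $h_\ell:=f^\ell\circ h_0$, using that $f^\ell$ maps $\Dbar(x,r)$ bijectively onto $\Dbar(f^\ell(x),r)$. The only cosmetic difference is that the paper adds a final translation of $h_\ell$ to force $h_\ell(0)=f^\ell(x)$, which you absorb into the initial chart.
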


\begin{proof}
Write $U=D(a,R)$.
By \cite[Proposition~3.16(2)]{Riv}, there is an integer $k\geq 1$
and a neighborhood $U_x\subseteq U$ of $x$ on which $f^k$
is (analytically and bijectively) conjugate to $t\mapsto t+k$.  That is,
there are radii $r,s>0$ (with $r< R$ and $s\geq |k|_p$)
and a bijective analytic
function $h_{0}:\Dbar(0,s)\rightarrow \Dbar(x,r)$
such that $f^{nk}(z) = h_0(nk+ h_0^{-1}(z))$
for all $z\in \Dbar(x,r)$ and $n\geq 0$.

For each nonnegative integer $\ell$, note that $f^{\ell}$ is a bijective
analytic function from $\Dbar(x,r)$ onto $\Dbar(f^{\ell}(x),r)$.
Thus, if we let $h_{\ell} := f^{\ell}\circ h_0$,
then $h_{\ell}$ is a bijective analytic function
from $\Dbar(0,s)$ onto $\Dbar(f^{\ell}(x),r)$.  Moreover,
for all $z\in \Dbar(f^{\ell}(x),r)$, if we let
$\zeta = f^{-\ell}(z)\in \Dbar(x,r)$, then for every $n\ge 0$,
$$f^{nk}(z) = f^{\ell}(f^{nk}(\zeta))
= f^{\ell}(h_0(nk + h_0^{-1}(\zeta)))
= h_{\ell}(nk + h_{\ell}^{-1}(z)).$$
Finally, replacing 
$h_{\ell}(z)$ by $h_{\ell}(z + h_{\ell}^{-1}(f^{\ell}(x)))$,
we can also ensure that $h_{\ell}(0)=f^{\ell}(x)$.
\end{proof}

We are now ready to prove Theorem~\ref{indifferent}.
\begin{proof}[Proof of Theorem~\ref{indifferent}]
By applying linear fractional transformations $\gamma_i$ to each
$U_i$, we may assume without loss of generality that each $U_i$ is an open
disk in $\Cp$. 

For each $i=1,\ldots, g$, consider the $f_i$-orbit of $x_i$.
If $x_i$ is periodic, let $k_i\geq 1$ denote its period,
and for every $\ell\geq 0$, define the
power series $h_{i,\ell}$ to be the constant $f_i^{\ell}(x_i)$.
Otherwise, choose
$k_i\geq 1$ and radii $r_i,s_i>0$
according to Lemma~\ref{lem:siegel}, along with
the associated conjugating maps $h_{i,\ell}$
for each $\ell\ge 0$.

Let $k=\lcm(k_1,\ldots,k_g)\geq 1$.
For each $\ell\in\{0,\ldots,k-1\}$ such that
$V(\Cp)\cap \cO_{\Phi^k}(\Phi^{\ell}(\alpha))$ is finite,
we can cover $V(\Cp)\cap \cO_{\Phi^k}(\Phi^{\ell}(\alpha))$
by finitely many singleton orbits. 

It remains to consider those $\ell\in\{0,\ldots,k-1\}$ for which
there is an infinite set $\cN$ of nonnegative integers $n$ such that
$\Phi^{nk+\ell}(\alpha)\in V(\Cp)$. We will show that in fact,
$\Phi^{nk+\ell}(\alpha)\in V(\Cp)$ for \emph{all} $n\in\N$.

For any $|z|\leq 1,$ note that $kz\in\Dbar(0,s_i)$ for
all $i=1,\ldots,g$.  Thus, it makes sense to
define $\theta:\Dbar(0,1)\rightarrow U_1\times\cdots\times U_g$ by
$$\theta(z) = (h_{1,\ell}(kz), \ldots, h_{g,\ell}(kz));$$
Then for all $n\geq 0$, we have
$$\theta(n) = \Phi^{nk+\ell}(\alpha),$$
because for each $i=1,\ldots,g$, we have $k_i|k$, and therefore
$$h_{i,\ell}(nk)=h_{i,\ell}(nk + h_{i,\ell}^{-1}(f_i^{\ell}(x_i)))
=f_i^{nk}(f_i^{\ell}(x_i))=f_i^{nk+\ell}(x_i).$$

Given any polynomial $F$ vanishing on $V$,
the composition
$F\circ \theta$ is a convergent
power series on $\Dbar(0,1)$
that vanishes at all integers in $\cN$.
However, a nonzero convergent power series can have only
finitely many zeros in $\Dbar(0,1)$;
see, for example, \cite[Section~6.2.1]{Robert}.  Thus,
$F\circ \theta$ is identically zero.  Therefore,
$$F(\Phi^{nk+\ell}(\alpha))=F(\theta(n))=0$$
for all $n\geq 0$, not just $n\in\cN$.
This is true for all such $F$, and therefore
$\cO_{\Phi^k}(\Phi^{\ell}(\alpha))\subseteq V(\Cp)$.

The conclusion of Theorem~\ref{indifferent}
now follows, because  $\cO$ is the finite union of the orbits
$\cO_{\Phi^k}(\Phi^{\ell}(\alpha))$ for $0\le \ell\le k-1$.
\end{proof}

As an immediate corollary, we have the following result, which proves
Conjecture~\ref{dynamical M-L} in the case that $\Phi$ is defined over
$\oQ$ and there is a nonarchimedean place
$v$ with the following property:
for each $i$, the rational
function $f_i$
has good reduction at $v$, and $\cO_{f_i}(x_i)$ avoids all $v$-adic
attracting periodic points.

\begin{theorem}
\label{avoiding attracting points}
Let $V$ be
a subvariety of $\left(\bP^1\right)^g$ defined over $\Cp$,
let $f_1,\dots, f_g\in\Cp(t)$ be rational functions
  of good reduction on
  $\bP^1$, and let $\Phi$ denote the coordinatewise action of
  $(f_1,\dots,f_g)$ on $\left(\bP^1\right)^g$. 
  Let $\cO$ be the $\Phi$-orbit of a point
  $\alpha=(x_1,\ldots,x_g)\in (\PCp)^g$,
  and suppose that for
  each $i$, the orbit $\cO_{f_i}(x_i)$ does not intersect the residue
  class of any attracting $f_i$-periodic point.
  Then $V(\Cp)\cap\cO$ is a union
of at most finitely many orbits of the form
$\{\Phi^{nk+\ell}(\alpha)\}_{n\geq 0}$ for
nonnegative integers $k$ and $\ell$.
\end{theorem}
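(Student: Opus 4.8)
The plan is to deduce this theorem from Theorem~\ref{indifferent} by showing that, under the stated hypotheses, each orbit $\cO_{f_i}(x_i)$ eventually lands in a quasiperiodicity disk for $f_i$. Concretely, first I would fix $i$ and analyze the dynamics of $f_i$ on $\PCp$ using the good-reduction hypothesis. Since $f_i$ has good reduction at the place associated to $\Cp$, the reduction map $\bP^1(\Cp)\to\bP^1(k_v)$ (where $k_v$ is the residue field) commutes with $f_i$ and its reduction $\bar f_i$; moreover on each residue disk $f_i$ maps into a single residue disk, and the residue disks are permuted compatibly with the action of $\bar f_i$ on $\bP^1(k_v)$. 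Because $\bP^1(k_v)$ may itself be infinite (the residue field of $\Cp$ is $\overline{\bF_p}$), I would instead work inside a number field or a finite extension of $\bQ_p$ over which $f_i$ and $x_i$ are defined, so that the relevant residue field is finite; then the $\bar f_i$-orbit of $\overline{x_i}$ in $\bP^1(k_v)$ is eventually periodic. Pulling this back, there is an $\ell_i\ge 0$ such that $f_i^{\ell_i}(x_i)$ lies in a residue disk $U_i$ that is periodic under $f_i$, say of period $m_i$, i.e. $f_i^{m_i}(U_i)=U_i$.

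The key step is then to show that $U_i$ is a quasiperiodicity disk for $g_i:=f_i^{m_i}$ in the sense of Definition~\ref{def:siegel}, and this is precisely where the hypothesis about avoiding attracting periodic points enters. On the periodic residue disk $U_i$, good reduction forces $g_i\colon U_i\to U_i$ to be a bijective rigid-analytic self-map whose linear term has absolute value $\le 1$ (the reduction of the self-map of the residue disk $\cong D(0,1)$ is an automorphism of $\bP^1(k_v)$ fixing the relevant residue class, hence at worst an affine map, and the non-vanishing of the reduced derivative together with good reduction bounds all coefficients appropriately). The only remaining possibility to exclude is $|c_1|<1$, i.e. that the derivative at a fixed point of $g_i$ in $U_i$ has absolute value strictly less than one — but any fixed point of $g_i$ in $U_i$ is a periodic point of $f_i$, and such a point would then be $v$-adically attracting, and $\cO_{f_i}(x_i)$ passes through the residue class $U_i$ of that point, contradicting the hypothesis. (If $g_i$ has no fixed point in $U_i$ one must argue slightly differently, but in the good-reduction setting a bijective analytic self-map of a disk with bounded coefficients always has a fixed point, or one passes to a further iterate; in any case the coefficient condition $|c_1|=1$ follows once attracting behavior is ruled out.) Hence $U_i$ is a quasiperiodicity disk for $g_i=f_i^{m_i}$.

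Finally I would assemble the pieces: set $\ell=\max_i \ell_i$ and $m=\operatorname{lcm}_i m_i$, so that after applying $\Phi^\ell$ each coordinate $f_i^\ell(x_i)$ lies in a disk that is a quasiperiodicity disk for $f_i^m$. Applying Theorem~\ref{indifferent} to the maps $f_i^m$ on these disks, the point $\Phi^\ell(\alpha)$, and the subvariety $V$, we conclude that $V(\Cp)\cap\cO_{\Phi^m}(\Phi^\ell(\alpha))$ is a finite union of orbits of the form $\{(\Phi^m)^{nk'+\ell'}(\Phi^\ell(\alpha))\}_{n\ge0}$, which are orbits of the required form for the original $\Phi$; adding in the finitely many points $\Phi^0(\alpha),\dots,\Phi^{\ell-1}(\alpha)$ as singleton orbits finishes the proof. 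The main obstacle is the middle step: carefully translating "good reduction plus no attracting periodic point in the orbit" into the precise analytic estimates $|c_1|=1$ and $|c_i|r^i\le r$ of Definition~\ref{def:siegel}, and handling the bookkeeping of passing to a finite extension so that residue fields are finite and orbits in them are genuinely eventually periodic.
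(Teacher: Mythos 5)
Your overall strategy coincides with the paper's: reduce modulo the maximal ideal, use finiteness of the reduced orbit to replace $\alpha$ and $\Phi$ by $\Phi^m(\alpha)$ and an iterate of $\Phi$ so that each coordinate lies in a residue class fixed by the (iterated) map, verify that these classes are quasiperiodicity disks, and then apply Theorem~\ref{indifferent}. Where you differ is the middle step: the paper simply invokes \cite[Proposition~4.32]{Riv}, which says precisely that a periodic residue class of a good-reduction map containing no attracting periodic point is a quasiperiodicity disk, whereas you propose to re-derive this dichotomy by hand. Your sketch is essentially sound but stated in the wrong logical order: good reduction alone does \emph{not} force $g_i=f_i^{m_i}$ to be bijective on the fixed class (consider $t\mapsto t^2$ on the class of $0$); bijectivity is part of the dichotomy you are proving. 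Writing $g_i$ on the class as $\sum_j c_j t^j$ with $|c_j|\le 1$ and $|c_0|<1$, the correct argument is: if $|c_1|<1$, the Newton polygon of $g_i(t)-t$ produces a fixed point $z_0$ in the class with $|g_i'(z_0)|<1$, i.e.\ an attracting periodic point of $f_i$ whose residue class meets the orbit, contradicting the hypothesis; if $|c_1|=1$, injectivity (hence bijectivity onto the class) and the coefficient bounds of Definition~\ref{def:siegel} follow immediately, so your worry about a fixed-point-free case does not in fact arise. One more small repair: you cannot in general assume $f_i$ and $x_i$ are defined over a finite extension of $\Q_p$, since elements of $\Cp$ need not be algebraic over $\Q_p$; but that detour is unnecessary, because the reductions of the finitely many coefficients of $f_i$ and of $x_i$ lie in $\overline{\F}_p$ and hence generate a finite field, so the reduced orbit is automatically finite, which is all the paper's proof uses. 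With these adjustments your argument is a correct, self-contained substitute for the citation of Rivera-Letelier, at the cost of reproving a special case of his result.
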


\begin{proof}
For each $i$, the reduction $r_p(x_i)\in\bP^1(\overline{\mathbb{F}_p})$ is
preperiodic under the reduced map $(f_i)_p$.  Replacing
$\alpha$ by $\Phi^m(\alpha)$ for some $m\geq 0$,
and replacing $\Phi$ by $\Phi^j$ for some $j\geq 1$, then,
we may assume that for each $i$, the residue class $U_i$
of $x_i$ is mapped to itself by $f_i$.  By hypothesis,
there are no attracting periodic points in those
residue classes; thus, by \cite[Proposition~4.32]{Riv}
(for example), $U_i$ is a quasiperiodicity disk for $f_i$.
Theorem~\ref{indifferent} now yields the desired conclusion.
\end{proof}

\section{Preliminary results on intersection theory}
\label{intersection theory}

In this section we prove the following result
on intersection theory for
arithmetic surfaces.  It will be used in the proofs of our main
results in Section~\ref{sect:dml}.

\begin{theorem}\label{same v}
Let $K$ be a number field,
and let $\varphi: \bP^1_K \lra \bP^1_K$ be a
morphism defined over $K$ of degree at least $2$
that is not conjugate to a map of
the form $t \mapsto t^n$ for any integer $n$.
Suppose
$\varphi$ does not have any superattracting periodic points
other than exceptional points.

  Let $\alpha,\beta\in\bP^1(K)$ be points that are not
  preperiodic for $\varphi$.  Suppose that there is a curve $C
  \subseteq \bP^1 \times \bP^1$ such that there are infinitely many
  integers $k\geq 0$ for which $(\varphi^k(\alpha),\varphi^k(\beta)) \in C(K)$.
  Then there are infinitely many finite places $v$ of $K$ such that
  $\varphi$ has good reduction at $v$ and such that
  for some integer $n\geq 1$, the points
  $\varphi^n(\alpha)$ and $\varphi^n(\beta)$ are
  in the same residue class at the place $v$; i.e.,
  $r_v(\varphi^n(\alpha))=r_v(\varphi^n(\beta))$.
\end{theorem}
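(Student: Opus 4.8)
The plan is to produce the desired primes $v$ by studying, for each iterate $k$, the arithmetic intersection of the ``orbit curve'' $\Gamma_k = \{(\varphi^k(\alpha),\varphi^k(\beta))\}$ — or rather, of the graph-like data $(\varphi^k,\varphi^k)$ restricted to the diagonal point $(\alpha,\beta)$ — with the curve $C$ and with the diagonal $\Delta \subset \bP^1\times\bP^1$. Concretely, set $P_k := (\varphi^k(\alpha),\varphi^k(\beta)) \in (\bP^1\times\bP^1)(K)$; by hypothesis $P_k \in C$ for infinitely many $k$. If for all but finitely many places $v$ of good reduction we had $r_v(\varphi^n(\alpha)) \neq r_v(\varphi^n(\beta))$ for every $n\ge 1$, then $P_n$ would reduce to a point off the diagonal at every such $v$, for every $n$. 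I want to derive a contradiction from this, by showing that the sequence of points $P_k$ lying on $C$ must ``approach the diagonal'' at some place, in the sense of sharing a residue class.

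The key tool is Siegel's theorem on integral points, fed by an intersection-theoretic height computation. First I would reduce to a place $v_0$ of good reduction for $\varphi$ (all but finitely many) and to an irreducible component $C_0$ of $C$ containing infinitely many of the $P_k$. Because $\alpha,\beta$ are not preperiodic, the heights $h(\varphi^k(\alpha))$ and $h(\varphi^k(\beta))$ grow like $d^k$ up to bounded error (using that $\varphi$ has no superattracting periodic points other than exceptional ones, so the canonical height $\hhat_\varphi$ is a genuine height and does not degenerate — and crucially, as noted in the introduction, the ramification indices of $\varphi^n$ stay bounded, which keeps the local contributions and the arithmetic of the situation under control). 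An intersection computation on $\bP^1\times\bP^1$ — computing the intersection number of the Zariski closure of $C_0$ with the horizontal and vertical rulings, and comparing with $d^k$ — shows that $C_0$ must have bidegree $(a,b)$ with, say, a definite relation forced between $a,b$ and the degrees; and it shows $C_0$ cannot be contained in $\Delta$ (else $\varphi^k(\alpha)=\varphi^k(\beta)$ for infinitely many $k$, forcing $\alpha=\beta$ by injectivity-type arguments, contradicting non-preperiodicity in the way used elsewhere, or at least collapsing to a trivial case). Now restrict to $C_0 \setminus \Delta$: this is an affine curve, and the coordinate functions of $\varphi^k(\alpha),\varphi^k(\beta)$ give points on it. If these points were $S$-integral for a fixed finite set $S$ of places (which is exactly what ``never sharing a residue class at any good-reduction place outside $S$, and behaving well at the bad and archimedean places'' would give, via a standard translation of ``reductions stay distinct'' into ``the difference of coordinates is an $S$-unit-like quantity''), then Siegel's theorem would bound the number of such points — contradicting infinitude. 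Hence for infinitely many good places $v$ the reductions of $\varphi^k(\alpha)$ and $\varphi^k(\beta)$ must coincide for some $k$ (after possibly replacing $k$ by a shift $n$ so that the relevant point is actually in the residue class, using good reduction to push $\varphi$ around the fiber).

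More carefully, the mechanism is: the quantity measuring ``$P_k$ lies off $\Delta$ at $v$'' is essentially $v$-adic size of a cross-ratio or of $x_1 - x_2$ in affine coordinates where $x_1 = \varphi^k(\alpha)$, $x_2 = \varphi^k(\beta)$; summing over all places the product formula relates this to heights, and the intersection-theory input controls the contribution from the archimedean places and the finitely many bad places. If only finitely many good places ever see $r_v(x_1) = r_v(x_2)$, then $x_1 - x_2$ (suitably normalized, using the embedding $C_0 \setminus \Delta \hookrightarrow \bA^N$) is an $S$-integer on the curve $C_0\setminus\Delta$ for a fixed $S$; Siegel then forces finitely many such points on the curve, because $C_0\setminus\Delta$ has at least three points at infinity relative to the relevant completion — or is of positive genus — the standard hypothesis, which holds here because $C_0$ is not a line in ``good position'' (a case one treats separately using the explicit structure, as is done for the line case elsewhere in the literature cited). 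This contradiction yields infinitely many good places $v$ at which $r_v(\varphi^{k}(\alpha)) = r_v(\varphi^k(\beta))$ for some $k = k(v)$; writing $n = k(v)$ gives the theorem.

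The main obstacle I expect is the precise bookkeeping that converts ``$r_v(\varphi^k(\alpha)) \neq r_v(\varphi^k(\beta))$ for all good $v \notin S$ and all $k$'' into a genuine $S$-integrality statement on a fixed affine curve to which Siegel's theorem applies — in particular, handling the points of $C_0$ on the diagonal (which must be blown up or excised), handling possible collisions of $\varphi^k(\alpha)$ or $\varphi^k(\beta)$ with $\infty$ or with the finitely many ``bad'' points, and dealing with the degenerate case where $C_0$ is a line (or more generally has too few points at infinity), which requires a separate argument exploiting the dynamics of $\varphi$ directly. The intersection-number estimate on $\bP^1\times\bP^1$ (bounding the bidegree of $C_0$ and showing $C_0 \not\subseteq \Delta$) and the use of bounded ramification of the iterates $\varphi^n$ to control local heights are the technical heart that makes the Siegel input applicable.
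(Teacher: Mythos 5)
Your overall frame is right -- translate ``the reductions of $\varphi^n(\alpha)$ and $\varphi^n(\beta)$ never agree at good places outside $S$'' into ``$\Phi^n(\alpha,\beta)$ is $S$-integral for the diagonal $\Delta$'' and then contradict the infinitude of points of the orbit on $C$ via Siegel's theorem. But the step where you apply Siegel to $C_0\setminus\Delta$ is a genuine gap, and you essentially flag it yourself without resolving it: Siegel gives finiteness of $S$-integral points only when the affine curve has positive genus or at least three points at infinity, and there is no reason the (geometrically irreducible, possibly rational) curve $C_0$ meets $\Delta$ in three distinct points. For instance a $(1,1)$-curve can meet $\Delta$ in one or two points, so $C_0\setminus\Delta$ can be $\bG_m$ or $\bA^1$, where $S$-integral points may well be infinite; your suggestion that this degenerate case ``requires a separate argument exploiting the dynamics'' is exactly the missing content, and it is the technical heart of the paper's proof. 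The paper resolves it by not working with $\Delta$ itself: it pulls the diagonal back, setting $X_n=(\Phi^n)^*(\Delta)$ with $\Phi=(\varphi,\varphi)$, transfers the integrality of $\Phi^k(\alpha,\beta)$ for $\Delta$ to integrality of $\Phi^{k-n}(\alpha,\beta)$ for $X_n$ (using good reduction outside $S$), and replaces $C$ by an irreducible curve $Z\subseteq(\Phi^n)^{-1}(C)$ containing infinitely many of these points. The hypothesis that no nonexceptional critical point is periodic is then used quantitatively: the ramification indices of all iterates $\varphi^n$ are bounded by a single constant $M$ (product of $e(Q_i/\varphi(Q_i))$ over critical points), every component of $X_n$ occurs with multiplicity one, and hence any given point lies on at most $M$ of the ``new'' divisors $Y_m=(\Phi^m)^*\Delta-(\Phi^{m-1})^*\Delta$. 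Taking $n\geq 2M$ forces $Z$ to meet $X_n$ in at least three distinct points, which is precisely what makes Siegel applicable. Your proposal has no substitute for this mechanism; heuristics about heights growing like $d^k$ and bidegree relations for $C_0$ do not produce the three-points-at-infinity condition.

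A second, smaller omission: when $\varphi$ has an exceptional point it is (after conjugation) a polynomial, and then the point at infinity \emph{is} a periodic critical point, so the uniform ramification bound for iterates fails on $\bP^1\times\bP^1$ (the index at infinity is $d^n$). The paper treats this case separately by passing to $\bP^2$ with the map $\Phi_f([x:y:z])=[F(x,z):F(y,z):z^d]$ and the divisor $D=\{x=y\}$, checking by hand that the points of $(\Phi_f^n)^*(D)$ on the line at infinity have multiplicity one. Your argument, which invokes bounded ramification of $\varphi^n$ wholesale, would need this (or some equivalent device) to cover the polynomial case allowed by the hypotheses.
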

The condition on superattracting points is equivalent to stipulating
that the nonexceptional critical points of $\varphi$ are not periodic.
Note that $\varphi$ has at most one exceptional point, since it is not
conjugate to $t\mapsto t^n$.

By \cite[Proposition~4.2]{MorSil2}, $\varphi$ has good reduction
at all but finitely many places $v$ of $K$.
(See Section~\ref{notation} for a discussion of good reduction
and the reduction map $r_v$.)
Thus, the content
of Theorem~\ref{same v} is the common reduction of $\varphi^n(\alpha)$
and $\varphi^n(\beta)$.

Before proving the Theorem, we set some notation.  Let $V$ be a
variety over a number field $K$, and let $\cV$ be a model for $V$ over
the ring of integers $\fo_K$ of $K$.  Let $S$ be a finite set of
places of $K$ that contains all of the archimedean places of $K$, and
let $Z$ be an effective divisor on $V$.  We say that a point $\gamma$
on $V$ is {\em $S$-integral for $Z$} if the Zariski closure of
$\gamma$ does not meet the Zariski closure of $\Supp Z$ in $\cV$ at
any fibres of $\cV$ outside of $S$.

More specifically, let $\cV$ be the model
$\bP^1_{\fo_K}\times\bP^1_{\fo_K}$ for $\bP^1_K\times\bP^1_K$ that
comes from the isomorphism between $\bP^1_K$ and the generic fibre of
$\bP^1_{\fo_K}$ we chose in Section~\ref{notation}.  We will say that
a point $Q$ on $\bP^1_K \times \bP^1_K$ is \emph{$S$-integral} for a
divisor $Z$ if it is $S$-integral for $Z$ with respect to
$\cV$.

Let 
$$\Phi : \bP^1 \times \bP^1 \lra \bP^1 \times \bP^1$$
be the map
$\Phi = \varphi \times \varphi$, and let $\Delta$ denote the diagonal
divisor on $\bP^1 \times \bP^1$.
We will need the following proposition.

\begin{proposition}\label{integral}
  Let $\varphi: \bP_K^1 \lra \bP_K^1$ be a rational map of degree $d >
  1$ that has no periodic critical points.  Let $\alpha$ and $\beta$
  be points in $\bP^1(K)$ that are not preperiodic for $\varphi$, and
  let $S$ be a finite set of places of $K$ that contains all of
  the archimedean places of $K$.  Let $C$ be a curve in $\bP^1 \times
  \bP^1$.  Then there are at most finitely many
  integers $k\geq 0$ that satisfy both of
  the following conditions:
\begin{enumerate}
\item $\Phi^k(\alpha, \beta) \in C$; and
\item $\Phi^k(\alpha, \beta)$ is $S$-integral for $\Delta.$
\end{enumerate}
\end{proposition}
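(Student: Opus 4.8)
The plan is to argue by contradiction: suppose there are infinitely many integers $k\geq 0$ satisfying both (i) $\Phi^k(\alpha,\beta)\in C$ and (ii) $\Phi^k(\alpha,\beta)$ is $S$-integral for $\Delta$. Condition (i) forces the points $(\varphi^k(\alpha),\varphi^k(\beta))$ to accumulate on a curve, so by passing to an infinite subsequence we may assume they all lie on a single irreducible component $C_0$ of $C$. The heart of the argument is to apply Siegel's theorem on integral points. To do this I would first dispose of the degenerate cases. If $C_0$ is vertical or horizontal (say $C_0 = \{a\}\times\bP^1$), then condition (i) says $\varphi^k(\alpha)=a$ for infinitely many $k$, which contradicts $\alpha$ not being preperiodic. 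If $C_0$ is the diagonal $\Delta$ itself, then condition (ii) fails outright (the point lies on $\Supp\Delta$) for all but finitely many $k$ unless the Zariski closures fail to meet, which they do not; so this case is also excluded. Hence we may assume $C_0$ is neither vertical, horizontal, nor equal to $\Delta$, so $C_0\cap\Delta$ is a nonempty finite set of points, none of which is the generic point of $C_0$.

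Next I would normalize $C_0$: let $\widetilde{C_0}$ be the smooth projective model of the function field of $C_0$, with normalization map $\nu:\widetilde{C_0}\to C_0\subseteq\bP^1\times\bP^1$. Pull back $\Delta$ to an effective divisor $\widetilde\Delta = \nu^*(\Delta|_{C_0})$ on $\widetilde{C_0}$, which is nonzero since $C_0\neq\Delta$ and $C_0\not\subseteq\Delta$. Each point $P_k := (\varphi^k(\alpha),\varphi^k(\beta))$ with $k$ in our infinite set lifts (after possibly enlarging $K$ to a fixed finite extension $K'$, which is harmless) to a $K'$-point $\widetilde{P_k}$ of $\widetilde{C_0}$, and condition (ii) says precisely that $\widetilde{P_k}$ is $S'$-integral for $\widetilde\Delta$ on a suitable model of $\widetilde{C_0}$ over $\fo_{K'}$, for a finite set $S'$ of places. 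By Siegel's theorem (in the strong form valid on curves of any genus with at least one point removed, i.e. for $(\widetilde{C_0},\widetilde\Delta)$ with $\deg\widetilde\Delta\geq 1$), the set of such $S'$-integral points is finite --- unless $\widetilde{C_0}\cong\bP^1$ and $\widetilde\Delta$ has degree $\leq 2$, say $\widetilde\Delta$ supported on one or two points. In that surviving case, after a coordinate change $C_0$ is a rational curve and the integrality of $\widetilde{P_k}$ away from $S'$ translates into a statement about a linear recurrence / $S'$-unit equation.

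The main obstacle is therefore exactly this exceptional case, $\widetilde{C_0}\cong\bP^1$ with $\widetilde\Delta$ of degree $\le 2$: here Siegel alone does not give finiteness, and one must exploit the dynamics. In that case $C_0$ is parametrized by $\bP^1$, and the first coordinate gives a nonconstant rational map $\bP^1\to\bP^1$; pulling back, there is a rational function $\psi$ (a Ritt/commuting-type relation between $\varphi$ on the two factors restricted to $C_0$) and the condition that infinitely many orbit points $\varphi^k(\alpha)$ satisfy an integrality relation with respect to the (at most two) points lying over $\Delta\cap C_0$. I would then invoke the hypothesis that $\varphi$ has \emph{no periodic critical points}: this is what rules out the bad configurations. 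Concretely, if the preimage in $\bP^1$ (under the parametrization) of $\Delta\cap C_0$ were a totally invariant or small orbit, it would force $\varphi$ to have an exceptional point coinciding with a critical point or a periodic critical point, contradicting the hypothesis; otherwise the set of $k$ for which $\varphi^k(\alpha)$ is $S'$-integral relative to a non-exceptional finite set is finite, by the standard dynamical argument (the one behind results of Silverman and others on integral points in orbits, which uses Siegel again but now applied to the preimages $\varphi^{-n}$ of $\Delta\cap C_0$, whose size grows because the branch points are not periodic and the ramification is controlled). This last piece --- showing that the no-periodic-critical-points hypothesis exactly excludes the exceptional Siegel case and letting one conclude finiteness of the relevant set of $k$ --- is where the real work lies, and I expect the proof in the paper to handle it by a careful bookkeeping of how $\Delta\cap C_0$ and its $\varphi$-preimages sit relative to the critical orbit.
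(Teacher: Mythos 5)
Your first step (direct Siegel on an irreducible component $C_0$ of $C$, after discarding vertical/horizontal components and the case $C_0=\Delta$) is fine, and you correctly isolate the genuinely hard case: $C_0$ rational with $\nu^*\Delta$ supported at one or two points --- e.g.\ the graph of a M\"obius transformation, which meets $\Delta$ in at most two points. But for that case you offer only a gesture, and the gesture does not work as stated. Since $C_0$ need not be $\Phi$-invariant, there is no ``Ritt/commuting-type relation'' between $\varphi$ and the parametrization of $C_0$, so the problem does not reduce to an integrality statement about a single orbit of $\varphi$ relative to a fixed finite set; consequently Silverman-style results on integral points in orbits do not apply directly, and your proposed dichotomy (``totally invariant or small orbit forces an exceptional/periodic critical point, otherwise the standard dynamical argument applies'') is neither established nor a correct reduction: for a generic $(1,1)$-curve the set $C_0\cap\Delta$ is just one or two points with no invariance properties, yet the case must still be handled. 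In short, the place where you say ``the real work lies'' is exactly where your proof stops.

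The paper's mechanism, which you would need to supply, avoids any case split and works uniformly: enlarge $S$ to include the places of bad reduction, pull the diagonal back to $X_n=(\Phi^n)^*(\Delta)$, and observe that $S$-integrality of $\Phi^k(\alpha,\beta)$ for $\Delta$ gives $S$-integrality of $\Phi^{k-n}(\alpha,\beta)$ for $X_n$; these points lie on some $K$-irreducible (hence, having infinitely many $K$-points, geometrically irreducible) component $Z$ of $(\Phi^n)^{-1}(C)$, which dominates both factors because $\alpha,\beta$ are not preperiodic. The hypothesis that $\varphi$ has no periodic critical points enters through Lemma~\ref{ramlem} (uniform bound $M$ on the ramification of all iterates $\varphi^n$) combined with Lemma~\ref{needed ramification} (the multiplicity of any point of $X_n$ is at most the maximal ramification index), yielding Lemma~\ref{mult}: any point lies on at most $M$ of the pieces $Y_i=X_i-X_{i-1}$. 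Since $Z$ meets every $Y_i$ (positive intersection numbers on $\bP^1\times\bP^1$), taking $n\geq 2M$ forces $Z$ to meet $X_n$ in at least three distinct points, and then Siegel's theorem applied to $Z$ relative to $X_n$ gives the contradiction --- even when $Z$ is rational. None of this bookkeeping (bounded ramification of iterates, multiplicity-one components of $X_n$, the count guaranteeing three distinct intersection points, and the transfer of $S$-integrality under pullback) appears in your proposal, so as written it proves the proposition only when $C_0$ has genus at least one or meets $\Delta$ in at least three points of its normalization.
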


For any nonconstant morphism $h:\bP^1\to \bP^1$
and any point $x$ on $\bP^1$,
we will denote 
the ramification index of $x$ over $h(x)$ by $e(x/h(x))$.

We will need the following result about ramification.
\begin{lemma}
\label{needed ramification}
Let $h:\bP^1\lra \bP^1$ be a nonconstant morphism defined over a field
$K$ of characteristic $0$, and let $H:=(h,h)$ its action
coordinatewise on $\bP^1\times\bP^1$. Then:
\begin{enumerate}
\item
For any point $(P,Q)\in\bP^1(K)\times \bP^1(K)$, the
multiplicity of $\Delta_H:=H^*(\Delta)$ at $(P,Q)$ is at most
$\max_{x\in\bP^1} e(x/h(x))$.
\item
Each irreducible component
of $\Delta_H$ has multiplicity one.
\end{enumerate}
\end{lemma}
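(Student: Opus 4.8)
The plan is to analyze the preimage divisor $\Delta_H = H^*(\Delta)$ locally, using the fact that $H = (h,h)$ splits the computation coordinate-by-coordinate. The key observation is that $(P,Q) \in \Delta_H$ (i.e., $H(P,Q) \in \Delta$) means $h(P) = h(Q) =: R$. So I would work in local analytic (or formal) coordinates near $R$ on the target $\bP^1$, pull back the local equation of $\Delta$ near $(R,R)$, and express everything in terms of uniformizers at $P$ and $Q$.

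\textbf{Setup for both parts.} Let $(P,Q)$ be a point of $\Delta_H$, so $h(P)=h(Q)=R$. Choose a local coordinate $w$ at $R$ on $\bP^1$, so that $\Delta$ near $(R,R)$ is cut out by $w_1 - w_2$ (where $w_i$ is the coordinate $w$ on the $i$-th factor). Choose local coordinates $u$ at $P$ and $v$ at $Q$. Write $a := e(P/R)$ and $b := e(Q/R)$ for the ramification indices; then in suitable local coordinates $h$ looks like $w = u^a \cdot (\text{unit})$ near $P$ and $w = v^b \cdot (\text{unit})$ near $Q$. Thus $H^*(w_1 - w_2) = u^a\cdot \epsilon_1(u) - v^b \cdot \epsilon_2(v)$ for units $\epsilon_1, \epsilon_2$ in the local rings.

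\textbf{Part (i): multiplicity bound.} The multiplicity of $\Delta_H$ at $(P,Q)$ is the order of vanishing of $H^*(w_1-w_2) = u^a \epsilon_1(u) - v^b \epsilon_2(v)$ at the point $u=v=0$ in the local ring $\cO_{\bP^1\times\bP^1, (P,Q)}$ — more precisely, since $\Delta_H$ is a divisor (a curve on the surface), its multiplicity at $(P,Q)$ is the largest $m$ such that $H^*(w_1-w_2)$ lies in $\mathfrak{m}_{(P,Q)}^m$, where $\mathfrak m_{(P,Q)} = (u,v)$. Since $u^a \epsilon_1(u) \in (u)^a \subseteq \mathfrak m^a$ and similarly $v^b \epsilon_2(v) \in \mathfrak m^b$, the difference lies in $\mathfrak m^{\min(a,b)}$; hence the multiplicity is at most $\min(a,b) \le \max_{x} e(x/h(x))$, as required. (One should note the multiplicity is exactly $\min(a,b)$ generically, but only the bound is needed.)

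\textbf{Part (ii): reduced components.} Here I would argue that no irreducible component of $\Delta_H$ appears with multiplicity $\ge 2$. The cleanest route is a generic-point argument: an irreducible component $Z$ of $\Delta_H$ dominates, via $H$, some irreducible component of $\Delta$, i.e. (identifying $\Delta$ with $\bP^1$) a curve in $\bP^1\times\bP^1$ mapping onto $\bP^1$. At the generic point $\eta$ of $Z$, the images $h(P_\eta) = h(Q_\eta)$ give a point of $\bP^1$ of transcendence degree $1$ over $K$; in particular the local rings at $\eta$ on each factor are étale over the target along $Z$ — concretely, in part (i)'s computation, taking $(P,Q)$ to be the generic point of $Z$, at least one of the ramification indices $a$, $b$ equals $1$ because the ramification locus of $h$ is a proper closed subset (finite set of points) of $\bP^1$, while $h(P_\eta) = h(Q_\eta)$ ranges over a whole curve's worth of points, so for all but finitely many components the target point is unramified, giving $a = b = 1$. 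When $a=1$, $H^*(w_1-w_2) = u\,\epsilon_1(u) - v^b\epsilon_2(v)$ has a term of order exactly $1$ in $u$, so it is not in $\mathfrak m_\eta^2$; hence $Z$ appears with multiplicity $1$. The finitely many "bad" components (those mapping into the image of a ramification point) must be handled separately: but a component $Z$ is a \emph{curve}, and if it maps into the finite set $h(\{\text{critical points}\})$ under $w\mapsto h(P_\eta)$, then $h(P_\eta)$ is constant on $Z$, forcing both $P_\eta$ and $Q_\eta$ to have $h$-image a \emph{single} point $R_0$; then $Z$ is a component of $h^{-1}(R_0)\times h^{-1}(R_0)$, which is a finite set of points, contradicting $\dim Z = 1$. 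So in fact every component of $\Delta_H$ is of the "good" unramified type, and part (ii) follows.

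\textbf{Main obstacle.} The delicate point is part (ii): making rigorous the claim that an irreducible \emph{curve} component $Z$ of $\Delta_H$ cannot be "captured" by the ramification. The argument above — that the $h$-image would have to be constant along $Z$ hence $Z$ would be zero-dimensional — works, but one must be careful that $Z$ genuinely dominates a component of $\Delta$ rather than, say, being contracted; since $H|_Z : Z \to \Delta$ with $Z$ a curve and $\Delta \cong \bP^1$, the map is either dominant or constant, and constancy is exactly what leads to the contradiction. A secondary subtlety is choosing local coordinates so that $h$ has the clean monomial form $w = u^a\cdot(\text{unit})$; this is standard in characteristic $0$ (the hypothesis $\car K = 0$ is used precisely here, so that ramification is tame and such normal forms exist), and I would cite it rather than reprove it.
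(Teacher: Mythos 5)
Your proposal is correct and follows essentially the same route as the paper: expand $h$ locally at $P$ and at $Q$, observe that the local equation of $\Delta_H$ is a difference of a series in $u$ of order $e(P/h(P))$ and a series in $v$ of order $e(Q/h(Q))$, read off that the point-multiplicity is $\min$ of the two indices, and then use finiteness of the critical locus of $h$ (characteristic $0$) to conclude the components are reduced; the paper phrases part~(ii) as ``only finitely many points of $\bP^1\times\bP^1$ can have multiplicity $>1$, while a component has infinitely many points,'' which is the same finiteness your generic-point/dominance argument exploits. One small slip in your part~(i): from $u^a\epsilon_1(u)-v^b\epsilon_2(v)\in\mathfrak m^{\min(a,b)}$ you conclude the multiplicity is \emph{at most} $\min(a,b)$, but membership in $\mathfrak m^{\min(a,b)}$ only gives a \emph{lower} bound; the upper bound (in fact equality, as in the paper's citation of Shafarevich) follows because the lowest-order terms $u^a\epsilon_1(0)$ and $v^b\epsilon_2(0)$ are monomials in distinct variables and cannot cancel, so the local equation is not in $\mathfrak m^{\min(a,b)+1}$ --- a one-line fix you should state explicitly.
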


\begin{proof}
By performing the same change of coordinates on both copies of $\bP^1$, we may assume that the point at infinity is not among the points $P,h(P),Q,h(Q)$. Hence, let $t_0,u_0\in K$ such that $P=[t_0:1]$, and $Q=[u_0:1]$.
Then $h$ has a local power series expansion (see
\cite[II.2]{Shaf}) in a neighborhood of $P$ as
$h(t) = a_0 + \sum_{i=e_1}^\infty a_i (t-t_0)^i$
and in a neighborhood of $Q$ as
$h(u) = b_0 + \sum_{i=e_2}^\infty b_i (u-u_0)^i$, where
$a_i, b_i\in K$, 
and $e_1\ge 1$ and $e_2\ge 1$ are the ramification indices of
$h$ at $P$ and $Q$, respectively. Clearly, $(P,Q)\in \Delta_H$ if and only if $a_0=b_0$.
Thus, we may assume $a_0=b_0$, and so, near $(P,Q)$, the
subvariety $\Delta_H$ is defined by the equation
$$ \sum_{i=e_1}^\infty a_i (t-t_0)^i - \sum_{i=e_2}^\infty b_i (u-u_0)^i=0.$$
The multiplicity of $(P,Q)$ as a point on $\Delta_H$ is therefore given by
$\min(e_1, e_2)$ (see \cite[IV.1]{Shaf}); 
since $e_1,e_2 \leq \max_{x\in\bP^1}e(x/h(x))$, statement~(i) follows.

Moreover, $\Delta_H$ has
multiplicity more than one at $(P,Q)$ only if $h$ is ramified at both
$P$ and $Q$.
Because $h$ is ramified at only finitely many points of $\bP^1$
(note that $\car(K)=0$),
there are at most finitely many points
$(P,Q)\in\bP^1\times\bP^1$ at
which $\Delta_H$ has multiplicity larger than one,
proving statement~(ii).
\end{proof}

We set more notation, as follows.
For each $n \geq 0$, let $X_n$ be the divisor $(\Phi^n)^{*} (\Delta)$.
Note that $\Delta \subseteq X_n$ for each $n$. Therefore, more
generally, for each $0\leq m < n$, we have $X_m \subseteq X_n$.
Let $Y_0:= X_0 = \Delta$, and for $n\ge 1$, let
$$ Y_n =  (\Phi^n)^{*} (\Delta ) - 
(\Phi^{n-1})^{*} (\Delta )  .$$ 
Then we have
$$
X_n = \bigcup_{i=0}^n Y_i.
$$
Note that $Y_n$ is nonempty because $\deg(\Phi)>1$.
Furthermore, by Lemma~\ref{needed ramification}, 
each irreducible component of $X_n $, and hence of $Y_n$,
has multiplicity one.

We also have the following important result, giving a uniform
bound for the ramification of $\varphi^n$.

\begin{lemma}\label{ramlem}
Let $\varphi:\bP^1\lra\bP^1$ be a map which has no periodic critical points. Let $Q_1,\dots,Q_m$ be the ramification points of $\varphi$.  Then for
any $n$ and any point $P$ on $\bP^1$,
\begin{equation}
\label{eq:ramlem}
e(P/\varphi^n(P)) \leq \prod_{i=1}^m e(Q_i/\varphi(Q_i)).
\end{equation}
\end{lemma}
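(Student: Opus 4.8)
The plan is to bound the ramification index $e(P/\varphi^n(P))$ by iterating the chain rule for ramification indices and using the hypothesis that no critical point is periodic. Recall that for a composition of nonconstant morphisms, ramification indices multiply: for any $P \in \bP^1$,
$$e(P/\varphi^n(P)) = \prod_{j=0}^{n-1} e\bigl(\varphi^j(P)\,/\,\varphi^{j+1}(P)\bigr).$$
Each factor $e(\varphi^j(P)/\varphi^{j+1}(P))$ is $1$ unless $\varphi^j(P)$ is one of the critical points $Q_1,\dots,Q_m$ of $\varphi$, in which case it equals $e(Q_i/\varphi(Q_i))$ for the appropriate $i$. So the product on the right is really a product over those indices $j \in \{0,1,\dots,n-1\}$ with $\varphi^j(P) \in \{Q_1,\dots,Q_m\}$, each contributing the factor $e(Q_i/\varphi(Q_i)) \ge 2$ attached to that critical point.

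First I would fix $P$ and consider, for each critical point $Q_i$, the set $N_i = \{\, j \in \{0,\dots,n-1\} : \varphi^j(P) = Q_i \,\}$. The key observation is that $|N_i| \le 1$: if $\varphi^{j}(P) = \varphi^{j'}(P) = Q_i$ with $j < j'$, then $\varphi^{j'-j}(Q_i) = Q_i$, so $Q_i$ would be a periodic point of $\varphi$, and since $Q_i$ is a critical point this contradicts the hypothesis that $\varphi$ has no periodic critical points. (One must also handle the degenerate possibility that two \emph{distinct} critical points $Q_i, Q_{i'}$ both occur in the orbit of $P$; that is fine — they are counted by separate factors — but one should be slightly careful that a single index $j$ is assigned to only one $Q_i$, which is automatic since $\varphi^j(P)$ is a single point.) Consequently, in the product above each factor $e(Q_i/\varphi(Q_i))$ appears at most once, so
$$e(P/\varphi^n(P)) = \prod_{i=1}^m e\bigl(Q_i/\varphi(Q_i)\bigr)^{|N_i|} \le \prod_{i=1}^m e\bigl(Q_i/\varphi(Q_i)\bigr),$$
which is exactly \eqref{eq:ramlem}.

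The only genuine input is the no-periodic-critical-points hypothesis, used precisely to rule out a critical point recurring in the orbit $P, \varphi(P), \varphi^2(P), \dots$; everything else is the multiplicativity of ramification indices in characteristic $0$, which comes from the local power-series description of $\varphi$ used already in the proof of Lemma~\ref{needed ramification}. The main point to be careful about is the bookkeeping: one wants to make sure that a critical point contributes its full factor $e(Q_i/\varphi(Q_i))$ rather than just $2$, and that no factor is double-counted — both of which follow once $|N_i| \le 1$ is established. I do not expect any serious obstacle; the argument is a short combinatorial consequence of the multiplicativity formula together with the periodicity exclusion.
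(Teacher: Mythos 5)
Your argument is correct and is essentially the same as the paper's: both use the multiplicativity $e(P/\varphi^n(P))=\prod_{j=0}^{n-1}e(\varphi^j(P)/\varphi^{j+1}(P))$, note that a factor exceeds $1$ only when $\varphi^j(P)$ is some critical point $Q_i$, and observe that each $Q_i$ can occur at most once in the orbit of $P$ because otherwise $Q_i$ would be a periodic critical point. Your write-up merely makes the bookkeeping (the sets $N_i$ and $|N_i|\le 1$) more explicit than the paper does.
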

\begin{proof}
For each integer $i=1,\ldots, m$, there is at most one $j\geq 0$ such
that $\varphi^j(P) = Q_i$, since none of the $Q_i$ are periodic.
Meanwhile,
$e(\varphi^j(P)/\varphi^{j+1}(P))$ equals $1$
for all $j\geq 0$ such that $\varphi^j(P)$ is
not a ramification point.  Thus,
\[
e(P/\varphi^n(P)) = \prod_{j=0}^{n-1}
e(\varphi^j(P)/\varphi^{j+1}(P))
\leq \prod_{i=1}^m e(Q_i/\varphi(Q_i)).
\qedhere
\]
\end{proof}

Combining Lemmas~\ref{needed ramification} and \ref{ramlem}
gives the following result.
\begin{lemma}\label{mult}
Under the hypothesis of Lemma~\ref{ramlem} and with the above notation for $X_n$ and $Y_n$, there is a constant $M\geq 0$ such that for any point
$Q \in \bP^1(\bK) \times \bP^1(\bK)$,
at most $M$ of the $Y_n$ contain $Q$.  
\end{lemma}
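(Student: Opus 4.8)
The plan is to combine the pointwise multiplicity bound from Lemma~\ref{needed ramification}(i) with the uniform ramification bound from Lemma~\ref{ramlem}, and then exploit the nesting structure $X_0 \subseteq X_1 \subseteq \cdots$ together with the fact (noted just before Lemma~\ref{ramlem}) that each $Y_n$ has all components of multiplicity one.

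First I would fix a point $Q \in \bP^1(\bK) \times \bP^1(\bK)$ and ask how many of the divisors $Y_n$ pass through $Q$. Since $X_n = \bigcup_{i=0}^n Y_i$ and $Y_n = (\Phi^n)^*(\Delta) - (\Phi^{n-1})^*(\Delta)$ as effective divisors, the number of indices $n$ with $Q \in Y_n$ is bounded by the multiplicity of $Q$ in $X_N = (\Phi^N)^*(\Delta)$ for $N$ large — indeed, because the $Y_i$ together make up $X_N$ and each $Y_i$ has multiplicity one along each of its components, each index $n \le N$ with $Q \in Y_n$ contributes at least $1$ to $\operatorname{mult}_Q X_N$. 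So it suffices to bound $\operatorname{mult}_Q\bigl((\Phi^N)^*(\Delta)\bigr)$ independently of $N$ and $Q$.

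Now $\Phi^N = \Psi$ where $\Psi = (\varphi^N, \varphi^N)$, so $(\Phi^N)^*(\Delta) = \Delta_\Psi$ in the notation of Lemma~\ref{needed ramification} applied to $h = \varphi^N$. By Lemma~\ref{needed ramification}(i), for $Q = (P_1, P_2)$ we have
$$\operatorname{mult}_Q\bigl((\Phi^N)^*(\Delta)\bigr) \le \max_{x \in \bP^1} e(x/\varphi^N(x)).$$
By Lemma~\ref{ramlem} — whose hypothesis that $\varphi$ has no periodic critical points is exactly our standing hypothesis — this maximum is bounded by $\prod_{i=1}^m e(Q_i/\varphi(Q_i))$, a quantity depending only on $\varphi$, not on $N$. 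Setting $M := \prod_{i=1}^m e(Q_i/\varphi(Q_i))$ gives the claim: at most $M$ of the $Y_n$ contain any given $Q$.

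The one point requiring care — and the only place I expect any friction — is justifying the passage from "$Q$ lies on $Y_n$ for $k$ distinct values of $n \le N$" to "$\operatorname{mult}_Q X_N \ge k$". This follows because $X_N = \sum_{i=0}^N Y_i$ as effective cycles (the $Y_i$ are defined precisely as successive differences, so their sum telescopes to $(\Phi^N)^*\Delta$), and local multiplicity is additive over effective cycles; each $Y_n$ through $Q$ contributes at least $1$ since its components have multiplicity one, so $\operatorname{mult}_Q X_N \ge \#\{n \le N : Q \in Y_n\}$. Taking $N \to \infty$ and using the $N$-independent bound above finishes the proof.
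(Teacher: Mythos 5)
Your argument is correct and is essentially the paper's own proof: take $M=\prod_{i=1}^m e(Q_i/\varphi(Q_i))$, note that each $Y_n$ through $Q$ contributes at least $1$ to $\operatorname{mult}_Q X_N$, and bound $\operatorname{mult}_Q\bigl((\Phi^N)^*\Delta\bigr)$ independently of $N$ via Lemma~\ref{needed ramification}(i) applied to $h=\varphi^N$ together with Lemma~\ref{ramlem}. The paper phrases this as a contradiction ($M+1$ divisors through $Q$ would force multiplicity at least $M+1$ on some $X_n$), but the content is identical.
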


\begin{proof}
Let $M$ be the quantity on the right hand side of \eqref{eq:ramlem}.
If a point $Q$ is contained in $M+1$ different $Y_i$, then the
multiplicity of $Q$ on $X_n$ is at least $M+1$ for some
large enough $n$.
Lemma~\ref{ramlem} and
part~(i) of Lemma~\ref{needed ramification} 
now give a contradiction.
\end{proof}

We are now ready to prove  Proposition~\ref{integral}.
\begin{proof}[Proof of Proposition~\ref{integral}]
  Enlarge $S$ if necessary to contain not only all
  archimedean places of $K$ but also all
  places of bad reduction for $\Phi$.
  Fix an integer $n \geq 2M$, where $M$ is as in
  Lemma~\ref{mult}.

  Given an irreducible curve $E$ in
  $\bP^1 \times \bP^1$ that does not map to a point under either of the
  projection maps on $\bP^1 \times \bP^1$, we claim that
  $E$ intersects $X_n$ in at least three distinct points.
  Indeed, $E$ must meet $Y_m$
  in at least one point for all $m \geq 0$.
  However, for each point $Q$,
  at most $M$ of the $Y_m$ (for $0\le m\le n$) contain $Q$.
  Hence, $E$ must intersect $X_n$ in at least three
  distinct points, as desired.

  Now suppose that there are infinitely many $k$ (and hence infinitely
  many $k>n$) such that $\Phi^k(\alpha,\beta) \in C$ and
  $\Phi^k(\alpha,\beta)$ is $S$-integral for $\Delta$.  For each such
  $k>n$, then, $\Phi^{k - n}(\alpha,\beta)$ is $S$-integral for $X_n$.
  Then there is a $K$-irreducible curve $Z$ in $(\Phi^n)^{-1}(C)$ such
  that there are infinitely many $m$ for which $\Phi^m(\alpha,\beta)
  \in Z$ and $\Phi^m(\alpha,\beta)$ is $S$-integral for $X_n$.
  Because $\alpha$ and $\beta$ are not preperiodic, $Z$ does not
  project to a single point on either of the two coordinates of
  $\bP^1\times \bP^1$.  In addition, $Z$ contains infinitely many
  $K$-rational points $\Phi^m(\alpha,\beta)$; because it is also
  irreducible over $K$, it is in fact geometrically irreducible (note
  that if a component of $Z$ defined over a finite extension of $K$
  has infinitely many $K$-rational points, then it is in fact defined
  over $K$.)

  Thus, by our claim, $X_n$ meets $Z$
  in at least three points.  However,
  $Z$ contains infinitely many points $\Phi^m(\alpha,\beta)$
  that are $S$-integral for $X_n$; this
  is impossible, by Siegel's theorem on integral points.
\end{proof}

We treat the case of polynomials (which {\em do} have a periodic critical
point) slightly differently.  For a polynomial $f(t) = \sum_{i=0}^d
a_i t^i$ with $a_d \not = 0$, we define its homogenization $F(t,u)$ by
$F(t,u) = \sum_{i=0}^d a_i t^i u^{d-i}$.  We then define
$\Phi_f:\bP^2 \lra \bP^2$ by
$$ \Phi_f([x:y:z]) = [F(x,z):F(y,z):z^d].$$
Let $D$ be the divisor on $\bP^2$ consisting of all points $[x:y:z]$
such that $x = y$.  The divisor $D$ will play the same role here that
the diagonal $\Delta$ played on $\bP^1 \times \bP^1$.  We let 
$$ A_n =  (\Phi_f^n)^* (D),
\quad \text{and} \quad
B_n = (\Phi_f^n)^* (D) - (\Phi_f^{n-1})^* (D).$$
Then 
$$ A_n = \bigcup_{i=0}^n B_i.$$
Let $\cW$ be the model $\bP^2_{\fo_K}$ for $\bP^2_K$.  We will say
that a point $Q$ on $\bP^2_K$ is $S$-integral for a divisor
$Z$ if it is $S$-integral for $Z$ with respect to
$\cW$.

\begin{proposition}\label{poly}
  Let $f \in K[t]$ be a polynomial with no periodic critical points
  other than the point at infinity.  Let $\alpha$ and $\beta$ be
  points in $\bA^1(K)$ that are not preperiodic for $f$, and let $S$ be
  a finite set of places of $K$ that contains all of the archimedean
  places.  Let $C$ be a curve in $\bP^2$.  Then there are at most
  finitely many $k$ that satisfy both of the following conditions:
\begin{enumerate}
\item $\Phi_f^k([\alpha: \beta:1]) \in C$; and
\item $\Phi_f^k([\alpha: \beta:1])$ is $S$-integral for $D$.
\end{enumerate}
\end{proposition}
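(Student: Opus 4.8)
The plan is to mimic the proof of Proposition~\ref{integral}, but working on $\bP^2$ with the divisor $D$ in place of $\bP^1\times\bP^1$ with $\Delta$. The essential point is that, away from the superattracting fixed point at infinity, the polynomial $f$ behaves like a rational map with no periodic critical points, so the ramification of its iterates is bounded, and the counting argument based on Siegel's theorem goes through. First I would enlarge $S$ to contain all places of bad reduction for $\Phi_f$. The main structural task is to establish the analog of Lemma~\ref{mult}: there is a constant $M$ so that any point $Q\in\bP^2(\bK)$ lies on at most $M$ of the divisors $B_n$. To do this, I would run the analogs of Lemmas~\ref{needed ramification} and \ref{ramlem} on $\bP^2$: the multiplicity of $(\Phi_f)^*(D)=D_{\Phi_f}$ at a point $[t_0:u_0:1]$ with $t_0=u_0$ is $\min(e_1,e_2)$ where $e_1,e_2$ are the ramification indices of $f$ at $t_0,u_0$ (a local power-series computation identical to the one in Lemma~\ref{needed ramification}, since near an affine point $\Phi_f$ is just $(f,f)$), and each component of $D_{\Phi_f}$ has multiplicity one because the locus where $f$ ramifies at both coordinates is finite. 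Since $f$ has no \emph{finite} periodic critical points, Lemma~\ref{ramlem} applies to bound $e(P/f^n(P))$ for all finite $P$ by $\prod_i e(Q_i/f(Q_i))$ over the finite ramification points $Q_i$; combined with the multiplicity-one statement this yields the desired uniform bound $M$ on how many $B_n$ can pass through any point.

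The one genuinely new feature is the point at infinity and the line $z=0$, which is superattracting for $\Phi_f$ — this is where I expect the main obstacle. The iterated preimages $(\Phi_f^n)^*(D)$ may contain components lying in the line at infinity, or their multiplicities along $\{z=0\}$ may grow, since $e(\infty/f(\infty))=d>1$ and $\infty$ is periodic (fixed). So the clean statement "each component of $B_n$ has multiplicity one" and "each $Q$ lies on boundedly many $B_n$" can fail for points/components on the line at infinity. The way around this is that it does not matter for the argument: the curves $Z$ that arise in the proof are those containing infinitely many of the $K$-rational points $\Phi_f^m([\alpha:\beta:1])$, and since $\alpha,\beta$ are finite and not preperiodic, all these points lie in $\bA^2$, so $Z$ is not contained in the line at infinity and does not project to a point on either affine coordinate. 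Restricting everything to the affine chart $z=1$, where $\Phi_f$ is literally $(f,f)$ and $D$ is literally the diagonal $\Delta\subset\bA^2$, the ramification bounds above are exactly those of the rational-map case. Thus I would phrase Lemma~\ref{mult}'s analog as: there is $M\ge 0$ such that any point $Q\in\bA^2(\bK)$ lies on at most $M$ of the $B_n$ — which suffices.

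With that in hand the endgame is verbatim the proof of Proposition~\ref{integral}. Fix $n\ge 2M$. Any irreducible curve $E\subset\bP^2$ not contained in $\{z=0\}$ and not mapping to a point under either coordinate projection meets each $B_m$ ($0\le m\le n$) in at least one affine point, and no affine point lies on more than $M$ of them, so $E\cap A_n$ contains at least three distinct points. Now if there were infinitely many $k$ with $\Phi_f^k([\alpha:\beta:1])\in C$ and $S$-integral for $D$, then for infinitely many $k>n$ the point $\Phi_f^{k-n}([\alpha:\beta:1])$ is $S$-integral for $A_n$; pigeonholing over the finitely many $K$-irreducible components $Z$ of $(\Phi_f^n)^{-1}(C)$, one such $Z$ carries infinitely many $K$-rational points $\Phi_f^m([\alpha:\beta:1])$, hence is geometrically irreducible, does not project to a point on either affine coordinate (as $\alpha,\beta$ are not preperiodic), and is not the line at infinity. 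By the claim $Z\cap A_n$ has at least three points, yet $Z$ contains infinitely many points that are $S$-integral for $A_n\supseteq D$; this contradicts Siegel's theorem on integral points, completing the proof.
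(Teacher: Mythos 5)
Your affine-chart analysis (multiplicity of $A_n$ at a point $[x_0:u_0:1]$ equals $\min(e_1,e_2)$, boundedness of $e(P/f^n(P))$ for finite $P$ since the finite critical points are non-periodic, multiplicity one of components) matches the paper's, and your diagnosis that the line at infinity is the only new feature is exactly right. But your resolution of that feature has a genuine gap. The counting step requires the assertion that the curve $E$ (or $Z$) ``meets each $B_m$ in at least one \emph{affine} point,'' and you give no argument for the word ``affine.'' In $\bP^2$, Bezout only guarantees that $E\cap B_m\neq\emptyset$; the intersection can lie entirely on the line $\{z=0\}$. For instance, with $f(t)=t^2+1$ one has $B_1=\{x+y=0\}$, and the line $E=\{x+y=z\}$ (not contained in $\{z=0\}$, surjecting onto both coordinates) meets $B_1$ only at $[1:-1:0]$. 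Since you have no bound on how many of the $B_m$ pass through a given point at infinity, the divisors that meet $Z$ only at infinity could all be absorbed by a single point there, and your count of ``at least three distinct points of $Z\cap A_n$'' collapses; restricting the multiplicity bound to $\bA^2(\bK)$ therefore does not suffice.

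The paper closes exactly this hole by a short degree count at infinity: a point $[x:y:0]$ lies on $A_n$ iff $x^{d^n}=y^{d^n}$, which gives exactly $d^n$ such points, while $\deg A_n=d^n$; hence each point of $A_n$ on $\{z=0\}$ has multiplicity one on $A_n$, so it lies on at most one $B_m$. With that, \emph{every} point of $\bP^2(\bK)$ lies on at most $M$ of the $B_m$, the three-distinct-points claim holds with the points allowed to be at infinity (Siegel's theorem only needs three points in $Z\cap\Supp A_n$), and the rest of your endgame (pulling back $S$-integrality through $\Phi_f^n$, pigeonholing onto an irreducible $Z$ with infinitely many $K$-rational orbit points, contradiction via Siegel) is the same as the paper's. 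So the proposal is fixable, but as written the key counting claim is unjustified and false for general curves.
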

\begin{proof}
The proof is almost identical to the proof of Proposition
\ref{integral}.  Note that if $[x:y:0]$ lies on $A_n$, then $[x:y:0]$
must be in the inverse image of $[1:1:0]$ under $\Phi_f$, which is
equivalent to saying that $x^{d^n} = y^{d^n}$.  There are exactly
$d^n$ such points, and $d^n$ is also the degree of $A_n$; so each
point of the form $[x:y:0]$ must have multiplicity one on $A_n$ (and
hence on $B_n$) for any $n$.  Then, as in Proposition \ref{integral},
we can bound the multiplicity of any point $[x:y:1]$ on $A_n$ by 
$$\big( \prod_{i=1}^m e(Q_i/f(Q_i)) \big)$$
where the $Q_i$ are the
ramification points of $f$ other than infinity.  Thus, again, if there
are infinitely many points on $C$ that are $S$-integral for $D$, then
for any $n$, there are infinitely many points on some irreducible
curve $E$ in $(\Phi_f^n)^{-1}(C)$ that are $S$-integral for $A_n$.
When $n$ is at least 
$2\cdot \prod_{i=1}^m e(Q_i/f(Q_i))$, such a curve
$E$ must meet $A_n$ in at least three distinct points, which gives us
a contradiction by Siegel's theorem.
\end{proof}

We are now ready to prove Theorem~\ref{same v}.

\begin{proof}[Proof of Theorem~\ref{same v}]
  If $\varphi: \bP^1 \lra \bP^1$ has no periodic
  critical points, Proposition~\ref{integral} implies that for any
  finite set $S$ of places of $K$, there are only finitely many $n$ such
  that $\varphi^n(\alpha)$ does not meet $\varphi^n(\beta)$ at any $v$
  outside of $S$.  Thus, there must be infinitely many places $v$ such
  that $r_v(\varphi^n(\alpha)) = r_v(\varphi^n(\beta))$ for some $n\in\N$.

  On the other hand, if
  $\varphi$ has an exceptional point, then after changing coordinates,
  we have $\varphi = f$ for some polynomial $f$ (note that $\varphi$ does not have two exceptional points, as it is not conjugate to a map of the form $t\mapsto t^n$). Furthermore, since $\varphi$ has
  no non-exceptional periodic critical points, it follows that $f$ has 
  no periodic critical points save the point at infinity.  By
  Proposition~\ref{poly}, for any finite set $S$ of places of $K$,
  there are at most finitely many $n$ such that
  $f^n(\alpha) - f^n(\beta)$
  is an $S$-unit.  Thus, there are infinitely many
  places $v$ such that $r_v(f^n(\alpha)) = r_v( f^n(\beta))$ for some $n\in\N$.
\end{proof}

\section{Dynamical Mordell-Lang  for curves}
\label{sect:dml}

Using Theorem~\ref{same v} we can prove a dynamical Mordell-Lang
statement for curves embedded in $\bP^1\times \bP^1$.
\begin{theorem}
\label{plane curves}
Let $C\subset \bP^1\times\bP^1$ be a curve defined over $\oQ$, and let
$\Phi:=(\varphi,\varphi)$ act on $\bP^1\times\bP^1$, where
$\varphi\in\oQ(t)$ is a rational function with no superattracting periodic
points other than exceptional points. Let $\cO$ be the
$\Phi$-orbit of a point $(x,y)\in\left(\bP^1\times
\bP^1\right)(\oQ)$. Then $C(\oQ)\cap\cO$ is a union of
at most finitely many
orbits of the form $\{\Phi^{nk+\ell}(x,y)\}_{n\ge 0}$ for
$k,\ell\in\bN$.
\end{theorem}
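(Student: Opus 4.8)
The plan is to deduce Theorem~\ref{plane curves} from Theorem~\ref{same v} and Theorem~\ref{avoiding attracting points}, after reducing to a favourable situation. Since $C(\oQ)\cap\cO$ may be analyzed one irreducible component of $C$ at a time, and since $C(\oQ)$, $\cO$, and the conclusion are unchanged if we enlarge the number field over which we argue, I first assume that $C$ is irreducible and that $C$, the point $(x,y)$, and the coefficients of $\varphi$ are defined over a single number field $K$. If $x$ or $y$ is preperiodic, then one coordinate of $\Phi^k(x,y)$ ranges over a finite set, and splitting according to whether $C$ contains the corresponding vertical or horizontal fibre shows that $\{k:\Phi^k(x,y)\in C\}$ is already a finite union of arithmetic progressions and a finite set; so assume $x$ and $y$ are not preperiodic. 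If $C$ fails to dominate one of the projections then $C$ is a fibre $\{a\}\times\bP^1$ or $\bP^1\times\{b\}$, and non-preperiodicity of $x$, $y$ forces only finitely many $k$ onto $C$; so assume $C$ dominates both projections. Finally, if $\varphi$ is conjugate to $t\mapsto t^n$ then, after conjugating $\varphi$, $C$, and $(x,y)$ by a linear fractional transformation, $\varphi(t)=t^n$ and (the cases $x,y\in\{0,\infty\}$ having been excluded as preperiodic) $\cO$ lies inside the subgroup $\langle(x,y)\rangle$ of $\bG_m^2$, which has rank at most $1$; by Laurent's theorem on subvarieties of tori, either $C\cap\langle(x,y)\rangle$ is finite or $C$ is a translate $\{x^py^q=\zeta\}$ of a subtorus, and in that case $\Phi^k(x,y)\in C$ exactly when $(x^py^q)^{n^k}=\zeta$, which occurs for a set of $k$ that is either finite or---when $x^py^q$ is a root of unity---a finite union of arithmetic progressions. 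Hence from now on $\varphi$ is not conjugate to $t\mapsto t^n$.

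Let $N:=\{k\ge 0:\Phi^k(x,y)\in C\}$; if $N$ is finite the theorem is immediate, so suppose $N$ is infinite. Then all the hypotheses of Theorem~\ref{same v} hold with $\alpha=x$, $\beta=y$, and the curve $C$, and it produces infinitely many finite places $v$ of $K$ at which $\varphi$ has good reduction and for which $r_v(\varphi^{n}(x))=r_v(\varphi^{n}(y))$ for some $n\ge 1$. By good reduction this equality persists for all $k\ge n$, so at each such $v$ the reduced orbits of $x$ and of $y$ eventually coincide.

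The crux of the argument---and the step I expect to be the main obstacle---is to select, among these infinitely many places, one place $v$ at which this common reduced orbit eventually lands in an \emph{indifferent} cycle of $\bar\varphi$, i.e.\ a cycle of residue classes containing no reduction of a critical point of $\varphi$; by \cite[Proposition~4.32]{Riv} such a residue class is then a quasiperiodicity disk. Since a cycle of residue classes, once entered, is never left, this is equivalent to asking that the orbits of $x$ and $y$ avoid the residue classes of all $v$-adic attracting periodic points of $\varphi$. This is exactly where the hypothesis on superattracting points enters: no non-exceptional critical point of $\varphi$ is periodic, so the reduced orbit can be trapped in a superattracting cycle of $\bar\varphi$ only at a place where the (fixed, infinite) forward orbit of $x$ meets the forward orbit of some critical point $c_i$ \emph{and} the reduction $\bar c_i$ happens to be $\bar\varphi$-periodic. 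I would argue that the set of such ``bad'' places is too sparse to exhaust the whole family of places supplied by Theorem~\ref{same v}---again by an integrality argument of Siegel type, of the flavour of Section~\ref{intersection theory}---leaving at least one good place $v$. The common-reduction conclusion of Theorem~\ref{same v} is what lets this be a single condition, on the orbit of $x$ alone, rather than two separate conditions on the orbits of $x$ and of $y$.

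Granting such a place $v$: by construction the orbits of $x$ and of $y$ avoid the residue classes of every attracting $\varphi$-periodic point at $v$, and $\varphi$ has good reduction at $v$, so Theorem~\ref{avoiding attracting points} (with $g=2$ and $f_1=f_2=\varphi$) shows that $C(\Cp)\cap\cO$ is a finite union of orbits of the form $\{\Phi^{nk+\ell}(x,y)\}_{n\ge 0}$. Finally, since $\cO\subseteq(\bP^1(\oQ))^2$, a point of $\cO$ lies on $C$ over $\Cp$ precisely when it lies on $C$ over $\oQ$, so $C(\oQ)\cap\cO=C(\Cp)\cap\cO$, which completes the proof.
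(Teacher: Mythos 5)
Your reduction steps and the use of Theorem~\ref{same v} match the paper, but the step you yourself flag as ``the crux'' is a genuine gap, and it is precisely the point where the paper's proof goes a different (and complete) way. You propose to select, among the infinitely many places furnished by Theorem~\ref{same v}, one place at which the common reduced orbit avoids every residue class of a $v$-adic attracting periodic point, and you justify this only by asserting that the set of ``bad'' places is ``too sparse,'' to be shown by an unspecified Siegel-type integrality argument. No such argument is given, and it is far from clear one exists: bad places are exactly those where the orbit of $x$ meets, modulo $v$, the periodic part of the reduced orbit of a critical point, and Silverman-type results (compare Lemma~\ref{simple siegel}) show that coincidences of this kind occur at \emph{infinitely} many places. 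Both your ``good'' places and the putative ``bad'' places arise from Siegel-type constructions, and there is no evident way to play one density statement off against the other to guarantee a surviving good place. So as written the argument does not close.

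The paper avoids this problem entirely: it fixes \emph{any} place $v$ from Theorem~\ref{same v} and splits into two cases. If the forward images of the common residue class $U$ of $\varphi^n(x)$ and $\varphi^n(y)$ never contain an attracting periodic point, then Theorem~\ref{avoiding attracting points} applies, exactly as in your final paragraph. If some image of $U$ does contain an attracting periodic point, then --- and this is where the common reduction of the two orbits is really used --- both $x$ and $y$ are attracted to the \emph{same} attracting cycle, and the conclusion follows from \cite[Theorem~1.3]{GT-newlog}, which handles orbits converging $v$-adically to a common attracting periodic point. That second case is the ingredient missing from your proposal; without it (or a genuine proof of your sparsity claim) the proof is incomplete. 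Two smaller points: the case $\deg\varphi=1$ is not covered by your argument (Theorem~\ref{same v} needs degree at least $2$); the paper disposes of it via \cite{Denis-dynamical} and \cite{Bell}. Your treatment of the case $\varphi$ conjugate to $t\mapsto t^n$ via Laurent's theorem is fine and is essentially the same as the paper's appeal to \cite[Theorem~1.8]{GT-newlog}.
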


\begin{proof}
  When $\deg (\varphi) = 1$, the result follows immediately from work
  of Denis \cite{Denis-dynamical} and Bell \cite{Bell}, since in this
  case $\Phi$ induces an automorphism of $\bA^2$. Hence, we may assume
  $\deg(\varphi)\ge 2$.

If $\varphi$ has two exceptional points,
then $\varphi$ is conjugate to the map $t\mapsto t^n$, for
some $n\in\bZ$. Then our result follows from
\cite[Theorem 1.8]{GT-newlog},
as $\Phi$ induces an endomorphism of
$\bG_m^2$.  Thus, we may assume that $\varphi$ has at most
one exceptional point, and no other periodic critical points.
  
  We may assume that $C$ is irreducible, and that $C(\oQ)\cap\cO$ is
  infinite.  We may also assume that neither $x$ nor $y$ is
  $\varphi$-preperiodic, because in that case the projection of $C$ to
  one of the two coordinates of $\bP^1\times\bP^1$ consists of a
  single point (which would be a $\varphi$-periodic point), and the
  conclusion of Theorem~\ref{plane curves} would be immediate.

Let $K$ be a number field over which $\varphi$, $C$, and $(x,y)$ are
defined.  By the previous paragraph, the
hypotheses of Theorem~\ref{same v} hold for $(\alpha,\beta)=(x,y)$.
Thus, there are 
infinitely many nonarchimedean places $v$ of $K$
at which $\varphi$ has good reduction and such that
$r_v(\varphi^{n}(x))=r_v(\varphi^{n}(y))$ for some integer
$n\geq 1$.  Fix such a place $v$.

Let $p\in\N$ be the prime number lying in the maximal ideal
of the nonarchimedean place $v$, fix an embedding of $K$
into $\Cp$ respecting $v$, and let $U$ denote the
residue class of $\bP^1(\Cp)$ containing
$\varphi^n(x)$ and $\varphi^n(y)$.  Since $\varphi$
has good reduction, every iterate $\varphi^{n+k}(U)$ is a
residue class, and it contains both $\varphi^{n+k}(x)$
and $\varphi^{n+k}(y)$.  If no such residue class
contains an attracting periodic point, 
then  our desired conclusion
is immediate from Theorem~\ref{avoiding attracting points}.

The remaining case is that some residue class
$\varphi^{n+k}(U)$ contains an attracting periodic point,
which must therefore attract the orbits of both $x$ and $y$.
The Theorem now follows from 
\cite[Theorem~1.3]{GT-newlog}.
\end{proof}

We can now prove Theorem~\ref{curves}
as a consequence of Theorem~\ref{plane curves}.


\begin{proof}[Proof of Theorem~\ref{curves}]
We may assume that $C$ is irreducible, and that $C(\oQ)\cap \cO$ is
infinite. It suffices to prove that $C$ is $\Phi$-periodic. Indeed, if
$\Phi^k(C) = C$, then for each $\ell\in\{0,\dots,k-1\}$, the intersection
of $C$ with $\cO_{\Phi^k}(\Phi^{\ell}(\alpha))$
either is empty or else consists
of all $\Phi^{kn+\ell}(\alpha)$, for some $n$ sufficiently large. Either way,
the conclusion of Theorem~\ref{curves} holds.

We argue by induction on $g$. The case $g=1$ is obvious, while the
case $g=2$ is proved in Theorem~\ref{plane curves}.
Assuming
Theorem~\ref{curves} for some $g\ge 2$, we will now prove it
for $g+1$. We may assume that $C$ projects dominantly onto each of the
coordinates of $\left(\bP^1\right)^{g+1}$; otherwise, we may view $C$
as a curve in $\left(\bP^1\right)^g$, and apply the inductive
hypothesis.
We may also assume that no $x_i$ is preperiodic,
lest $C$ should fail to project dominantly
on the $i^{th}$ coordinate.

Let $\pi_1:\left(\bP^1\right)^{g+1}\to\left(\bP^1\right)^g$
be the projection onto the first $g$ coordinates,
let $C_1:=\pi_1(C)$, and let $\cO_1:=\pi_1(\cO)$.
By our assumptions, $C_1$ is an
irreducible curve that has an infinite intersection with $\cO_1$. By
the inductive hypothesis, $C_1$ is periodic under the
coordinatewise action of $\varphi$ on the first $g$ coordinates of
$\left(\bP^1\right)^{g+1}$.

Similarly, let $C_2$ be the projection of $C$ on the last $g$
coordinates of $\left(\bP^1\right)^{g+1}$. By the same argument,
$C_2$ is periodic under the coordinatewise action of $\varphi$
on the last $g$ coordinates of $\left(\bP^1\right)^{g+1}$.

Thus, $C$ is $\Phi$-preperiodic,
because it is 
an irreducible component of the
one-dimensional variety
$\left(C_1\times \bP^1\right) \cap \left(\bP^1 \times C_2\right)$,
and because both $C_1\times \bP^1$ and $\bP^1\times
C_2$ are $\Phi$-periodic.

\begin{claim}
\label{preperiodic but not periodic}
Let $X$ be a variety,
let $\alpha\in X(\Kbar)$,
let $\Phi:X\lra X$ be a morphism, 
and let $C\subset X$ be an irreducible curve that has infinite intersection
with the orbit $\cO_{\Phi}(\alpha)$.
If $C$ is $\Phi$-preperiodic, then $C$ is $\Phi$-periodic.
\end{claim}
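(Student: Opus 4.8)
The plan is to exploit the interaction between the preperiodicity of $C$ and the fact that the orbit of $\alpha$ meets $C$ infinitely often. Since $C$ is $\Phi$-preperiodic, there are integers $a \ge 0$ and $b \ge 1$ such that $\Phi^{a+b}(C) = \Phi^a(C)$; choose $a$ minimal with this property, and set $C' := \Phi^a(C)$, so that $C'$ is genuinely $\Phi^b$-periodic. Now consider the infinitely many integers $m$ with $\Phi^m(\alpha) \in C$. Applying $\Phi^a$, we get $\Phi^{m+a}(\alpha) \in \Phi^a(C) = C'$ for all these $m$, so $C'$ also meets the orbit $\cO_\Phi(\alpha)$ in infinitely many points, and in particular $C'$ is $\Phi^b$-periodic. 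The goal is now to deduce that in fact $a = 0$, i.e.\ that $C$ itself is periodic.

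First I would record that $\Phi$ restricts to a finite (dominant, hence surjective on the level of these curves after passing to a common iterate) morphism $C \to \Phi(C) \to \Phi^2(C) \to \cdots$, and that the sequence of curves $C, \Phi(C), \Phi^2(C), \dots$ is eventually periodic with the ``tail'' of length exactly $a$. The key observation is that the set $S := \{\, m \ge 0 : \Phi^m(\alpha) \in C \,\}$ is infinite by hypothesis, so it contains integers $m$ that are arbitrarily large; pick one such $m > a$. Then $\Phi^m(\alpha) \in C$, but also $\Phi^m(\alpha) = \Phi^a\big(\Phi^{m-a}(\alpha)\big)$ lies in $\Phi^a(X)$. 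More usefully, since $m - a \ge 0$ we can write $m = a + jb + r$ with $0 \le r < b$ and $j$ large; applying $\Phi^{m-r}$ to... — rather, the cleaner route: observe that for $m \in S$ large, $\Phi^m(\alpha)$ lies in the periodic part of the forward-image sequence, so $\Phi^m(\alpha) \in \Phi^{a}(C)$. But we also assumed $\Phi^m(\alpha) \in C$. Hence $C$ and $\Phi^a(C)$ share infinitely many points (as $m$ ranges over $S$), and since both are irreducible curves, $C = \Phi^a(C)$. Combined with $\Phi^{a+b}(C) = \Phi^a(C)$, this gives $\Phi^b(C) = \Phi^b(\Phi^a(C)) \cdot$... more directly $\Phi^a(C) = C$ yields $\Phi^{a+b}(C) = \Phi^a(C) = C$, so $\Phi^{a+b}$ fixes $C$ and $C$ is periodic.

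The one point that needs care — and the step I expect to be the real content — is justifying that $\Phi^m(\alpha)$ lies in the \emph{periodic} part $\Phi^a(C)$ of the image chain once $m$ is large enough. This requires knowing that $\Phi^m(\alpha) \in C$ forces $\Phi^m(\alpha) \in \Phi^a(C)$ as well; this is where one must be slightly careful, since $\Phi^m(\alpha) \in C$ does not by itself say anything about its membership in $\Phi^a(C)$. The fix is to note that $\Phi^m(\alpha) \in C$ with $m \ge a$ implies $\Phi^m(\alpha) = \Phi^a(\beta)$ where $\beta = \Phi^{m-a}(\alpha) \in X$, but we need $\beta \in C$, which we do not know. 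Instead, I would run the argument with the roles reversed: since $\Phi^a(C) = C'$ is $\Phi^b$-periodic and meets $\cO_\Phi(\alpha)$ infinitely often (at the points $\Phi^{m+a}(\alpha)$, $m \in S$), and since $C$ meets $\cO_\Phi(\alpha)$ infinitely often, among the infinitely many $m \in S$ there exist arbitrarily large $m$ with $m \equiv a \pmod b$ (after replacing $S$ by an infinite arithmetic-progression subset, which exists since $S$ is infinite and $b$ is fixed — pass to a residue class mod $b$ containing infinitely many elements of $S$); for such $m$ one has $\Phi^m(\alpha) \in C$, and since $m - a \ge 0$ is a multiple of $b$ and $\Phi^a(C)$ is $\Phi^b$-periodic, $\Phi^m(\alpha) = \Phi^{a}\!\big(\Phi^{m-a}(\alpha)\big)$; we still need $\Phi^{m-a}(\alpha) \in C$. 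Since the congruence class was chosen so that $m - a$ itself need not lie in $S$, I would instead argue directly that the image sequence $C_j := \Phi^j(C)$ satisfies $C_{j} = C_{j+b}$ for $j \ge a$, and that $\Phi^m(\alpha) \in C_0 = C$ for infinitely many $m$; applying $\Phi^{a}$ to the relation $\Phi^{m}(\alpha) \in C$ gives $\Phi^{m+a}(\alpha) \in C_a = C_{a + b} = \cdots$, so the orbit visits $C_a$ infinitely often; and since $C_a$ is irreducible and the orbit is a single forward orbit, the set of its points on $C$ and on $C_a$ must coincide beyond some index (because an irreducible curve containing infinitely many points of a forward orbit is uniquely determined as the Zariski closure of a cofinite part of the orbit's intersection) — hence $C = C_a = \Phi^a(C)$, and $C$ is $\Phi^b$-periodic as claimed. $\qed$
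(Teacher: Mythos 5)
Your proposal contains a genuine gap at its decisive step. You correctly reduce everything to showing $C=\Phi^a(C)$, and you even flag the problem yourself (``$\Phi^m(\alpha)\in C$ does not by itself say anything about its membership in $\Phi^a(C)$''), but the fix you finally settle on rests on a false principle: it is \emph{not} true that an irreducible curve containing infinitely many points of a forward orbit is uniquely determined by that orbit. Two distinct irreducible curves can each contain infinitely many points of the same orbit, simply by containing disjoint infinite subsets of it; for instance, for $\Phi(x,y)=(-x,y+1)$ on $\bA^2$ and $\alpha=(1,0)$, the lines $x=1$ and $x=-1$ each meet $\cO_{\Phi}(\alpha)$ infinitely often. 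So from ``$C$ meets the orbit infinitely often'' and ``$C_a$ meets the orbit infinitely often'' you cannot conclude that the visits eventually coincide, and the deduction $C=C_a$ is unsupported. (The Zariski-closure remark does not help: the closure of the visits to $C$ is $C$ and the closure of the visits to $C_a$ is $C_a$, but these are closures of possibly different subsets of the orbit.) Your earlier attempts in the same paragraph fail for the reason you yourself identify: writing $\Phi^m(\alpha)=\Phi^a(\Phi^{m-a}(\alpha))$ pulls the point \emph{backward}, and you have no control over whether $\Phi^{m-a}(\alpha)$ lies on $C$.

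The missing idea is to push a \emph{single early visit forward} rather than pull later visits backward, after first adjusting the modulus. Choose $k$ to be a multiple of both the tail length and the eventual period (in the paper's notation $k=n_0k_0$), so that $C':=\Phi^k(C)$ satisfies $\Phi^{jk}(C)=C'$ for \emph{every} $j\ge 1$. Some residue class $\ell$ modulo $k$ has infinitely many $n$ with $\Phi^{nk+\ell}(\alpha)\in C$; let $n_1$ be the smallest such $n$. Then for every $n\ge n_1+1$,
$$\Phi^{nk+\ell}(\alpha)=\Phi^{(n-n_1)k}\bigl(\Phi^{n_1k+\ell}(\alpha)\bigr)\in\Phi^{(n-n_1)k}(C)=C',$$
so the infinitely many later visits to $C$ in this progression all lie in $C\cap C'$. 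If $C$ were not periodic then $C\ne C'$, and two distinct irreducible curves have finite intersection --- a contradiction. This is exactly the step your residue-class reduction was groping toward (though you reduced modulo $b$ and aimed for $m\equiv a\pmod b$, which is the wrong congruence for this purpose): the point is not that each visit individually lands in the cycle, but that every visit after the first one is a forward image under $\Phi^{jk}$, $j\ge1$, of a point of $C$, hence automatically lies on $C'$.
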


\begin{proof}[Proof of Claim~\ref{preperiodic but not periodic}.]
Assume $C$ is not periodic.
Because $C$ is preperiodic, there exist 
$k_0,n_0\ge 1$ such that $\Phi^{n_0}(C)$ is periodic of period $k_0$.
Let $k:=n_0k_0$, and let $C':=\Phi^k(C)$, which is fixed by $\Phi^k$.
Then $C\ne C'$, since $C$ is not periodic.
Because  $C$ and $C'$ are irreducible curves, it follows that
\begin{equation}
\label{equ 1}
C\cap C'\text{ is finite.} 
\end{equation}

On the other hand,
there exists $\ell\in\{0,\dots,k-1\}$ such that
$C\cap\cO_{\Phi^k}(\Phi^{\ell}(\alpha))$ is infinite,
because $C\cap\cO_{\Phi}(\alpha)$ is infinite.
Let $n_1\in \N$ be the smallest nonnegative integer $n$ such that
$\Phi^{nk+\ell}(\alpha)\in C$. Since $C'=\Phi^k(C)$ is fixed by
$\Phi^k$, we conclude that $\Phi^{nk+\ell}(\alpha)\in C'$ for each
$n\ge n_1+1$. Therefore
\begin{equation}
\label{equ 2}
C\cap\cO_{\Phi^k}(\Phi^{\ell}(\alpha))\cap C'\text{ is infinite.}
\end{equation}
Statements \eqref{equ 1} and \eqref{equ 2} are contradictory,
proving the claim.
\end{proof}
An application of Claim~\ref{preperiodic but not periodic}
with $X=(\bP^1)^{g+1}$ now completes the proof of Theorem~\ref{curves}.
\end{proof}

\section{Quadratic polynomials}
\label{sect: quadratic}

In this Section, we will prove Theorems \ref{quadratic} and
\ref{rationals}.  We will continue to work with the same reduction
maps $r_v: \bP^1(K) \lra \bP^1(k_v)$ as in Section~\ref{notation},
where $v$ is a finite place of $K$.  We begin with a lemma derived
from work of Silverman \cite{SilSiegel}.

\begin{lemma}\label{simple siegel}
  Let $\varphi: \bP^1 \lra \bP^1$ be a morphism of degree greater than
  one, let $\alpha \in \bP^1(K)$ be a point that is not preperiodic for
  $\varphi$, and let $\beta \in \bP^1(K)$ be a nonexceptional point for $\varphi$.
  Then there
  are infinitely many $v$ such that there is some positive integer $n$
  for which $r_v(\varphi^n(\alpha)) = r_v(\beta)$.  
\end{lemma}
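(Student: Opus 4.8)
The plan is to deduce this from Silverman's theorem on integral points in orbits, which says that if $\varphi$ has degree $\ge 2$ and $\alpha$ is not preperiodic, then the orbit $\{\varphi^n(\alpha)\}_{n\ge 0}$ contains only finitely many points that are $S$-integral relative to a fixed nonexceptional point $\beta$ (for any finite set $S$ of places). This is precisely the content of \cite{SilSiegel}. I would first set up coordinates: choose a number field $K$ over which $\varphi$, $\alpha$, and $\beta$ are all defined, and let $S_0$ be the finite set of places containing all archimedean places together with all places of bad reduction for $\varphi$ (finite by \cite[Proposition~4.2]{MorSil2}). The key translation is that $r_v(\varphi^n(\alpha)) = r_v(\beta)$ means exactly that the Zariski closures of $\varphi^n(\alpha)$ and $\beta$ meet in the fibre above $v$ of $\bP^1_{\fo_K}$; so if there were only finitely many places $v$ admitting such an $n$, then enlarging $S_0$ to a finite set $S$ containing all those places would make \emph{every} point $\varphi^n(\alpha)$ with $n\ge 1$ be $S$-integral for the divisor $(\beta)$.

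Next I would invoke Silverman's result directly with this set $S$: since $\beta$ is nonexceptional and $\alpha$ is not preperiodic, only finitely many $\varphi^n(\alpha)$ can be $S$-integral for $(\beta)$. But we just argued that \emph{all} of them (for $n\ge 1$) would be $S$-integral, and there are infinitely many since $\alpha$ is not preperiodic, so the orbit is infinite. This contradiction shows the set of places $v$ for which some $n\ge 1$ satisfies $r_v(\varphi^n(\alpha))=r_v(\beta)$ must in fact be infinite, which is the conclusion.

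One small wrinkle to address carefully: the statement is over an abstract $K$, whereas Silverman's theorem is stated over number fields, so I should note at the outset that $K$ may be taken to be a number field (it is, by hypothesis in the intended application — indeed the lemma's proof only needs $K$ to be a number field, and this is implicit from the use of $r_v$ and finite places $v$). Another point is handling the place at infinity: I would change coordinates so that $\beta \ne \infty$, or equivalently phrase $S$-integrality for the divisor $(\beta)$ intrinsically, so that the reduction-map identity $r_v(\varphi^n(\alpha)) = r_v(\beta)$ is correctly matched to the failure of $S$-integrality regardless of where $\beta$ sits. I do not expect a genuine obstacle here; the entire argument is a packaging of Silverman's theorem, and the only thing requiring a line of care is the bookkeeping that ``finitely many bad places'' $\cup$ ``the finitely many hypothetical places with a common reduction'' is still a finite set $S$ to which Silverman's theorem applies.
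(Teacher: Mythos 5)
Your argument is essentially the paper's proof: assume only finitely many such $v$, absorb them into a finite set $S$, and derive a contradiction from Silverman's theorem that an infinite orbit of a non-preperiodic point contains only finitely many points $S$-integral relative to a nonexceptional point. The only cosmetic difference is the coordinate handling: the paper moves $\beta$ \emph{to} the point at infinity so that Silverman's Theorem~2.2 (about $S$-integer coordinates, with the hypothesis that $\varphi^2$ is not a polynomial, which is exactly nonexceptionality of $\infty$) applies verbatim, whereas you invoke the equivalent version of Silverman's result relative to an arbitrary nonexceptional point.
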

\begin{proof}
  Suppose there were only finitely many such $v$; let $S$ be the
  set of all such $v$, together with all the archimedean places.
  We may choose coordinates $[x:y]$ for $\bP^1_K$ such that $\beta$ is
  the point $[1:0]$.  Since $[1:0]$ is not exceptional for
  $\varphi$, we see that $\varphi^2$ is not a polynomial with respect
  to this coordinate system.  Therefore,
  by \cite[Theorem 2.2]{SilSiegel},
  there are at most finitely many $n$ such
  that $\varphi^n(\alpha) =  [t:1]$ for $t \in \fo_S$,
  where $\fo_S$ is the ring of $S$-integers in $K$.  Hence, for all but
  finitely many integers $n\geq 0$, there is some $v \notin S$ such that
  $r_v(\varphi^n(\alpha)) = r_v(\beta)$; but this contradicts
  our original supposition.
\end{proof}

Recall that if $f$ has good reduction at a finite place $v$
of $K$, we write $f_v$ for the reduction of $f$ at $v$.

\begin{lemma}\label{sil}
  Let $\varphi: \bP^1 \lra \bP^1$ be a morphism of degree greater than
  one, and let $\alpha \in K$ be a point that is not periodic for
  $\varphi$.  Then there are infinitely many places $v$ of good
  reduction for $\varphi$ such that $r_v(\alpha)$ is not periodic for
  $\varphi_v$.
\end{lemma}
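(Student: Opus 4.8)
The plan is to deduce the lemma from Lemma~\ref{simple siegel} (i.e.\ from Silverman's theorem on integral points in orbits) by applying it with a \emph{strictly preperiodic} target point, together with the elementary observation that two distinct $K$-points have the same reduction at only finitely many places.

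First I would record the following fact, which will be used twice: if $z\in\bP^1(K)$ is preperiodic but not periodic for $\varphi$, then $r_v(z)$ is non-periodic for $\varphi_v$ at all but finitely many places $v$ of $K$. Indeed, choose minimal $m\ge 1$ and some $k\ge 1$ with $\varphi^{m+k}(z)=\varphi^m(z)$, and put $j_0=\lceil m/k\rceil$, so that $\varphi^{j_0k}(z)$ lies in a $\varphi$-cycle and hence differs from $z$. For $v$ of good reduction, $\varphi_v^k$ fixes $r_v(\varphi^m z)$, so the cycle of $r_v(z)$ (when $r_v(z)$ is periodic) has length dividing $k$; then $r_v(\varphi^{j_0k}z)=r_v(z)$, which, since $\varphi^{j_0k}(z)\ne z$, holds for only finitely many $v$. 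Taking $z=\alpha$ settles the lemma when $\alpha$ is preperiodic (it is then strictly preperiodic, as it is not periodic), so from now on I assume $\alpha$ is not preperiodic.

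The key step is to produce a good strictly preperiodic point $\delta$. Since $\deg\varphi\ge 2$, the map $\varphi$ has infinitely many periodic points but at most $2\deg\varphi-2$ critical values, so some periodic point $\gamma$ is not a critical value; then $\varphi^{-1}(\gamma)$ consists of $\deg\varphi\ge 2$ distinct points, exactly one of which lies in the cycle of $\gamma$. Let $\delta$ be any other preimage of $\gamma$; it is preperiodic but not periodic, hence not exceptional. Set $L=K(\delta)$. By the fact above (applied over $L$ to $z=\delta$), $r_w(\delta)$ is non-periodic for $\varphi_w$ for all but finitely many places $w$ of $L$. I would then apply Lemma~\ref{simple siegel} over $L$ to the non-preperiodic point $\alpha$ and the nonexceptional point $\delta$: there are infinitely many places $w$ of $L$ with $r_w(\varphi^{n}(\alpha))=r_w(\delta)$ for some integer $n=n_w\ge 1$.

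Discarding the finitely many $w$ of bad reduction and the finitely many $w$ at which $r_w(\delta)$ is periodic, there remain infinitely many $w$ for which $r_w(\varphi^{n_w}\alpha)=r_w(\delta)$ is non-periodic for $\varphi_w$; since $\varphi_w^{n_w}(r_w(\alpha))=r_w(\varphi^{n_w}\alpha)$ and the $\varphi_w$-image of a periodic point is periodic, $r_w(\alpha)$ is itself non-periodic for $\varphi_w$. Restricting these places to $K$ (good reduction and non-periodicity of $r_v(\alpha)$ are unaffected by the finite base change, and each place of $K$ has only finitely many places above it in $L$) gives infinitely many places $v$ of $K$ of good reduction with $r_v(\alpha)$ non-periodic for $\varphi_v$, as wanted. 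The only input of real substance is Lemma~\ref{simple siegel}; the decisive idea is to feed it a strictly preperiodic target so that a single congruence $r_v(\varphi^{n}(\alpha))=r_v(\delta)$ forces $r_v(\alpha)$ into a tail rather than a cycle. The two points needing a little care are the twice-used fact about reductions of strictly preperiodic points and the routine descent from $L$ to $K$.
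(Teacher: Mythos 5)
Your argument is correct and follows essentially the same route as the paper: handle strictly preperiodic $\alpha$ by a finiteness-of-coincident-reductions argument, then apply Lemma~\ref{simple siegel} over a finite extension to a suitably chosen auxiliary point and descend back to $K$. The only difference is the choice of target: the paper passes to an extension containing a nonexceptional \emph{fixed} point $\beta$, so that once the reduced orbit reaches $r_v(\beta)$ it stays there and $r_v(\alpha)\neq r_v(\beta)$ finishes immediately, whereas your strictly preperiodic $\delta$ works equally well at the cost of your (correct) auxiliary fact that a strictly preperiodic point has non-periodic reduction at almost all places.
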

\begin{proof}
If $\alpha$ is $\varphi$-preperiodic but not
periodic, then the $\varphi$-orbit $\cO_{\varphi}(\alpha)$ is finite.
Hence,  the reduction map $r_v$ is injective on $\cO_{\varphi}(\alpha)$
for all but finitely many places $v$, and Lemma~\ref{sil} holds in
this case.

Thus, we may assume that $\alpha$ is not preperiodic.  After passing
to a finite extension $L$ of $K$, we may also assume that $\varphi$
has a nonexceptional fixed point $\beta$. We extend our isomorphism between $\bP^1_K$ and the generic fibre of $\bP^1_{\fo_K}$ to an isomorphism from $\bP^1_L$ to the generic
fibre of $\bP^1_{\fo_L}$; and for each place $w|v$ of $L$, we obtain
reduction maps $r_w: \bP^1(L) \lra \bP^1(\ell_w)$, where $\ell_w$ is
the residue field at $w$.  For each such $w |v$, we have $r_v(\gamma)
= r_w(\gamma)$ for any $\gamma \in \bP^1(K)$.  By Lemma~\ref{simple
  siegel}, there are infinitely many places $w$ such that there is
some $n$ for which $r_w(\varphi^n(\alpha)) = r_w(\beta)$.  When $w|v$
for $v$ a place of good reduction for $\varphi$, this means that
$r_v(\varphi^m(\alpha)) = r_v(\varphi^n(\alpha))=r_w(\beta)$ for all
$m \geq n$, since $\beta$ is fixed by $\varphi$.  At all but finitely
many of these $v$, we have $r_v(\alpha) \not= r_w(\beta)$, which means
that there is no positive integer $m$ such that
$r_v(\varphi^m(\alpha)) = r_v(\alpha)$, as desired.
\end{proof}

We also need the following result for quadratic polynomials.
\begin{proposition}\label{quad 1}
  Let $K$ be a number field, and let $f \in K[t]$ be a quadratic
  polynomial with no periodic critical points other than the point at
  infinity.  Then there are infinitely many finite places $v$ of $K$
  such that $|f'(z)|_v = 1$ for each $z\in K$ such that $|z|_v\leq 1$
  and $r_v(z)$ is $f_v$-periodic.
\end{proposition}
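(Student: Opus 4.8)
The plan is to reduce Proposition~\ref{quad 1} to the statement that the reduction of the finite critical point of $f$ fails to be periodic at infinitely many places, which is exactly what Lemma~\ref{sil} provides. First I would write $f(t)=at^2+bt+c$ with $a\neq 0$ and introduce the unique finite critical point $z_0:=-b/(2a)\in K$, so that $f'(t)=2a(t-z_0)$ and hence $|f'(z)|_v=|2a|_v\,|z-z_0|_v$ for every $z\in K$ and every place $v$. The hypothesis that $f$ has no periodic critical point other than the point at infinity says precisely that $z_0$ is not $f$-periodic.

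Next I would apply Lemma~\ref{sil} with $\varphi=f$ and $\alpha=z_0$: since $z_0\in K$ is not periodic and $\deg f=2>1$, there are infinitely many places $v$ of good reduction for $f$ at which $r_v(z_0)$ is not $f_v$-periodic. From this infinite set I would discard the finitely many places lying above $2$ (and any remaining places of bad reduction, although Lemma~\ref{sil} already avoids those), obtaining an infinite set $\Sigma$. For $v\in\Sigma$, good reduction forces the leading coefficient $a$ to be a $v$-adic unit and $b,c$ to be $v$-adic integers, and $v\nmid 2$ makes $2a$ a $v$-adic unit; consequently $z_0=-b/(2a)$ is a $v$-adic integer and $|f'(z)|_v=|z-z_0|_v$ for all $z\in K$.

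Finally, for such a $v$ and any $z\in K$ with $|z|_v\leq 1$ and $r_v(z)$ periodic for $f_v$: because $r_v(z_0)$ is not periodic while $r_v(z)$ is, the two reductions differ, so $z$ and $z_0$ are distinct modulo the maximal ideal; as both are $v$-adic integers this forces $|z-z_0|_v=1$ and therefore $|f'(z)|_v=1$. Since this holds for every admissible $z$, each $v\in\Sigma$ has the required property, and $\Sigma$ is infinite, proving the proposition. The argument is short precisely because the substantive work is buried in Lemma~\ref{sil} (which in turn rests on Silverman's integral-points theorem, via Lemma~\ref{simple siegel}); the only points needing care are the bookkeeping of the two finite sets of places one must exclude — those of bad reduction and those dividing $2$, where both $f'$ and the residue characteristic misbehave — and the elementary but crucial observation that, away from residue characteristic $2$, the condition $|f'(z)|_v=1$ is literally the statement that $z$ avoids the residue class of the critical point $z_0$.
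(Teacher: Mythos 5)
Your proposal is correct and follows essentially the same route as the paper: apply Lemma~\ref{sil} to the finite critical point (which is non-periodic by hypothesis), discard the finitely many places of bad reduction and those above $2$, and note that at the remaining places $|f'(z)|_v=|z-z_0|_v=1$ whenever $r_v(z)$ is periodic, since $r_v(z_0)$ is not. The only cosmetic difference is that you deduce $|z_0|_v\le 1$ from good reduction and $v\nmid 2$, whereas the paper simply excludes the finitely many places where $|z_0|_v>1$ or $|2|_v<1$.
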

\begin{proof}
  Since $f$ is a quadratic polynomial, it only has one
  critical point $\alpha$ other than the point at infinity.
  By Lemma~\ref{sil} and because
  $\alpha$ is not periodic, there are infinitely many places $v$
  of good reduction for $f$ such that $r_v(\alpha)$ is not
  $f_v$-periodic, and such that
  $|\alpha|_v\leq 1$ and $|2|_v=1$.  (The last two
  conditions may be added because each excludes only
  finitely many $v$.)  In particular,
  $|f'(z)|_v = |z-\alpha|_v$ for any $z\in K$.

  Hence, for any such $v$, and for any
  $z\in K$ as in the hypotheses, we have
  $r_v(z)\neq r_v(\alpha)$, since $r_v(z)$ is
  periodic but $r_v(\alpha)$  is not.
  Thus, $|f'(z)|_v=|z-\alpha|_v=1$.
\end{proof}

We are now ready to prove Theorems~\ref{quadratic},
\ref{rationals}, and \ref{rationals-2}.

\begin{proof}[Proof of Theorem~\ref{quadratic}.]
Let $K$ be a number field such that $V$ is defined over
$K$, the polynomial $f$ is in $K[t]$,  and $x_1,\dots,x_g$ are all in
$K$.  
 
Using Proposition~\ref{quad 1}, we may choose a place $v$ of $K$
  such that
\begin{enumerate}
\item[$(a)$] $v$ is a place of good reduction for $f$;
\item[$(b)$] $|x_i|_v\le 1$, for each $i=1,\dots,g$;
\item[$(c)$] $|f'(z)|_v = 1$ for all $z$ such that
  $|z|_v\leq 1$ and $r_v(z)$ is $f_v$-periodic.
\end{enumerate}
Indeed, conditions $(a)$ and $(b)$ are satisfied at all but finitely many
places $v$, while condition $(c)$ is satisfied at infinitely many
places. 
Because $f$ is a polynomial, conditions $(a)$ and $(b)$ together
imply that $|f^n(x_i)|_v\leq 1$ for all $i=1,\dots,g$ and $n\geq 0$.
Meanwhile, condition $(c)$
implies that $f$ has no attracting periodic points at $v$.
The desired conclusion now
follows from Theorem~\ref{avoiding attracting points}.
\end{proof}

\begin{proof}[Proof of Theorem~\ref{rationals}.]
  After changing coordinates, we assume that $f(t)=t^2 + c$
  for some $c\in\bQ$.
  Thus, $0$ is the only finite critical
  point of $f$.  If $c\not\in\bZ$, then there is some $p$ such
  that $|c|_p > 1$.  But then $|f^n(0)|_p \to \infty$, so $0$ cannot
  be periodic.  Similarly, if $c$ is an integer other than $0$, $-1$
  or $-2$, then we have $|f^n(0)|_{\infty} \to \infty$, so $0$ cannot be
  periodic. If $c=-2$, then $0$ is only $f$-preperiodic, but not
  $f$-periodic.  In all the above cases,  the hypotheses of
  Theorem~\ref{quadratic} are met, and our proof is done.
  If $c=0$, then $f(t)=t^2$ is
  an endomorphism of $\bG_m^g$, and
  thus our result follows from \cite[Theorem 1.8]{GT-newlog}.

  We are left with the case that $f(t)=t^2-1$.  
  As in the proof of Theorem~\ref{curves}, we may assume
  (via induction on $g$) that no $x_i$ is preperiodic;
  in particular, all $x_i$ and $f(x_i)$ are nonzero.
If $f^2(z) = 0$, then
  either $z=0$, or $z = \pm \sqrt{2}$.  Bearing this fact in
  mind, we note that there are infinitely many
  primes $p$ such that $2$ is not a quadratic residue modulo $p$.
  Thus, we may choose an odd prime $p$ such that
  each $x_i$ and $f(x_i)$ is a $p$-adic unit,
  and such that $2$ is not a quadratic residue modulo $p$.
  Then there is no positive integer $n$ such that $f^n(x_i)$ is
  in the same residue class as $0$ modulo $p$ for any $i$.
  Therefore, $|f'(f^n(x_i))|_p=1$ for all $n$, and hence
  $f^n(x_i)$ never lies in the same residue class as
  an attracting periodic point.
  Theorem~\ref{rationals} now
  follows from Theorem~\ref{avoiding attracting points}.
\end{proof}

\begin{proof}[Proof of Theorem~\ref{rationals-2}.]
As before, we may assume that no $x_j$ is preperiodic for $f_j$.
By \cite[Theorem 1.2(iii)]{Jones},
for each $f_j$ that is not equal to $t^2 - 1$, the set
of primes $p$ such that there is an $n$ for which
$f_j^n(x_j) \equiv 0 \pmod{p}$ has Dirichlet density zero.
Meanwhile, as noted
in the proof of Theorem~\ref{rationals},
the density of primes $p$ such that $-2$ is a square modulo $p$
is $1/2$, and therefore the set of primes $p$ for which
there are an $n$ and and $j$ satisfying $f_j(t)=t^2-1$
and $f_j^n(x_j) \equiv 0 \pmod{p}$ must have (upper) density
at most $1/2$.  Hence, the set of primes $p$
such that $f^n_j(x_j) \not\equiv 0 \pmod{p}$ for all $n$ and
all $j = 1,\dots, g$ has (lower) density at least $1/2$.
Choosing such a prime $p$, we see that $f_j^n(x_j)$
never lies in the same residue class as an attracting periodic point
for any $n$ and any $j = 1, \dots, g$, and the result
follows from
Theorem~\ref{avoiding attracting points}.
\end{proof}

\section{Extensions over the field of complex numbers}
\label{extensions to C}

In this section we will use recent work of
Medvedev and Scanlon \cite{Medvedev/Scanlon}
to prove Theorem~\ref{polynomials, arbitrary curves over C}.
We begin with the following definitions.

\begin{definition}
\label{def:indecomp}
Let $K$ be a field, and let $\varphi\in K[t]$ be a
nonconstant polynomial.
We say that $\varphi$ is {\em indecomposable} if there are no
polynomials $\psi_1,\psi_2\in\overline{K}[t]$ of degree greater
than one such that $\varphi=\psi_1\circ\psi_2$.
\end{definition}

Generic polynomials of any positive degree are indecomposable.
This is obvious for
(all) polynomials of prime degree or degree one and
easy to prove
in degree at least $6$ (say by reducing to monic decompositions and
counting dimensions); but it can also be shown in
degree $4$.

\begin{definition}
\label{def:normform}
Let $K$ be a field, and let $f\in K[t]$ be a
polynomial of degree $m\geq 1$.
If $f$ is monic with trivial $t^{m-1}$ term,
we say that
$f$ is in {\em normal form}; that is,
$f$ is of the form
$$t^m + c_{m-2}t^{m-2} + \cdots + c_0.$$
In that case, we say that $f$ is of {\em type} $(a,b)$
if $a$ is the smallest nonnegative integer such that
$c_a\neq 0$, and $b$ is the largest positive integer
such that $f(t)=t^a u(t^b)$ for some polynomial
$u\in K[t]$.
\end{definition}

While we have introduced this definition of ``type'' to
aid our exposition, the accompanying notion of
normal form is not new.  In fact, as noted
in \cite[Equation~(2.1)]{Beardon-2},
if $\car K=0$ and
$f\in K[t]$ is a polynomial of degree $m\geq 2$,
and if
$K$ contains an $(m-1)$-st root of the leading coefficient,
then 
there is a linear polynomial $\mu\in K[t]$
such that $\mu^{-1}\circ f\circ\mu$
is in normal form.


\begin{definition}
For each positive integer $m$, define $D_m\in \mathbb{Z}[t]$
to be the unique polynomial of degree $m$ such that
$D_m(t+1/t)=t^m+1/t^m$.
\end{definition}

The usual Chebyshev polynomial
$T_m$ (satisfying $T_m(\cos(\theta))=\cos(m\theta)$)
is conjugate to $D_m$, since $D_m(2t)=2T_m(t)$.
However, $D_m$ is in normal form.

The following result is an immediate consequence of Theorem 3.149 in
\cite{Medvedev/Scanlon} (see also Section 3.2 in
\cite{Medvedev/Scanlon}).

\begin{theorem}[Medvedev, Scanlon]
\label{conj: periodic curves}
Let $K$ be a field of characteristic $0$, and let $\varphi\in K[t]$ be
a nonlinear indecomposable polynomial
which is not conjugate to $t^m$ or $D_m$ for any
positive integer $m$.
Assume that $\varphi$ is in normal form, of type $(a,b)$.

Let $\Phi$ denote the action of
$(\varphi,\varphi)$ on $\bA^2$. Let $C$ be a $\Phi$-periodic
irreducible plane curve defined over $K$.  Then $C$ is defined by one
of the following equations in the variables $(x,y)$ of the affine
plane:
\begin{itemize}
\item[(i)] $x = x_0$, for a $\varphi$-periodic point $x_0$; or
\item[(ii)] $y = y_0$, for a $\varphi$-periodic point $y_0$; or
\item[(iii)] $x = \zeta \varphi^r(y)$, for some $r \ge 0$; or
\item[(iv)] $y = \zeta \varphi^r(x)$, for some $r \ge 0$,
\end{itemize}
where $\zeta$ is a $d$-th root of unity,
where $d\mid b$ and $\gcd(d,a)=1$.
\end{theorem}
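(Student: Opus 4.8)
The plan is to deduce this statement directly from the structure theory of Medvedev and Scanlon, namely from \cite[Theorem~3.149]{Medvedev/Scanlon} together with the discussion of commuting polynomials in \cite[Section~3.2]{Medvedev/Scanlon}, and then to translate their conclusion into the four explicit families listed above. First I would dispose of the curves that fail to dominate one of the two coordinates of $\bA^2$: if $C$ maps to a single point $x_0$ under the first projection, then $\Phi$-periodicity of $C$ forces $\varphi^k(x_0)=x_0$ for the period $k$ of $C$, so $x_0$ is $\varphi$-periodic and $C$ is the line $x=x_0$ of case~(i); the symmetric argument gives case~(ii). Thus it remains to treat an irreducible $\Phi$-periodic curve $C$ that dominates both factors.

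For such a curve, the first key step is to invoke \cite[Theorem~3.149]{Medvedev/Scanlon} in its two-variable form, which says that $C$ is the graph of a polynomial in one of the two directions: $C$ is cut out either by $y=h(x)$ or by $x=h(y)$ for some nonconstant $h\in\overline{K}[t]$. This is precisely where the hypotheses enter: that $\varphi$ is disintegrated --- not conjugate to $t^m$ or $D_m$ --- rules out the extra periodic curves arising from the underlying group structure that occur for power maps and Chebyshev polynomials, while indecomposability eliminates the intermediate decompositions of $\varphi$ that would otherwise produce further families of periodic curves.

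The second step is to pin down which $h$ can occur. Writing $C$ as the graph $y=h(x)$ and applying $\Phi^k$, where $k$ is the period of $C$, one checks that $\Phi^k(C)=C$ is equivalent to $\varphi^k\circ h=h\circ\varphi^k$; that is, $h$ commutes with an iterate of $\varphi$. For a disintegrated indecomposable $\varphi$, the only polynomials commuting with some iterate of $\varphi$ are of the form $\zeta\varphi^r$ with $\zeta$ a root of unity and $r\ge 0$; this is the commuting-polynomials input of \cite[Section~3.2]{Medvedev/Scanlon} (ultimately Ritt's theory of commuting polynomials). It then remains to determine the admissible $\zeta$ by a direct computation with the normal form. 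Since $\varphi$ has type $(a,b)$, write $\varphi(t)=t^{a}u(t^{b})$; using that $b$ is maximal (so the exponents occurring in $u$ have greatest common divisor $1$), one finds $\varphi(\zeta t)=\zeta^{a}\varphi(t)$ precisely when $\zeta^{b}=1$, and iterating gives $\varphi^{k}(\zeta t)=\zeta^{a^{k}}\varphi^{k}(t)$. Hence $\varphi^{k}\circ(\zeta\varphi^{r})=\zeta^{a^{k}}\varphi^{k+r}$ while $(\zeta\varphi^{r})\circ\varphi^{k}=\zeta\varphi^{k+r}$, so the graph of $\zeta\varphi^{r}$ is $\Phi$-periodic exactly when $d:=\ord(\zeta)$ satisfies $d\mid b$ and $a^{k}\equiv 1\pmod{d}$ for some $k\ge 1$, i.e. $\gcd(a,d)=1$. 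This yields case~(iv), and running the argument with the roles of $x$ and $y$ interchanged gives case~(iii), completing the translation.

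The main obstacle is not in the bookkeeping above but in the black box \cite[Theorem~3.149]{Medvedev/Scanlon} itself: the assertion that, for a disintegrated indecomposable $\varphi$, every periodic plane curve dominating both factors is a polynomial graph in one direction. Since the present paper takes that theorem as an input, the real work here is to record the hypotheses in the precise form needed (normal form, type $(a,b)$, exclusion of $t^m$ and $D_m$) and to verify that the root-of-unity constraints coming out of \cite{Medvedev/Scanlon}, together with the commuting-polynomial computation above, match the condition ``$d\mid b$ and $\gcd(d,a)=1$'' in the statement.
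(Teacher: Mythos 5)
Your proposal is correct and takes essentially the same route as the paper: the paper gives no independent proof of this statement, recording it only as an immediate consequence of Theorem~3.149 and Section~3.2 of Medvedev--Scanlon, which is exactly the black box you invoke. Your additional translation work (disposing of non-dominant curves, reducing periodic graphs to polynomials commuting with an iterate of $\varphi$, and the normal-form computation yielding $d\mid b$ and $\gcd(d,a)=1$) is a reasonable reconstruction of why the paper can call the statement ``immediate'' from that citation.
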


\begin{remark}
Note that if $b=1$ or $a=0$
in Theorem~\ref{conj: periodic curves},
then $d=1$, and hence $\zeta=1$.
\end{remark}

Using Theorem~\ref{conj: periodic curves},
we can prove the following result.
\begin{theorem}
\label{polynomials, plane curves over C}
Let $\varphi\in\bC[t]$ be an indecomposable polynomial
with no periodic superattracting points other than exceptional points,
and let $\Phi:=(\varphi,\varphi)$ be its
diagonal action on $\bA^2$. Let $\cO$ be the $\Phi$-orbit of a point
$(x_0,y_0)$ in $\bA^2(\bC)$, and let $C$ be a curve defined over
$\bC$. Then $C(\bC)\cap\cO$ is a union of at most finitely many orbits of the
form $\{\Phi^{nk+\ell}(x_0,y_0)\}_{n\ge 0}$ for nonnegative integers
$k$ and $\ell$.
\end{theorem}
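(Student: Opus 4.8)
\textit{Proposal.} The plan is to descend everything to a number field by the method of specializations, invoke Theorem~\ref{plane curves} there, and then use the Medvedev--Scanlon description of periodic plane curves (Theorem~\ref{conj: periodic curves}) to pin down the specialized curve, bound its period uniformly, and transfer periodicity back to $\bC$.

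First I would carry out the routine reductions. After conjugating $\varphi$ by a linear polynomial I may assume it is in normal form, of some type $(a,b)$; by the remark following Definition~\ref{def:normform} this changes nothing. If $\deg\varphi=1$, then $\Phi$ is an automorphism of $\bA^2$ and the statement follows from Denis \cite{Denis-dynamical} and Bell \cite{Bell}, so assume $\deg\varphi\ge2$. If $\varphi$ is conjugate over $\bC$ to $t^m$ or to $D_m$, then $\Phi$ is conjugate coordinatewise to the corresponding map, which (after passing to a finite quotient in the $D_m$ case) is an endomorphism of $\bG_m^2$, and the statement follows from classical Mordell--Lang over $\bC$ (see \cite{GT-newlog}). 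Hence assume $\varphi$ is indecomposable, of degree $\ge 2$, in normal form of type $(a,b)$, and not conjugate to $t^m$ or $D_m$. I may assume $C$ is irreducible with $C(\bC)\cap\cO$ infinite; exactly as in the opening reduction of the proof of Theorem~\ref{curves}, it then suffices to prove that $C$ is $\Phi$-periodic. Finally I may assume that neither $x_0$ nor $y_0$ is $\varphi$-preperiodic: otherwise $C$ contains infinitely many points with a fixed first (or second) coordinate $a$, so $C$ is the line $x=a$ (or $y=a$), and $a$ is then a value attained infinitely often along an orbit, hence $\varphi$-periodic, so $C$ is $\Phi$-periodic. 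Put $e=e(a,b):=\lcm\{\ord_d(a):d\mid b,\ \gcd(d,a)=1\}$, a positive integer depending only on $(a,b)$.

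Now descend: all of the data lie over a finitely generated $\bQ$-subalgebra $R\subseteq\bC$ with $L:=\Frac R$, over which $\varphi$, $C$, $(x_0,y_0)$ are defined and $\Phi^n(x_0,y_0)\in C$ for the given infinitely many $n$. If $L$ is a number field, Theorem~\ref{plane curves} applies directly; so assume $\dim R\ge1$. I claim that for all closed points $\sigma$ of $\Spec R$ in a Zariski-dense subset, the specialization $(\widetilde\varphi,\widetilde C,(\widetilde x_0,\widetilde y_0))$ satisfies: (a) $\widetilde C$ is a geometrically irreducible curve; (b) $\widetilde\varphi$ is indecomposable, not conjugate to $t^m$ or $D_m$, and in normal form of type $(a,b)$; (c) no finite critical point of $\widetilde\varphi$ is $\widetilde\varphi$-periodic; (d) neither $\widetilde x_0$ nor $\widetilde y_0$ is $\widetilde\varphi$-preperiodic; and automatically (e) $\widetilde\Phi^n(\widetilde x_0,\widetilde y_0)\in\widetilde C$ for those same $n$. \textbf{Establishing (a)--(d) for a Zariski-dense set of $\sigma$ is the main obstacle.} Properties (a) and (b) fail only on a proper closed subset of $\Spec R$, since each defines a constructible locus not containing the generic point (these properties hold over $\bC$, hence over $\overline L$). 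For (c) and (d): if a point $z\in\{x_0,y_0\}$ or a critical point $z$ of $\varphi$ has infinite forward $\varphi$-orbit, then $z$ has positive canonical height over $L$, and a Call--Silverman-type theorem on specialization of canonical heights forces the specialized canonical height to be positive --- hence $\widetilde z$ non-preperiodic --- for all $\sigma$ of large enough height relative to a projective model of $\Spec R$; the remaining case, a critical point with finite forward orbit, involves only finitely many distinct nonzero values $\varphi^m(z)-z\in\overline L$ (none zero, as $z$ is not $\varphi$-periodic), so $\widetilde z$ stays non-$\widetilde\varphi$-periodic outside finitely many more proper closed loci. As only finitely many orbits are involved, the set of bad $\sigma$ lies in a proper closed subset together with a set of bounded height, whose complement among closed points is Zariski-dense.

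Fix a good $\sigma$. By (d) and (e) the points $\widetilde\Phi^n(\widetilde x_0,\widetilde y_0)$ are pairwise distinct and infinitely many lie on $\widetilde C$, so $\widetilde C(\oQ)\cap\widetilde\cO$ is infinite; by (b) and (c) the hypotheses of Theorem~\ref{plane curves} hold, and since $\widetilde C$ is an irreducible curve its conclusion forces $\widetilde C$ to be $\widetilde\Phi$-periodic. By (b), Theorem~\ref{conj: periodic curves} applies to $\widetilde\varphi$, so $\widetilde C$ is cut out by one of $x=x_1$, $y=y_1$, $x=\zeta\widetilde\varphi^{\,r}(y)$, $y=\zeta\widetilde\varphi^{\,r}(x)$; the first two are impossible because $\widetilde x_0,\widetilde y_0$ are non-preperiodic (a vertical or horizontal line meets $\widetilde\cO$ in at most one point), so $\widetilde C=\{x=\zeta\widetilde\varphi^{\,r}(y)\}$ or $\{y=\zeta\widetilde\varphi^{\,r}(x)\}$ with $\zeta^d=1$, $d\mid b$, $\gcd(d,a)=1$. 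Using $\widetilde\varphi(\zeta t)=\zeta^a\widetilde\varphi(t)$ (valid since $\widetilde\varphi(t)=t^a u(t^b)$ and $\zeta^b=1$) one computes $\widetilde\Phi(\{y=\zeta\widetilde\varphi^{\,r}(x)\})=\{y=\zeta^a\widetilde\varphi^{\,r}(x)\}$, and since $\ord_d(a)\mid e$ this yields $\widetilde\Phi^{\,e}(\widetilde C)=\widetilde C$ (and symmetrically in the other case). Hence $\Phi^e(C^{(\sigma)})=C^{(\sigma)}$ for every $\sigma$ in a Zariski-dense subset of $\Spec R$. Comparing defining polynomials shows that $\{\sigma:\Phi^e(C^{(\sigma)})=C^{(\sigma)}\}$ is Zariski-closed on the dense open locus where the family is well behaved, hence equals everything there; specializing to the generic point gives $\Phi^e(C)=C$ over $L$, so $C$ is $\Phi$-periodic over $\bC$. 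By the first paragraph, this completes the proof.
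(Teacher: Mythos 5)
Your overall strategy is the paper's: specialize so that Theorem~\ref{plane curves} applies, use Theorem~\ref{conj: periodic curves} to identify the specialized curve, and transfer periodicity back; and your endgame is in fact cleaner than the paper's (the uniform period bound $e(a,b)$ coming from $\varphi(\zeta t)=\zeta^a\varphi(t)$, plus a closedness/density argument on the base, where the paper instead compares defining equations at infinitely many places). The genuine gap is exactly the step you flag as the main obstacle: producing a Zariski-dense set of closed points $\sigma\in\Spec R$ at which the finitely many relevant non-preperiodic points (the coordinates $x_0,y_0$ and the non-preperiodic critical points of $\varphi$) specialize to non-preperiodic points. You justify this by ``a Call--Silverman-type theorem on specialization of canonical heights'' over a projective model of $\Spec R$, but Call--Silverman specialization is a theorem about families over a \emph{one-dimensional} base. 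When $\trdeg_{\bQ}L\ge 2$ there is no off-the-shelf canonical height on $\bP^1(L)$ with the Northcott and specialization properties you are using (one would need Moriwaki-type heights for finitely generated fields), and even over a one-dimensional function field the implication ``non-preperiodic $\Rightarrow$ positive canonical height'' fails in isotrivial situations, so the asserted structure of the bad set (proper closed subset union a set of bounded height) is not justified as stated. This is precisely the difficulty the paper's proof is organized to avoid: it inducts on $d=\trdeg_{\bQ}K$, so that each specialization is along places of the function field of a curve $Z$ over a smaller field $E$, where the one-dimensional result \cite[Proposition~6.2]{Mike} applies (non-preperiodic points stay non-preperiodic at all places of sufficiently large Weil height on $Z$); the number-field case, Theorem~\ref{plane curves}, is invoked only at the bottom of the induction, and at intermediate levels it is the inductive hypothesis, not the number-field theorem, that is applied over $E(\gamma)$.

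So your argument is repairable, but the repair is essentially the paper's induction: either slice off one transcendence degree at a time, or prove your higher-dimensional specialization claim by exactly such slicing. As written, the density claim is an unproved assertion rather than a citation, and it is the load-bearing step. The other specialization facts you need are fine: persistence of irreducibility, of normal form and type $(a,b)$, of indecomposability, and of not being conjugate to $t^m$ or $D_m$ can indeed be handled by constructibility of the relevant loci (the paper proves the corresponding statements for reductions at places in Propositions~\ref{needed conjugate reduction} and~\ref{compositional powers}, the latter by an ultraproduct argument), and your treatment of preperiodic-but-not-periodic critical points by avoiding finitely many proper closed subsets matches the paper's use of injectivity of reduction on a finite orbit.
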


We will need three more ingredients to prove
Theorem~\ref{polynomials, plane curves over C}.

\begin{proposition}
\label{twisted Laurent}
Fix integers $m,g\geq 1$,
let $\varphi\in\bC[t]$ be a polynomial which is a conjugate of either
$t^m$ or $D_m$, and let $\Phi$ be its coordinatewise
action on $\bA^g$.  Let $\cO$ be the
$\Phi$-orbit of a point $\alpha\in\bA^g(\bC)$, and let $V$ be an affine
subvariety of $\bA^g$ defined over $\bC$. Then $V(\bC)\cap\cO$ is a
union of at most finitely many orbits of the form
$\{\Phi^{nk+\ell}(P)\}_{n\ge 0}$
for nonnegative integers $k$ and $\ell$.
\end{proposition}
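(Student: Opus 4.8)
The plan is to reduce to the case of endomorphisms of the torus $\bG_m^g$ and invoke the known dynamical Mordell--Lang theorem for semiabelian varieties (specifically \cite[Theorem~1.8]{GT-newlog}), together with the dictionary between the dynamics of $t\mapsto t^m$ (resp. the Chebyshev polynomial $D_m$) and multiplication by $m$ on $\bG_m$. First I would dispose of the case where some coordinate of $\alpha$ is $\varphi$-preperiodic; then, as in the proof of Theorem~\ref{curves}, the corresponding coordinate of any curve/subvariety meeting $\cO$ in infinitely many points is forced to lie over a single periodic value, and we may pass to a lower-dimensional affine space and induct on $g$. So we may assume no coordinate of $\alpha$ is preperiodic. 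Next, after conjugating each coordinate by the \emph{same} linear $\mu$ (which changes $V$ and $\alpha$ but not the statement), we may assume $\varphi$ is literally $t^m$ or $D_m$ in normal form.

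In the case $\varphi(t)=t^m$, the map $\Phi=(\varphi,\dots,\varphi)$ on $\bA^g$ restricts, on the open torus $\bG_m^g\subseteq\bA^g$, to the multiplication-by-$m$ endomorphism $[m]:\bG_m^g\to\bG_m^g$. Since no $x_i$ is preperiodic, none is $0$ (nor a root of unity), so $\alpha\in\bG_m^g(\bC)$, and the entire orbit $\cO$ stays inside $\bG_m^g$. Given the subvariety $V\subseteq\bA^g$, its trace $V\cap\bG_m^g$ is a subvariety of $\bG_m^g$, and $V(\bC)\cap\cO=(V\cap\bG_m^g)(\bC)\cap\cO$. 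Now \cite[Theorem~1.8]{GT-newlog} applied to the endomorphism $[m]$ of the semiabelian variety $\bG_m^g$, the point $\alpha$, and the subvariety $V\cap\bG_m^g$ yields exactly that this intersection is a finite union of orbits $\{\Phi^{nk+\ell}(\alpha)\}_{n\geq 0}$, which is the desired conclusion. For $\varphi$ a conjugate of $D_m$, the same strategy applies via the standard degree-two quotient map $\pi:\bG_m\to\bA^1$, $\pi(t)=t+1/t$, which semiconjugates $[m]$ on $\bG_m$ to $D_m$ on $\bA^1$ (that is, $\pi\circ[m]=D_m\circ\pi$). Taking the $g$-fold product $\pi^{\times g}:\bG_m^g\to\bA^g$, I would choose a preimage $\tilde\alpha\in\bG_m^g(\bC)$ of $\alpha$ under $\pi^{\times g}$, let $\widetilde V:=(\pi^{\times g})^{-1}(V)$, and note that a point of $\cO$ lies on $V$ iff the corresponding point of the $[m]$-orbit of $\tilde\alpha$ lies on $\widetilde V$. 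Again $\tilde\alpha$ is not preperiodic for $[m]$ (its image $\alpha$ is not preperiodic for $\Phi$), so $\tilde\alpha$ has no coordinate that is a root of unity or zero, and \cite[Theorem~1.8]{GT-newlog} applies to $([m],\tilde\alpha,\widetilde V)$ on $\bG_m^g$; pushing the resulting finite union of $[m]$-orbits forward by $\pi^{\times g}$ gives a finite union of $\Phi$-orbits containing $V(\bC)\cap\cO$, and intersecting back down preserves the orbit structure (each $[m]^{nk+\ell}$-orbit maps onto a $\Phi^{nk+\ell}$-orbit).

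The main technical point to be careful about — and the step I expect to require the most attention — is the passage through the quotient map $\pi^{\times g}$ in the Chebyshev case: one must check that choosing a single preimage $\tilde\alpha$ does not lose solutions (it does not, since $\pi^{\times g}$ is \emph{surjective} on $\bC$-points and we only need \emph{one} preimage whose orbit surjects onto the orbit of $\alpha$), and that the finiteness of the union of orbits is genuinely inherited — i.e., that distinct $[m]$-orbits in $\bG_m^g$ do not collapse in a way that would spoil the count, which is fine because we only need an \emph{upper} bound ``finite union of orbits of the prescribed shape'' and the image of an $[m]^k$-invariant orbit of $\tilde\alpha$ under $\pi^{\times g}$ is visibly such an orbit of $\alpha$. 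Everything else is bookkeeping: the reduction to no-preperiodic-coordinates via induction on $g$ exactly as in the proof of Theorem~\ref{curves}, and the observation that simultaneous conjugation by a linear map is harmless.
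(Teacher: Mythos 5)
Your proposal is correct and follows essentially the same route as the paper: conjugate to normal form, transfer the dynamics to the $m$-th power endomorphism of $\bG_m^g$ (via $t\mapsto t+1/t$ in the Chebyshev case), pull back the point and the subvariety, and invoke \cite[Theorem~1.8]{GT-newlog}. The only cosmetic difference is that the paper treats both cases uniformly through a single semiconjugating map $k$ (a linear map, or a linear map composed with $t+1/t$), which also lets it avoid your preliminary induction to remove preperiodic coordinates.
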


\begin{proof}
By hypothesis, there is a linear polynomial $h(t)\in\bC[t]$ such
that either $\varphi(h(t))=h(t^m)$ or $\varphi(h(t))=h(D_m(t))$.  In the
first case, let $k(t)=h(t)$, and in the second, let
$k(t)=h(t+1/t)$.  Then
$\varphi(k(t))=k(t^m)$ for some nonconstant rational function
$k(t)\in\bC(t)$.  Note that in either case, $k(\bC)\supseteq\bC$,
and the only possible poles of $k$ are at $0$ and $\infty$.

Let $\alpha:=(x_1,\ldots,x_g)$. For each $i\in\{1,\dots,g\}$,
pick $z_i\in\bC$ such that $k(z_i)=x_i$.  Then
$\cO_{\Phi}(\alpha)=
\big\{\big(k(z_1^{m^n}),\ldots,k(z_g^{m^n})\big) : n\geq 0\big\}$.
Let $W$ be the affine subvariety of $\bG_m^g$ defined by
the equations $f(k(t_1),\dots,k(t_g))=0$, where
$f$ ranges over a set of generators for the vanishing
ideal of $V$. (Note that $W$ is an algebraic subvariety of $\bG_m^g$
because $k$ has no poles on $\bG_m$.)

Let $\Psi$ be the endomorphism of $\bG_m^g$ given by
$\Psi(t_1,\dots,t_g)=(t_1^m,\dots,t_g^m)$. Then
$$\Phi^n(x_1,\dots,x_g)\in V(\bC)\text{ if and only if }
\Psi^n(z_1,\dots,z_g)\in W(\bC).$$
Thus,
Proposition~\ref{twisted Laurent} holds for $\Phi$ and $V$
because,
by \cite[Theorem $1.8$]{GT-newlog}, it holds
for $\Psi$ and $W$.
\end{proof}


\begin{proposition}
\label{needed conjugate reduction}
Let $E$ be a field of characteristic $0$,
and $K$ a function field of transcendence degree
$1$ over $E$. Let $\varphi\in K[t]$ be a polynomial of degree
$m\ge 2$ in normal form.
Assume that $\varphi$ is not conjugate to $t^m$ or $D_m$.
Then for all but finitely places $v$ of the function field $K$,
the reduction $\varphi_v$ of $\varphi$ at $v$
is not conjugate to $t^m$ or $D_m$.
\end{proposition}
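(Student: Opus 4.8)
The plan is to reduce the statement to the elementary fact that a fixed nonzero element of a function field vanishes at only finitely many places, after producing an explicit, finite, and \emph{place-independent} list of the normal-form polynomials that are geometrically conjugate to $t^m$ or $D_m$. First I would pin down these conjugacy classes. Since $\varphi$ and each reduction $\varphi_v$ are polynomials, any conjugacy to $t^m$ or $D_m$ is realized by an affine map $\mu(t)=at+b$ (as recorded right after Definition~\ref{conjugated rational maps}). If $\mu^{-1}\circ(t\mapsto t^m)\circ\mu$ is in normal form, then comparing its leading and $t^{m-1}$ coefficients forces $a^{m-1}=1$ and $b=0$ (here $\car=0$ and $m\ge 2$ are used), so that $\mu^{-1}\circ(t\mapsto t^m)\circ\mu=t^m$; the same computation gives $\mu^{-1}\circ D_m\circ\mu=a^{-1}D_m(at)$. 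Hence the normal-form polynomials of degree $m$ that are conjugate, over any field of characteristic $0$, to $t^m$ or $D_m$ form the finite set $\Sigma_m:=\{t^m\}\cup\{a^{-1}D_m(at):a^{m-1}=1\}$; its members have coefficients in $\bQ(\zeta_{m-1})\subseteq\oQ$ and are literally the same for every such field, since the $(m-1)$-st roots of unity behave identically in characteristic $0$.

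Next I would control the reductions. Replacing $K$ by the finite extension $K(\zeta_{m-1})$ is harmless here: a good-reduction place $v$ of $K$ for which $\varphi_v$ is conjugate to $t^m$ or $D_m$ has at least one extension $w$ to $K(\zeta_{m-1})$ for which $\varphi_w$ has the same property, so an upper bound on the bad places of $K(\zeta_{m-1})$ yields one for $K$. We may thus assume $\Sigma_m\subset K[t]$. Now, for all but finitely many places $v$ of the function field $K/E$, the polynomial $\varphi$ has good reduction at $v$, so $\varphi_v$ is again a normal-form polynomial of degree $m$ over $k_v$; moreover every $\sigma\in\Sigma_m$ has good reduction at each such $v$ (its coefficients are algebraic over $\bQ$, hence $v$-integral, because $v$ is trivial on $E\supseteq\bQ$), with reduction $\sigma_v$ the evident polynomial over $k_v$. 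By the first paragraph, $\varphi_v$ is then conjugate over $\overline{k_v}$ to $t^m$ or $D_m$ if and only if $\varphi_v=\sigma_v$ for some $\sigma\in\Sigma_m$.

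Finally, by hypothesis $\varphi$ is conjugate over $\overline{K}$ to neither $t^m$ nor $D_m$, so by the first paragraph $\varphi\ne\sigma$ in $K[t]$ for every $\sigma\in\Sigma_m$; for each such $\sigma$ this produces a coefficient at which the two differ, that is, a nonzero $\delta_\sigma\in K^\times$, which vanishes at only finitely many places of $K$. At any good-reduction place $v$ of $\varphi$ at which no $\delta_\sigma$ vanishes we get $\varphi_v\ne\sigma_v$ for every $\sigma\in\Sigma_m$, hence $\varphi_v$ is not conjugate to $t^m$ or $D_m$; since $\Sigma_m$ is finite, only finitely many $v$ are excluded. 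I do not expect a genuine obstacle here; the one point that needs care rather than ingenuity is the bookkeeping in the second paragraph --- that $\Sigma_m$ is truly independent of the residue field and compatible with reduction --- which rests only on $\car k_v=0$ and on $v$ being trivial on the algebraic numbers it contains. (It is worth noting that the statement really requires conjugacy over $\overline{K}$, not just over $K$: for example $t^5+5t^3+5t$ has constant coefficients, hence reduces to itself at every place, yet is $\oQ$-conjugate --- though not $\bQ$-conjugate --- to $D_5$, so a weaker hypothesis would not suffice.)
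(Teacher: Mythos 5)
Your argument is correct and follows essentially the same route as the paper's: after adjoining the $(m-1)$-st roots of unity, one observes that the only normal-form polynomials conjugate to $t^m$ or $D_m$ are $t^m$ and $\zeta^{-1}D_m(\zeta t)$ with $\zeta^{m-1}=1$, so $\varphi$, not being any of these finitely many fixed polynomials, can reduce to one of them at only finitely many places. Your extra bookkeeping (base change to $K(\zeta_{m-1})$, compatibility of $\Sigma_m$ with reduction, and the remark that conjugacy must be taken over $\overline{K}$) only makes explicit what the paper's proof uses implicitly.
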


\begin{proof}
After replacing $K$ by a finite extension, we may assume
that $K$ contains all $(m-1)$-st roots of unity.
All coefficients of $\varphi$ are $v$-adic integers
at all but finitely many places $v$.  For any such place,
write $k_v$ for the residue field and
$\varphi_v$ for the reduction of $\varphi$.
If $\varphi_v$ is conjugate
to the reduction $f_v$ of either $f=D_m$
or $f(t)=t^m$, write
$\varphi_v(t)=\mu^{-1}_v\circ f_v\circ\mu_v$
for some linear polynomial $\mu_v(t)=At+B\in k_v[t]$.
Because 
$\varphi_v$ and $f_v$ are both in normal form, we must have
$\mu_v(t)=\zeta_v t$, for some
$(m-1)$-st root of unity $\zeta_v\in k_v$.
(Indeed, because $\car K_v=\car E=0$ and both $\varphi_v$
and $f_v$ have trivial $t^{m-1}$ term, we must have $B=0$;
and because both are monic, $A$ must be an $(m-1)$-st root
of unity.)

Thus, at any such place $v$, $\varphi$
is congruent modulo $v$ to one of the $m$
polynomials
$\zeta^{-1}D_m(\zeta t)$ or
$\zeta^{-1}(\zeta t)^m=t^m$, where $\zeta\in K$
is an $(m-1)$-st root of unity.
Since $\varphi$ is not one of those $m$ polynomials
itself,
there are only finitely many such $v$ at which
that occurs.
\end{proof}

\begin{proposition}
\label{compositional powers}
Let $E$ be a field, and $K$ a function field of transcendence degree
$1$ over $E$. Let $f\in K[t]$ be an indecomposable polynomial of degree
greater than one. Then for all but finitely many places $v$ of $K$,
the reduction of $f$ modulo $v$ is also an indecomposable polynomial
over $\overline{k}_v$ of degree greater than one, where $k_v$ is the
residue field of $K$ at $v$.
\end{proposition}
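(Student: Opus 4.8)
The plan is to prove this by spreading out, exploiting the fact that ``decomposable of degree $m$'' is a constructible condition on the coefficient vector of a polynomial, defined over the prime ring, and hence behaves well under specialization.

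Let $m=\deg f$. If $m$ is prime there is nothing to do beyond noting that $f$ has good reduction — in particular reduction of degree exactly $m$, which is then automatically indecomposable — at all but finitely many $v$; so assume $m=ab$ for some $a,b\ge 2$. For each such factorization I would consider the composition morphism
\[
c_{a,b}\colon \bA^{a+1}\times\bA^{b+1}\longrightarrow\bA^{m+1},\qquad (g,h)\longmapsto g\circ h,
\]
defined over $\bZ$, where a point of $\bA^{a+1}$ (resp.\ $\bA^{b+1}$) is a polynomial of degree $\le a$ (resp.\ $\le b$). By Chevalley's theorem the image of $c_{a,b}$ is a constructible subset of $\bA^{m+1}$, and for any algebraically closed field $F$ its $F$-points are exactly the coefficient vectors of polynomials $g\circ h$ with $g,h\in F[t]$ of degrees $\le a$, $\le b$. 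Since $\deg(g\circ h)=\deg g\cdot\deg h$ as soon as $\deg h\ge1$, a degree-$m$ polynomial lies in this image precisely when it admits a genuine decomposition with factors of degrees $a$ and $b$. Taking the union over all factorizations $m=ab$ with $a,b\ge2$ yields a constructible set $\mathcal D\subseteq\bA^{m+1}$, defined over $\bZ$, such that a degree-$m$ polynomial over an algebraically closed field $F$ is decomposable (in the sense of Definition~\ref{def:indecomp}) if and only if its coefficient vector lies in $\mathcal D(F)$; set $\mathcal U:=\bA^{m+1}\setminus\mathcal D$, which is also constructible over $\bZ$.

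Since $f$ is indecomposable over $\overline{K}$, its coefficient vector $[f]\in\bA^{m+1}(K)$ lies in $\mathcal U$. I would then pass to a projective normal model $X$ of the function field $K$, whose closed points correspond to the places of $K$, and shrink to a nonempty open $U\subseteq X$ on which all coefficients of $f$ and the reciprocal of its leading coefficient are regular. These functions define a morphism $\Psi\colon U\to\bA^{m+1}$ carrying the generic point to $[f]$ and each closed point $x\in U$, corresponding to a place $v$, to the coefficient vector $[f_v]$ of the reduction $f_v$, which has degree exactly $m$ because the leading coefficient of $f$ is a $v$-adic unit. Now $\Psi^{-1}(\mathcal U)$ is a constructible subset of the irreducible curve $U$ that contains the generic point, hence contains a nonempty open $U'\subseteq U$, and $X\setminus U'$ is finite. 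For every place $v$ whose center lies in $U'$, the reduction $f_v\in k_v[t]$ has degree $m>1$ and coefficient vector $[f_v]=\Psi(x)\in\mathcal U(k_v)$; since $\mathcal U$ is defined over the prime ring, this forces $[f_v]\notin\mathcal D(\overline{k_v})$, i.e.\ $f_v$ is indecomposable over $\overline{k_v}$. As $X\setminus U'$ is finite, this holds for all but finitely many $v$.

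The step that requires care — more a matter of bookkeeping than genuine difficulty — is the compatibility of Chevalley's theorem with base change: one must know that the geometric points of $\mathcal D$ over \emph{every} residue field $\overline{k_v}$ are again exactly the images of geometric points of the sources $\bA^{a+1}\times\bA^{b+1}$, which is why it is essential to realize $\mathcal D$ as the image of a morphism defined over $\bZ$ rather than, say, over $K$. Granting that, the remaining inputs are entirely standard: the image of a finite-type morphism is constructible, preimages of constructible sets are constructible, and a constructible subset of an irreducible curve that contains the generic point contains a dense open subset. (Alternatively one could argue more by hand, normalizing a hypothetical decomposition so that its factors lie in a finite extension of $K$ with coefficients extending to algebraic functions on a cover of $X$, but the constructibility argument seems cleaner and avoids delicate normalizations.)
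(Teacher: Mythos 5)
Your argument is correct, but it proves the proposition by a genuinely different mechanism than the paper does. The paper fixes a factorization $\deg f=mn$ with $m,n\ge 2$, encodes the \emph{nonexistence} of a decomposition $f=g\circ h$ as the emptiness of a variety $X\subseteq\bA^{m+n+2}$ over $K$ cut out by the equations on the unknown coefficients of $g$ and $h$, and then argues by compactness: if the reduction $f_v$ were decomposable at infinitely many places $v$, the defining equations of $X$ would have a solution in the ultraproduct of the fields $\overline{k}_v$ over a non-principal ultrafilter; since $K$ embeds into that ultraproduct (by the result quoted from Hrushovski), $X$ would acquire a point over a field containing $K$ and hence an algebraic point over $K$, contradicting $X(\Kbar)=\emptyset$. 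You instead realize the decomposable locus as a constructible subset $\mathcal D$ of coefficient space via Chevalley's theorem applied to the composition morphism over $\bZ$, and spread out over a model of $K$ to conclude that the reduction stays in the complement of $\mathcal D$ on a dense open set of the curve. The two routes trade different standard inputs: yours avoids ultrafilters and the embedding theorem, is direct rather than by contradiction on an infinite set of places, and exhibits the good places as the closed points of an explicit dense open; the paper's ultraproduct argument avoids the bookkeeping you correctly flag (that the geometric points of the constructible image over each $\overline{k}_v$ are exactly the images of geometric points), at the cost of invoking the embedding of $K$ into the ultraproduct. Both arguments ultimately rest on the Nullstellensatz --- in your case to identify fibers of $c_{a,b}$ with actual decompositions over $\overline{k}_v$, in the paper's case to pass from a point of $X$ over a field containing $K$ to a point over $\Kbar$ --- and your reduction of the degree bookkeeping ($\deg g\le a$, $\deg h\le b$, $\deg(g\circ h)=m=ab$ forces degrees exactly $a$ and $b$) is handled correctly, as is the union over the finitely many factorizations, which mirrors the paper's reduction to a fixed pair $(m,n)$.
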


\begin{proof}
First we note that for all but finitely many places $v$ of $K$,
the coefficients of $f$ are integral at $v$, and
the leading coefficient of $f$ is a unit at $v$.
Thus, the reduction
$f_v$ of $f$ modulo $v$ is a polynomial of same degree as $f$.

We will show that for any given positive integers $m$ and $n$ (with $m,n\ge 2$) such that $mn=\deg(f)$, if $f$ is not a composition of a polynomial of degree $m$ with a polynomial of degree $n$, then for all but finitely many places $v$ of $K$, the reduction of $f$ modulo $v$ cannot be written as a composition of two polynomials of degrees $m$ and $n$, respectively, with coefficients in $\overline{k}_v$. Because there are finitely many pairs of positive integers $(m,n)$ such that $mn=\deg(f)$, our desired conclusion follows.

Let $m$ and $n$ be positive integers such that $mn=\deg(f)$ (with $m,n\ge 2$). Then the nonexistence of polynomials 
$$g(t)=\sum_{i=0}^m a_i t^i\text{ and }h(t)=\sum_{j=0}^n b_jt^j$$ 
with coefficients in $\Kbar$,
such that $f=g\circ h$, where
$f(t)=\sum_{\ell=0}^{\deg(f)} c_{\ell} t^{\ell}$,
translates to the statement that the variety $X\subset
\bA^{m+n+2}$ given by the equations which must be satisfied by
the $a_i$'s and the $b_j$'s has
no $\overline{K}$-points. Furthermore, $X$ is a variety
defined over a subring $R$ of $K$ such that all but finitely many
places of $K$ are maximal ideals of $R$.

Suppose there is an infinite set $\cS$ of places $v$ of $K$
at which
$f_v$ is actually a composition of two polynomials of degrees $m$
and $n$, with coefficients in $\overline{k}_v$. Then the special fibre
of $X$ over $v$ is nonempty over $\overline{k}_v$ for each $v\in\cS$. Therefore, the
equations defining $X$ determine a nonempty locus over the
ultraproduct $\cK_{\cS,\cU}$ of all the infinitely many fields
$\overline{k}_v$ with respect to a non-principal ultrafilter $\cU$
based on $\cS$. However, $K$ embeds into $\cK_{\cS,\cU}$ (see
\cite[p. 198--199]{Udi}).  Since $X$ is defined over $K$ and has
a rational point over a field containing $K$, it must in fact
have an algebraic point over $K$, giving
a contradiction to the fact
that $X(\Kbar)$ is empty.
\end{proof}

We are ready to prove Theorem~\ref{polynomials, plane curves over C}.
\begin{proof}[Proof of Theorem~\ref{polynomials, plane curves over C}.]
If $\varphi$ is a linear polynomial, then the result follows from \cite{Bell}.
If $\varphi$
is conjugate to $t^m$ or $D_m$,
then our conclusion follows from Proposition~\ref{twisted Laurent}.
We may therefore
assume that $\varphi$ is an indecomposable, nonlinear polynomial which
is neither a conjugate of $t^m$, nor of $D_m$.
Furthermore, after conjugating $\varphi$ by a linear
polynomial $\mu$ (and replacing $(x_0,y_0)$ by
$(\mu^{-1}(x_0),\mu^{-1}(y_0))$ and
$C$ by $(\mu^{-1},\mu^{-1})(C)$),
we may assume that $\varphi$ is in normal form.
Let $m=\deg\varphi$.

As before, we may assume that $C$ is an
irreducible curve, and that $C$ does not project to a
single point to any of the coordinates.  For example,
if $C=\bA^1\times \{y_1\}$,
then $y_1$ is $\varphi$-periodic, and hence $C$
is $\Phi$-periodic. In particular,
we may assume that neither $x_0$ nor $y_0$ is
$\varphi$-preperiodic.

Let $K$ be a finitely generated field over which $C$, $\varphi$, $x_0$
and $y_0$ are defined. Furthermore, at the expense of replacing $K$ by
a finite extension, we may assume
that $C$ is geometrically irreducible
and that $K$ contains all critical points of $\varphi$
and all $(m-1)$-st roots of unity.

We will prove Theorem~\ref{polynomials, plane curves over C} by
induction on $d:=\trdeg_{\bQ}K$. If $d=0$, then $K$ is a number field,
and our conclusion follows from Theorem~\ref{plane curves}.

Assume $d\ge 1$. Then $K$ may be viewed as the function field of a
smooth, geometrically irreducible curve $Z$ defined over a finitely
generated field $E$; thus, $\trdeg_{\bQ}E=d-1$. Moreover, the curve $C$
extends to a $1$-dimensional scheme over $Z$ (called $\cC$), all
but finitely many of whose fibres $\cC_{\gamma}$ are irreducible curves.

We claim that
there are infinitely many places $\gamma$ of $K$ for which all of the
following statements hold.  (By a place of $K$, we mean a valuation of
the function field $K/E$, cf. Chapter~2 of \cite{serre-book}.)
\begin{itemize}
\item[(a)] The fibre $\cC_{\gamma}$ is an irreducible curve defined
over the residue field $E(\gamma)$ of $\gamma$, of the same degree as
$C$.
\item[(b)] All nonzero coefficients of $\varphi$ are units at the
place $\gamma$; in particular, $\varphi$ has good reduction at $\gamma$,
and so we write $\varphi_{\gamma}$ and 
$\Phi_{\gamma}:=(\varphi_{\gamma},\varphi_{\gamma})$ for
the reductions of $\varphi$ and $\Phi$ at $\gamma$.
\item[(c)] The critical points of
$\varphi_{\gamma}$ are reductions at $\gamma$ of the critical points
of $\varphi$.
\item[(d)] For each critical point $z$ of $\varphi$ (other than
infinity), the reduction $z_{\gamma}$ is not a periodic point for
$\varphi_{\gamma}$.
\item[(e)] The map $\cO\lra\cO_{\gamma}$ from the
$\Phi$-orbit of $(x_0,y_0)$ to the $\Phi_{\gamma}$-orbit of
$\left(x_{0,\gamma},y_{0,\gamma}\right)$,
induced by reduction at $\gamma$, is injective.
\item[(f)] $\varphi_{\gamma}$ is not conjugate to $t^m$ or $D_m$.
  (Recall $m=\deg\varphi$.)
\item[(g)] $\varphi_{\gamma}$ is a nonlinear, indecomposable polynomial.
\end{itemize}
Conditions (a)--(c) above are satisfied at all but finitely many
places $\gamma$ of $K$.  The same is true of conditions~(f)--(g),
by Proposition~\ref{needed conjugate reduction}
and Proposition~\ref{compositional powers}.
Condition~(d) for preperiodic (but not periodic) critical
points also holds at all but finitely many places;
see the first paragraph of the proof of Lemma~\ref{sil}.
Meanwhile, \cite[Proposition $6.2$]{Mike} says that the
reduction of any finite set of nonpreperiodic points remains
nonpreperiodic at infinitely many places $\gamma$ (in fact, at
all $\gamma$ on $Z$ of sufficiently large Weil height).
Thus, conditions~(d)--(e) hold by applying
\cite[Proposition $6.2$]{Mike} to $(x_0,y_0)$ and the
nonpreperiodic critical points, proving the claim.

Let $\gamma$ be one of the infinitely many places satisfying
conditions (a)--(g) above.
From condition (e), we deduce
that $\cC_{\gamma}(E(\gamma))\cap\cO_{\gamma}$ is infinite.
Conditions
(c)--(d) guarantee that $\varphi_{\gamma}$ has no periodic critical
points (other than the exceptional point at infinity).
Because $E(\gamma)$ is
a finite extension of $E$, we get $\trdeg_{\bQ}E(\gamma)=d-1$.
By the inductive hypothesis, then, $\cC_{\gamma}$
is $\Phi_{\gamma}$-periodic. By conditions (f)--(g)
and Theorem~\ref{conj: periodic curves}, $\cC_{\gamma}$ is the
zero set of an equation from one of the four forms (i)--(iv) in
Theorem~\ref{conj: periodic curves}.
In fact,
if $\varphi$ has type $(a,b)$, then
the degree $d$ in Theorem~\ref{conj: periodic curves}
satisfies $d|b$ and $\gcd(d,a)=1$,
because condition~(b) implies that $\varphi_{\gamma}$
also has type $(a,b)$.
Thus,
for one of the four forms (i)--(iv), there are infinitely
many places $\gamma$ satisfying (a)--(g) above
such that the equation for
$\cC_{\gamma}$ is of that form.
By symmetry, it suffices to consider only forms
(i) and (iii).

\emph{Case 1.} Assume there are infinitely many $\gamma$ satisfying
(a)--(g) such that $\cC_{\gamma}$ is given by an equation
$x=x(\gamma)$, for some $\varphi_{\gamma}$-periodic point
$x(\gamma)\in E(\gamma)$. Then, since the degree of $C$ is preserved by the reduction at $\gamma$, we see that the degree of $C$ must be $1$. Thus, $C$ is defined by an equation of the form $ax+by+c=0$. Since there are infinitely many $\gamma$ such that the above equation reduces at $\gamma$ to $x=x(\gamma)$, we must have $b=0$; hence, the curve $C$ must be given by an equation $x=x_1$ for some $x_1\in K$, contradicting our assumption that $C$ does not project to a point in any of the coordinates.

\emph{Case 2.} Assume there are infinitely many $\gamma$ satisfying
(a)--(g) such that $\cC_{\gamma}$ is given by an equation
$y=\zeta\varphi_{\gamma}^r(x)$, for some $r\ge 0$ and some
$d$-th root of unity $\zeta$, where $d\mid b$ and $\gcd(d,a)=1$.
Because there are only finitely many $b$-th roots of unity,
we may assume $\zeta$ is the same for all of the infinitely
many $\gamma$.  Moreover,
because $\cC_{\gamma}$ has the same degree as $C$, the integer $r$ is
the same for all such $\gamma$.
Thus,
there are infinitely many
places $\gamma$ for which the polynomial equation for $C$ reduces
modulo $\gamma$ to $y-\zeta\varphi^r(x)$,
and hence the two polynomials are the same.  Thus, $C$ is the
zero set of the polynomial $y-\zeta\varphi^r(x)$.
Because $\varphi$ is of type $(a,b)$, it follows that
$C$ is $\Phi$-periodic.
\end{proof}

Arguing precisely as in the proof of Theorem~\ref{curves},
Theorem~\ref{polynomials, arbitrary curves over C} follows as a consequence
of Theorem~\ref{polynomials, plane curves over C}.

\begin{remark}
\label{possible extensions}
In personal communications, Medvedev and Scanlon told us that,
using the methods of \cite{Medvedev/Scanlon},
it is possible to prove the conclsion of 
Theorem~\ref{conj: periodic curves}
even for decomposable polynomials $f$ that are not
compositional powers of other polynomials.
Using that stronger result in our proofs above, we could
then extend
Theorems~\ref{polynomials, plane curves over C}
and~\ref{polynomials, arbitrary curves over C}
to any $f$ that is not a compositional power of another
polynomial.  It would then be easy to extend those
results to
{\em all} polynomials $f\in\bC[t]$ (with no periodic superattracting
points other than exceptional points);
indeed, if $f=g^k$ is a compositional power, then we may simply
replace the action of $f$ with the action of $g$.
\end{remark}

\def\cprime{$'$} \def\cprime{$'$} \def\cprime{$'$} \def\cprime{$'$}
\providecommand{\bysame}{\leavevmode\hbox to3em{\hrulefill}\thinspace}
\providecommand{\MR}{\relax\ifhmode\unskip\space\fi MR }
\providecommand{\MRhref}[2]{%
  \href{http://www.ams.org/mathscinet-getitem?mr=#1}{#2}
}
\providecommand{\href}[2]{#2}



\end{document}